\newtheorem{thm}{Theorem}[section]
\newtheorem{cor}[thm]{Corollary}
\newtheorem{lem}[thm]{Lemma}
\newtheorem{prop}[thm]{Proposition}
\newtheorem{claim}{Claim}
\newtheorem{obs}[thm]{Observation}
\begin{document}

\title{A Dirac-type theorem for uniform hypergraphs 
\thanks{The work was supported by National Nature Science Foundation of China (No. 11671376) and Anhui Initiative in Quantum Information Technologies (AHY150200).}
}
\author{Yue Ma$^a$, \quad Xinmin Hou$^b$,\quad Jun Gao$^c$\\
\small $^{a,b,c}$ Key Laboratory of Wu Wen-Tsun Mathematics\\
\small School of Mathematical Sciences\\
\small University of Science and Technology of China\\
\small Hefei, Anhui 230026, China.
}

\date{}

\maketitle

\begin{abstract}
Dirac (1952) proved that every connected graph of order $n>2k+1$ with minimum degree more than $k$ contains a path of length at least $2k+1$.
Erd\H{o}s and Gallai (1959) showed that every $n$-vertex graph $G$ with average degree more than $k-1$ contains a path of length $k$.
The hypergraph extension of the Erd\H{o}s-Gallai Theorem  have been  given by Gy\H{o}ri, Katona, Lemons~(2016) and Davoodi et al.~(2018).
F\"uredi, Kostochka, and Luo (2019) gave a connected version of the Erd\H{o}s-Gallai Theorem for hypergraphs. In this paper, we give a hypergraph extension of the  Dirac's Theorem:
Given positive integers $n,k$ and $r$, let $H$ be a connected $n$-vertex $r$-graph with no Berge path of length $2k+1$. We show that
(1) If $k> r\ge 4$ and $n>2k+1$, then $\delta_1(H)\le\binom{k}{r-1}$. Furthermore, the equality holds if and only if  $S'_r(n,k)\subseteq H\subseteq S_r(n,k)$ or $H\cong S(sK_{k+1}^{(r)},1)$;
(2) If $k\ge r\ge 2$ and $n>2k(r-1)$, then $\delta_1(H)\le \binom{k}{r-1}$.
The result is also a Dirac-type version of the result of F\"uredi, Kostochka, and Luo.
As an application of (1), we give a better lower bound of the minimum degree than the ones  in the Dirac-type results for Berge Hamiltonian cycle given by  Bermond et al.~(1976) and Clemens et al. (2016), respectively.
\end{abstract}

\section{Introduction}
An $r$-uniform hypergraph, or $r$-graph, is a pair $H=(V,E)$, where $V$ is a set of elements called vertices, and $E$ is a collection of subsets of $V$ with uniform size $r$ called edges.
In this article, all $r$-graphs $H$ considered are simple, i.e. $H$ contains no multiple edges.
We call $|V|$ {\it the order} of $H$ and $|E|$ {\it the size} of $H$, also denoted by $|H|$ or $e(H)$.  We write graph for $2$-graph for short.
Given $S\subseteq V(H)$, the {\it degree} of $S$, denote by $d_H(S)$, is the number of edges of $H$ containing $S$. The minimum $s$-degree $\delta_s(H)$ of $H$ is the minimum of $d_H(S)$ over all $S\subseteq V(H)$ of size $s$.
We call $\delta_1(H)$ the \emph{minimum degree} of $H$, that is $\delta_1(H)=\min\{d_H(v) : v\in V(H)\}$. Let $N_H(S)=\{ T : S\cup T\in E(H)\}$.
 Given two integers $a,b$ with $a<b$, write $[a,b]$ for the set $\{a, a+1, \ldots, b\}$.

The following two theorems, due to Dirac~\cite{Dirac52} and Erd\H{o}s and Gallai~\cite{EG59}, are well-known  in  graph theory.

\begin{thm}[Dirac, 1952]\label{THM: Dirac}
Let $G$ be a connected graph on $n$ vertices with minimum degree $\delta_1(G)>k$. If $n>2k+1$, then $G$ contains a path of length at least $2k+1$.
\end{thm}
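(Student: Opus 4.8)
The plan is to argue by contradiction using the classical rotation--extension technique applied to a longest path. Let $P = v_0 v_1 \cdots v_\ell$ be a longest path in $G$ and suppose, toward a contradiction, that $\ell \le 2k$. My first observation is that every neighbor of an endpoint of $P$ must already lie on $P$: if $v_0$ had a neighbor $w \notin V(P)$, then $w v_0 v_1 \cdots v_\ell$ would be a strictly longer path, and symmetrically for $v_\ell$. Hence the neighborhoods of $v_0$ and $v_\ell$ are contained in $V(P)$, and since $\delta_1(G) > k$ we have $|N(v_0)|, |N(v_\ell)| \ge k+1$.

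Next I would produce a cycle through all $\ell + 1$ vertices of $P$. Put $A = \{i : v_0 v_i \in E(G)\}$ and $B = \{j+1 : v_\ell v_j \in E(G)\}$; by the previous paragraph both are subsets of $\{1, \dots, \ell\}$ with $|A|, |B| \ge k+1$. Since
\[
|A| + |B| \ge 2k + 2 > \ell ,
\]
the two sets meet, so we may pick $i \in A \cap B$ with $v_0 v_i \in E(G)$ and $v_\ell v_{i-1} \in E(G)$. Then
\[
v_0 \, v_i \, v_{i+1} \cdots v_\ell \, v_{i-1} \, v_{i-2} \cdots v_1 \, v_0
\]
is a cycle $C$ spanning $V(P)$.

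Finally I would invoke connectivity together with the hypothesis $n > 2k+1$. Since $|V(C)| = \ell + 1 \le 2k+1 < n$, there is a vertex outside $C$, and connectivity lets me choose such a vertex $u \notin V(C)$ adjacent to some vertex of $C$. Cutting $C$ open at that vertex gives a Hamilton path of $C$ covering all $\ell + 1$ vertices; prepending $u$ yields a path on $\ell + 2$ vertices, of length $\ell + 1 > \ell$, contradicting the maximality of $P$. Therefore $\ell \ge 2k+1$, as required.

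I expect the cycle-construction step to be the main obstacle. One must check that the crossing edges $v_0 v_i$ and $v_\ell v_{i-1}$ really do recombine the two subpaths of $P$ into a single spanning cycle (rather than two shorter cycles), and keep the indexing consistent so that the counting $|A| + |B| > \ell$ legitimately forces $A \cap B \ne \emptyset$; in fact the bound yields $|A \cap B| \ge 2$, which lets me avoid the degenerate case $i = 1$ (where $v_{i-1} = v_0$). The endpoint-neighbor observation and the final extension step are routine once this combinatorial core is secured.
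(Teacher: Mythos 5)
Your proof is correct. Note first that the paper does not prove this statement at all: Theorem~\ref{THM: Dirac} is quoted as classical background and attributed to Dirac's 1952 paper, so there is no in-paper proof to compare against. Your argument is the standard rotation/crossing-edge proof and it is sound: neighbors of the endpoints of a longest path $P$ lie on $P$; the sets $A=\{i : v_0v_i\in E\}$ and $B=\{j+1 : v_\ell v_j\in E\}$ sit inside $\{1,\dots,\ell\}$ with $|A|+|B|\ge 2k+2>\ell$, so they intersect and yield a cycle spanning $V(P)$; then connectivity plus $n>2k+1\ge \ell+1$ produces a vertex outside the cycle adjacent to it, and opening the cycle at that attachment point gives a longer path, a contradiction. (Your worry about $i=1$ is harmless either way: in that case $v_\ell v_0\in E$ and the cycle $v_0v_1\cdots v_\ell v_0$ still spans $V(P)$, so the degenerate case needs no special treatment.) It is worth observing that this is exactly the template the paper later lifts to hypergraphs: its Lemma~\ref{LEM: cycle1} plays the role of your final extension step (a spanning cycle plus connectivity forces $n=c(H)$), and Proposition~\ref{PROP: set}(1.1)--(1.2) is the Berge-path analogue of your disjointness/crossing analysis of $A$ and $B$; the hypergraph setting merely forces the extra bookkeeping of distinguishing key vertices from edge vertices and keeping the witnessing edges distinct.
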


\begin{thm}[Erd\H{o}s-Gallai Theorem, 1959]\label{THM: EGT}
Let $G$ be a graph on $n$ vertices with $e(G)>\frac{(k-1)n}2$ (or $e(G)>\frac{(k-1)(n-1)}2$). Then  $G$ contains a path of length $k$ ( or a cycle of length at least $k$).
\end{thm}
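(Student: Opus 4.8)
The plan is to prove both implications of Theorem~\ref{THM: EGT} simultaneously by induction on $n$, in each case reducing to a connected (respectively $2$-connected) host graph and then deleting a vertex of small degree. Write $P_{k+1}$ for a path with $k$ edges. For the path statement, assume $e(G)>\frac{(k-1)n}{2}$; since $\binom{n}{2}\ge e(G)>\frac{(k-1)n}{2}$ forces $n\ge k+1$, a $P_{k+1}$ has room to appear. If $G$ is disconnected, averaging $\sum_i e(C_i)>\frac{k-1}{2}\sum_i|C_i|$ over the components $C_i$ shows some component already exceeds the density threshold on fewer vertices, and the induction hypothesis applies there; so I may assume $G$ is connected. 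I then split on the minimum degree: if $\delta_1(G)\le\frac{k-1}{2}$, I delete a minimum-degree vertex $v$, and since $e(G-v)=e(G)-d(v)>\frac{(k-1)n}{2}-\frac{k-1}{2}=\frac{(k-1)(n-1)}{2}$, the induction hypothesis gives a $P_{k+1}$ inside $G-v\subseteq G$.

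The interesting case is $\delta_1(G)\ge\frac{k}{2}$, where deletion no longer preserves the density bound; this is the crux for paths. Here I argue directly on a longest path $P=v_0v_1\cdots v_\ell$. Maximality forces every neighbor of $v_0$ and of $v_\ell$ to lie on $P$; let $A\subseteq\{1,\dots,\ell\}$ and $B\subseteq\{0,\dots,\ell-1\}$ be the index sets of these neighbors, so $|A|,|B|\ge k/2$. Suppose for contradiction that $\ell\le k-1$. Then $A$ and $B+1:=\{j+1:j\in B\}$ both lie in $\{1,\dots,\ell\}$ with $|A|+|B+1|\ge k>\ell$, so they meet at some index $i$; hence $v_0v_iv_{i+1}\cdots v_\ell v_{i-1}v_{i-2}\cdots v_0$ is a cycle $C$ through all of $V(P)$. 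Since $|V(C)|=\ell+1\le k<n$, connectivity provides an edge leaving $V(C)$, which combines with $C$ to yield a path strictly longer than $P$, contradicting maximality. Therefore $\ell\ge k$ and $G$ contains $P_{k+1}$.

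For the cycle statement, assume $e(G)>\frac{(k-1)(n-1)}{2}$, which again gives $n\ge k$. I run the same induction, now reducing to a $2$-connected graph: passing to a component and then, via the block decomposition (using $\sum_B(|V(B)|-1)=n-1$ and $e(G)=\sum_B e(B)$), to a single block $B$ with $e(B)>\frac{(k-1)(|V(B)|-1)}{2}$; if $B\ne G$ the induction hypothesis finishes, so I may assume $G$ is $2$-connected. If $\delta_1(G)\le\frac{k-1}{2}$, deleting a minimum-degree vertex keeps $e(G-v)>\frac{(k-1)(n-1)}{2}-\frac{k-1}{2}=\frac{(k-1)(n-2)}{2}$, and the induction hypothesis supplies a cycle of length at least $k$. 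In the remaining dense case $\delta_1(G)\ge\frac{k}{2}$, I would invoke Dirac's companion bound from the same 1952 paper, namely that the circumference $c(G)$ of a $2$-connected graph satisfies $c(G)\ge\min\{n,2\delta_1(G)\}\ge\min\{n,k\}=k$, which finishes the argument.

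The main obstacle in both parts is precisely the dense regime $\delta_1\approx k/2$, where vertex deletion fails to preserve the hypothesis. For paths this is resolved cleanly by the crossing/pigeonhole argument on a longest path given above. For cycles the analogous strengthening from the trivial $\delta_1+1$ bound (immediate from the highest-indexed endpoint neighbor on a longest path) to the required $2\delta_1$ bound is exactly the content of Dirac's circumference theorem, whose proof uses the rotation/ear-exchange (P\'osa-type) technique applied to a longest cycle of a $2$-connected graph. If a self-contained treatment is preferred, I would reprove that $2$-connected circumference bound in place, since it is the only external ingredient I rely on.
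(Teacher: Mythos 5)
The paper never proves this statement: Theorem~\ref{THM: EGT} is quoted as classical background, with a citation to Erd\H{o}s--Gallai~\cite{EG59}, and nothing in the paper's later arguments depends on an internal proof of it. So there is no in-paper proof to compare against; what you have written is the standard textbook proof, and it is essentially correct. Your path half is sound: the vacuous base case for $n\le k$, the component reduction, the deletion of a vertex of degree at most $\frac{k-1}{2}$, and the crux case $\delta_1(G)\ge k/2$ handled by the crossing argument ($A$ and $B+1$ inside $[1,\ell]$ with $|A|+|B+1|\ge k>\ell$, producing a cycle through $V(P)$ and then a longer path via connectivity, since $|V(C)|\le k<n$) all check out, including the degenerate indices $i=1$ and $i=\ell$. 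Your cycle half is also structurally correct: the block decomposition identities $\sum_B\bigl(|V(B)|-1\bigr)=n-1$ and $e(G)=\sum_B e(B)$ legitimately reduce to a single dense block, deletion works in the sparse regime, and Dirac's circumference bound $c(G)\ge\min\{n,2\delta_1(G)\}$ for $2$-connected graphs closes the dense regime once $n\ge k$ is extracted from the hypothesis.

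Two caveats you should make explicit. First, the cycle assertion as quoted in the paper is false for $k=2$ (a path on $n$ vertices has $n-1>\frac{(k-1)(n-1)}{2}$ edges and no cycle); the classical statement assumes $k\ge 3$, and this is exactly where your block reduction would leak, since a $K_2$ block can exceed the threshold $\frac{(k-1)\cdot 1}{2}$ only when $k\le 2$, so you should state $k\ge 3$ and note that the selected block then has at least three vertices and is genuinely $2$-connected. Second, as you yourself flag, the cycle half is not self-contained: it outsources the dense case to Dirac's $2$-connected circumference theorem, which is a nontrivial external ingredient (provable by the rotation technique you mention), whereas your path half is fully self-contained. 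With the $k\ge 3$ hypothesis added and that dependency either reproved or cited, the argument is complete.
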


The type of problems that relate the ($d$-)minimum degree (resp. the number of edges) in (hyper)graphs to the structure of the (hyper)graphs are often referred to as Dirac-type (resp. Tur\'an-type) problems. These types of problems for hypergraphs have received much attention in recent years, see~\cite{Kee11,RR10,Zh16} for the surveys.

A {\it Berge path} $P$ of length $t$ in a hypergraph is a collection of $t+1$ distinct vertices $\{v_0,v_1,...,v_{t}\}$ and $t$ distinct edges $\{e_1,e_2,...,e_t\}$  such that $\{v_{i-1},v_i\} \subseteq e_i$ for $1\le i\le t$.
A {\it Berge cycle} $C$ of length $t$ in a hypergraph is a collection of $t$ distinct edges $\{e_1,e_2,...,e_t\}$ and $t$ distinct vertices $\{v_1,...,v_{t}\}$ such that $\{v_{i-1},v_i\} \subseteq e_i$ with indices taken modulo $t$. We call $\{v_0,...,v_t\}$ (resp. $\{v_1,...,v_t\}$) the key vertices of $P$ (resp. $C$), denoted by $K(P)$ (resp. $K(C)$), and $P$ is called a Berge path connecting $v_0$ and $v_t$.
An $r$-graph $H$ is called connected if for any two vertices $u,v\in V(H)$, there exists a Berge path $P$ connecting $u$ and $v$.

The hypergraph extension of the Erd\H{o}s-Gallai Theorem (Theorem~\ref{THM: EGT}) have been  solved completely in two recent papers by Gy\H{o}ri, Katona, Lemons~\cite{GKL16} and Davoodi et al.~\cite{DGMT18}.
\begin{thm}[Theorem 1.3 in~\cite{GKL16} and Theorem 3 in~\cite{DGMT18}]\label{THM: Gyori&Davoodi}
Let $H$ be an $n$-vertex $r$-graph with no Berge path of length $k$. If $r>k>3$, then $e(H)\le\frac{(k-1)n}{r+1}$.
If $k\ge r+1>3$ then $e(H)\le \frac nk{k\choose r}$. Furthermore, these bounds are sharp for each $k$ and $r$ for infinitely many $n$.
\end{thm}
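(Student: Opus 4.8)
The plan is to prove the two upper bounds by induction on $n$, modelled on the classical proof of the graph Erd\H{o}s--Gallai theorem (Theorem~\ref{THM: EGT}) through a longest path, and to certify sharpness by exhibiting the natural extremal hypergraphs. For the range $k\ge r+1$ the extremal example is a disjoint union of $\lfloor n/k\rfloor$ copies of the complete $r$-graph $K_k^{(r)}$: each copy spans only $k$ vertices, so it cannot contain $k+1$ distinct key vertices and hence has no Berge path of length $k$, while carrying $\binom{k}{r}$ edges, giving exactly $\frac{n}{k}\binom{k}{r}$ edges when $k\mid n$. For the range $r>k$ the extremal example is a disjoint union of $\lfloor n/(r+1)\rfloor$ blocks, each on $r+1$ vertices carrying any $k-1$ of its (at most $r+1$) edges, which is possible since $k-1\le r$; such a block has only $k-1$ edges and therefore no Berge path of length $k$, giving $\frac{(k-1)n}{r+1}$ edges. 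These two different extremal shapes already force the two ranges to be treated separately.

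For the upper bound I would argue by induction on $n$, reducing to the connected case by summing over components. Let $P=v_0,e_1,v_1,\dots,e_\ell,v_\ell$ be a longest Berge path, so $\ell\le k-1$ and $K(P)=\{v_0,\dots,v_\ell\}$ has at most $k$ vertices. Maximality of $P$ forces every edge meeting an endpoint $v_0$ (other than $e_1$) to have all of its potential ``connecting'' vertices inside $V(P)$, since otherwise $P$ could be extended. As in the graph case I would then run a rotation argument on the key vertices: a ``crossing'' pair of edges joining $v_0$ to some $v_i$ and $v_{i-1}$ to $v_\ell$ produces a Berge cycle through all of $K(P)$, and, since $H$ is connected, any such cycle that does not already exhaust its component can be extended through an adjacent edge into a Berge path longer than $P$, a contradiction. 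In the absence of a crossing, the admissible connections at the two endpoints are non-crossing, which bounds how many edges can be anchored at the endpoint block.

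The inductive step then removes a bounded \emph{block} rather than a single vertex: in the range $k\ge r+1$ a set of at most $k$ key vertices carrying at most $\binom{k}{r}$ edges, and in the range $r>k$ a set of at most $r+1$ vertices carrying at most $k-1$ edges. Deleting such a block leaves a hypergraph on fewer vertices that still has no Berge path of length $k$, so the inductive hypothesis applies; and the per-vertex densities $\frac1k\binom{k}{r}$ (respectively $\frac{k-1}{r+1}$) of the block match the target densities, closing the induction.

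The main obstacle is that, unlike in graphs, deleting only the two endpoints of a longest path does not suffice: the extremal hypergraphs are (nearly) regular with large minimum degree, so a light vertex need not exist and the naive ``delete a light vertex or force a long path'' dichotomy breaks down. The real work is therefore to show that whenever no light vertex is present the longest-path and rotation analysis isolates a genuinely bounded block whose edge count is controlled. Two features make this delicate in the Berge setting: each edge carries $r-2$ ``passenger'' vertices beyond its consecutive key pair, so the rotation and cycle-building steps must track which vertices are used as key vertices versus merely available, and one must rule out edges that share these passenger vertices in ways that would inflate the block's edge count or let it stick out of the block. Pinning down the exact thresholds $\frac1k\binom{k}{r}$ and $\frac{k-1}{r+1}$, and verifying that the constructions above are genuinely extremal, is where the casework between the two ranges concentrates.
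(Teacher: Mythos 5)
First, a point of comparison: the paper you are being checked against does not prove this statement at all. Theorem~\ref{THM: Gyori&Davoodi} is quoted from \cite{GKL16} and \cite{DGMT18} as background, so your proposal can only be judged on its own merits, not against an in-paper argument.

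Your sharpness constructions are correct and complete: for $k\ge r+1$, disjoint copies of $K_k^{(r)}$ work because a Berge path of length $k$ needs $k+1$ distinct key vertices; for $r>k$, disjoint blocks on $r+1$ vertices carrying $k-1$ edges work because a Berge path of length $k$ needs $k$ distinct edges. These attain the two bounds exactly whenever $k$ (resp. $r+1$) divides $n$, which covers ``infinitely many $n$.''

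The upper bound, however, has a genuine gap, and you effectively concede it yourself: the decisive inductive step --- producing a vertex set $B$ that can be deleted --- is deferred rather than carried out, and it is precisely where all the difficulty sits. Three concrete problems. (i) Accounting: writing $e(H)=e(H-B)+\#\{e\in E(H): e\cap B\neq\emptyset\}$, the induction charges to $B$ \emph{every} edge meeting $B$, not just edges inside $B$; your rotation analysis only controls edges incident to the two endpoints of a longest Berge path and says nothing about edges that merely pass through $B$ via passenger vertices. (ii) Scaling: even granting a block of ``at most $k$ vertices and at most $\binom{k}{r}$ edges,'' the inequality $\frac{n-|B|}{k}\binom{k}{r}+\binom{k}{r}\le\frac{n}{k}\binom{k}{r}$ requires $|B|\ge k$, so the induction closes only if $|B|$ is \emph{exactly} $k$ (resp. exactly $r+1$); when the longest path in a component is shorter than $k-1$, your stated block bounds do not suffice and a separate argument is needed. (iii) Most seriously, in the range $r>k$ the block cannot even be defined from the longest path: that path has at most $k\le r-1$ key vertices, while every single edge already contains $r$ vertices, so the key-vertex set cannot serve as the $(r+1)$-vertex block, and the proposal gives no mechanism for selecting which $r+1$ vertices to delete or why at most $k-1$ edges meet them. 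The published proofs do not follow the graph Erd\H{o}s--Gallai template in this uniform way --- the two ranges require genuinely different arguments, and the borderline case $k=r+1$ needed a separate later paper \cite{DGMT18} --- which is a strong signal that the missing step is not a routine adaptation. As it stands, your proposal is a correct description of the extremal examples plus a plan whose core is absent.
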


We should mention that the extremal hypergraphs are disconnected in Theorem~\ref{THM: Gyori&Davoodi}. The connected version of the Erd\H{os}-Gallai Theorem have also received much attention for graphs (see~\cite{BGJS08,Kop77,Luo18}) and hypergraphs (see \cite{EGMSTZ18,FKL19-1,FKL18,FKL19-2,GMSTV18}). Most of them focus on the size of a largest $r$-uniform connected $n$-vertex hypergraph with no Berge cycle of length at least $k$, among them, the best known result for the size of a largest  connected $n$-vertex $r$-graph with no Berge path of length $k$ was given by F\"uredi, Kostochka, and Luo~\cite{FKL19-2}.
We first define some constructions of extremal hypergraphs.
Let $S_r(n,k)$ be the $r$-graph on vertex set $A\cup B$ with $|A|=k$ and $|B|=n-k$, and edge set
$$E=\{e:e\subset A\cup B \mbox{ with } |e|=r \mbox{ and } |e\cap B|\le 1\}\mbox{;}$$
let  $S'_r(n,k)$ be the $r$-graph obtained from $S_r(n,k)$ by removing all the edges contained in $A$, i.e.
$$E(S'_r(n,k))=\{e:e\subset A\cup B \mbox{ with } |e|=r \mbox{ , } |e\cap B|=1\}.$$
Let $K_{k+1}^r$ be the complete $r$-graph on $k+1$ vertices and let $S(sK_{k+1}^{r},1)$ be the $r$-graph on $sk+1$ vertices consisting of $s$ copies of $K_{k+1}^r$ that intersect in exactly one common vertex called the center of $S(sK_{k+1}^{r},1)$.
Let $H_1$ and $H_2$ be two hypergraphs. We write $H_1\subseteq H_2$ for $H_1$ is a subgraph of $H_2$ and write $H_1\cong H_2$ for $H_1$ is isomorphic to $H_2$.
From the definitions, one can directly check  that the following proposition holds.
\begin{prop}\label{PROP: Extremal}
Suppose $k\ge r\ge 2$, $s\ge 2$, and $n\ge 2k+1$. Then $$\delta_1(S'_r(n,k))=\delta_1(S_r(n,k))=\delta_1(S(sK_{k+1}^r, 1))=\binom{k}{r-1},$$
$$e(S_r(n,k))=e(S'_r(n,k))+{k\choose r}=(n-k){k\choose r-1}+{k\choose r}, \,\, e(sK_{k+1}^r,1)=s{k+1\choose r},$$
and
a longest Berge path in $S'_r(n,k)$, $S_r(n,k)$ and  $S(sK_{k+1}^{r},1)$ has length $2k$.
\end{prop}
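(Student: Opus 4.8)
The plan is to verify each of the three groups of assertions — the minimum degrees, the sizes, and the lengths of longest Berge paths — by direct computation on each of the three constructions, exploiting the structural constraint that an edge of $S_r(n,k)$ or $S'_r(n,k)$ meets $B$ in at most one vertex and that distinct copies of $K_{k+1}^r$ in $S(sK_{k+1}^r,1)$ share only the center.

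First I would compute degrees. In $S'_r(n,k)$ every edge uses exactly one vertex of $B$ and $r-1$ vertices of $A$, so a vertex $u\in B$ has degree $\binom{k}{r-1}$ (choosing its $r-1$ partners in $A$), while a vertex $v\in A$ has degree $(n-k)\binom{k-1}{r-2}$ (choosing the $B$-vertex and the remaining $r-2$ vertices of $A$). Using the identity $\binom{k}{r-1}=\tfrac{k}{r-1}\binom{k-1}{r-2}$ together with $n-k>k+1>\tfrac{k}{r-1}$, the $A$-degree dominates, so $\delta_1(S'_r(n,k))=\binom{k}{r-1}$, attained on $B$. Passing to $S_r(n,k)$ only adds edges inside $A$, which raises $A$-degrees and leaves $B$-degrees unchanged, so the minimum is still $\binom{k}{r-1}$. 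In $S(sK_{k+1}^r,1)$ a non-center vertex lies in one copy and has degree $\binom{k}{r-1}$, while the center lies in all $s$ copies with pairwise-disjoint edge sets and has degree $s\binom{k}{r-1}>\binom{k}{r-1}$; hence the minimum is again $\binom{k}{r-1}$. The size formulas then follow by counting: $e(S'_r(n,k))=(n-k)\binom{k}{r-1}$ since each edge is determined by its unique $B$-vertex and its $A$-part; $S_r(n,k)$ adds exactly the $\binom{k}{r}$ edges inside $A$; and $S(sK_{k+1}^r,1)$ has $s\binom{k+1}{r}$ edges because edges in different copies, meeting in at most the single vertex $c$, cannot coincide when $r\ge2$.

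For the longest-path bounds I would first establish the upper bound $2k$. In $S_r(n,k)$ (and hence in its subgraph $S'_r(n,k)$) each Berge edge contains at most one vertex of $B$, so in the key-vertex sequence $v_0,\dots,v_t$ no two consecutive vertices lie in $B$; writing $a$ and $b$ for the numbers of key vertices in $A$ and $B$ gives $b\le a+1$ and $a\le|A|=k$, whence $t+1=a+b\le 2k+1$ and $t\le 2k$. For $S(sK_{k+1}^r,1)$ every edge lies in a unique copy, so along a Berge path the copy can change only at a key vertex belonging to two copies, i.e.\ at the center $c$; since $c$ occurs at most once, the path meets at most two copies, and each copy contributes at most $k+1$ key vertices, giving at most $(k+1)+(k+1)-1=2k+1$ key vertices and length at most $2k$.

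The main work, and the part needing the most care, is exhibiting Berge paths of length exactly $2k$. For $S'_r(n,k)$ I would take $A=\{a_1,\dots,a_k\}$ and distinct $B$-vertices $b_1,\dots,b_{k+1}$ (available since $n-k>k+1$) and build the alternating key-vertex sequence $b_1,a_1,b_2,a_2,\dots,a_k,b_{k+1}$. Each consecutive pair is covered by an edge consisting of its $B$-vertex together with an $(r-1)$-subset of $A$ containing the relevant $a_i$; the only edges sharing a $B$-vertex are the consecutive pair at each $b_{i+1}$, and these can be made distinct because, with $k\ge r$, one may choose the $A$-part of the ``descending'' edge at $b_{i+1}$ to contain $a_{i+1}$ but avoid $a_i$ (possible as $|A\setminus\{a_i,a_{i+1}\}|=k-2\ge r-2$). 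This yields $2k$ distinct edges and a Berge path of length $2k$, which also lies in $S_r(n,k)$. For $S(sK_{k+1}^r,1)$ I would take a spanning Berge path of length $k$ inside one copy that ends at $c$ (using all $k+1$ vertices and $k$ distinct edges, which exist since $k\ge r$), then concatenate at $c$ with a spanning length-$k$ Berge path of a second copy; the two copies share only $c$ and no edges, so the result is a Berge path of length $2k$. The only delicate point throughout is this selection of pairwise-distinct Berge edges, and in each case the slack provided by $k\ge r$ (and $n>2k+1$) makes it routine.
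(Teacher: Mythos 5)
Your proof is correct. There is nothing in the paper to compare it against line by line: the authors state this proposition with only the remark that ``from the definitions, one can directly check'' it, so your writeup supplies exactly the verification they leave to the reader. The degree and size computations are the routine part, and you do them correctly (the upper bounds $\ell\le 2k$ via ``no two consecutive key vertices in $B$'' and ``the copy can change only at the center'' are the standard arguments). The genuinely non-trivial content is producing $2k$ pairwise \emph{distinct} Berge edges for the lower bound, and your device for $S'_r(n,k)$ --- making the two edges that share the $B$-vertex $b_{i+1}$ differ by forcing one of them to avoid $a_i$, which is possible since $|A\setminus\{a_i,a_{i+1}\}|=k-2\ge r-2$ --- is exactly the right point to isolate. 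Two trivial slips worth fixing. First, the hypothesis is $n\ge 2k+1$, so you only get $n-k\ge k+1$, not $n-k>k+1$; the weak inequality is all you actually use, both for $(n-k)\binom{k-1}{r-2}\ge\binom{k}{r-1}$ and for finding $k+1$ distinct vertices of $B$. Second, the spanning Berge path of $K_{k+1}^r$ ending at the center, which you assert exists ``since $k\ge r$,'' deserves one explicit line: order the vertices as $u_0,\dots,u_k$ with $u_k=c$ and take $e_i=\{u_{i-1},u_i,\dots,u_{i+r-2}\}$ with indices modulo $k+1$; since $r\le k$ these cyclic intervals are pairwise distinct, and $e_i\supseteq\{u_{i-1},u_i\}$, so the required path exists for any prescribed vertex order.
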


\begin{thm}[Theorem 18 in~\cite{FKL19-2}]\label{THM: FKL19-2}
 Let $k>4r>12$ and suppose $n$ is larger than a proper function of $k$ and $r$. If $H$ is an $n$-vertex connected $r$-graph with no Berge path of length $k$, then $$e(H)\le \left(n-\left\lceil\frac{k+1}2\right\rceil\right){\lfloor\frac{k-1}2\rfloor\choose r-1}+{\lceil\frac{k+1}2\rceil\choose r}.$$
 Furthermore, the $S_r(n, \lfloor\frac{k-1}2\rfloor)$ is an extremal hypergraph.
\end{thm}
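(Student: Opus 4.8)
\section*{Proof proposal for Theorem~\ref{THM: FKL19-2}}

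The goal is to maximise $e(H)$ over connected $n$-vertex $r$-graphs $H$ with no Berge path of length $k$, and to show the optimum is (essentially) attained by $S_r(n,\lfloor\frac{k-1}{2}\rfloor)$. The plan is to isolate a small ``dense core'' of roughly $k/2$ vertices that absorbs almost all edges, and to bound everything else crudely. First I would fix a longest Berge path $P=v_0v_1\cdots v_s$, with key vertices $K(P)$ and defining edges $f_1,\dots,f_s$; since $H$ has no Berge path of length $k$ we have $s\le k-1$, so $|K(P)|\le k$. The overall target is to prove that all but a negligible number of edges meet a fixed subset $A\subseteq K(P)$ with $|A|\approx\lfloor\frac{k-1}{2}\rfloor$ in at least $r-1$ vertices, which immediately produces the two terms $\binom{|A|}{r}$ and $(n-|A|)\binom{|A|}{r-1}$ of the claimed bound.

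To obtain this structure I would first reduce to the $2$-connected case. Decomposing $H$ along cut vertices into its blocks, connectivity arranges the blocks in a tree; a charging argument (every non-cut vertex lies in a single block, and the cut vertices form a small transversal) shows that an edge-maximal configuration concentrates its core inside one dominant block while all other blocks are forced to be small, so the extremum is governed by the $2$-connected bound --- note that $S_r(n,\lfloor\frac{k-1}{2}\rfloor)$ is itself $2$-connected once $|A|=\lfloor\frac{k-1}{2}\rfloor\ge r$, which holds since $k>4r$. For a single $2$-connected piece I would run the Berge analogue of the Erd\H{o}s--Gallai/Kopylov rotation argument on $P$: rotating an endpoint along $P$ yields a large family of longest Berge paths, and maximality (no extension to length $k$) drives the endpoints' neighbourhoods into $K(P)$. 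Iterating the rotations pins down a set $A$ of high-degree core vertices of size about $\lfloor\frac{k-1}{2}\rfloor$ whose deletion leaves only short Berge paths --- exactly the defining feature of $S_r(n,\lfloor\frac{k-1}{2}\rfloor)$, whose longest Berge path has length $2\lfloor\frac{k-1}{2}\rfloor<k$ by Proposition~\ref{PROP: Extremal}.

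With $A$ fixed I would finish by counting. Edges inside $A$ number at most $\binom{|A|}{r}$; edges meeting $B=V(H)\setminus A$ in exactly one vertex number at most $(n-|A|)\binom{|A|}{r-1}$; and I would show that more than a bounded number of edges meeting $B$ in two or more vertices must create a Berge path of length $k$, a contradiction. Optimising $\binom{m}{r}+(n-m)\binom{m}{r-1}$ over admissible core sizes $m$, subject to the constraint $2m<k$ that keeps the core path-free, selects $m=\lfloor\frac{k-1}{2}\rfloor$, and the Pascal identity $\binom{m+1}{r}-\binom{m}{r}=\binom{m}{r-1}$ converts this into the stated closed form with the $\lceil\frac{k+1}{2}\rceil$ and $\lfloor\frac{k-1}{2}\rfloor$ terms. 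The hypotheses $k>4r>12$ and $n$ larger than a function of $k,r$ enter precisely here, to guarantee that the linear-in-$n$ term $(n-m)\binom{m}{r-1}$ dominates every error term and that the coarse block estimates are lower order.

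The main obstacle is the $2$-connected structural lemma, and within it the rotation argument for Berge paths. Unlike ordinary paths, a Berge path records both key vertices and defining edges, and a single $r$-edge can play the role of a defining edge in several different ways, so ``rotation'' and ``extension'' moves are far more delicate: one must track which edges remain available rather than merely which vertices are adjacent. Controlling the edges that meet $B$ in two or more vertices --- ruling out a spread-out configuration that still avoids a length-$k$ Berge path --- is where the genuine difficulty sits, and it is also where the large lower bound on $n$ and the condition $k>4r$ are truly needed. A secondary nuisance is the floor/ceiling bookkeeping: for even $k$ the clean closed form slightly exceeds $e(S_r(n,\lfloor\frac{k-1}{2}\rfloor))$, so one must argue separately that $S_r(n,\lfloor\frac{k-1}{2}\rfloor)$ remains extremal among all admissible hypergraphs rather than merely matching the upper bound.
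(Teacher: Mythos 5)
This statement is not proved in the paper at all: it is quoted verbatim as Theorem~18 of~\cite{FKL19-2} (F\"uredi--Kostochka--Luo), so there is no in-paper argument to compare against, and your attempt must be judged on whether it could stand as a proof of the cited result. It cannot, because it is a plan rather than a proof: the two steps you yourself identify as ``the main obstacle'' and ``where the genuine difficulty sits'' --- the Berge analogue of the Kopylov rotation argument pinning down a core $A$ of size about $\lfloor\frac{k-1}{2}\rfloor$, and the exclusion of edges meeting $B=V(H)\setminus A$ in two or more vertices --- are precisely the substance of the theorem. Deferring them is not sketching a proof; it is restating the problem. The same applies to the block-decomposition reduction: ``blocks'' and $2$-connectivity for Berge hypergraphs need a careful definition, and the charging argument that the extremum concentrates in one dominant block is asserted, not proved. (For what it is worth, the actual FKL proof does run through a theorem on $2$-connected hypergraphs without long Berge cycles plus a path-to-cycle reduction, so your skeleton is not unreasonable --- but the skeleton is all there is.)

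There is also a quantitative gap that would make your plan fail even if all the structural lemmas were granted. Set $m=\lfloor\frac{k-1}{2}\rfloor$. For odd $k$ one has $\lceil\frac{k+1}{2}\rceil=m+1$ and, by Pascal,
\begin{equation*}
\left(n-(m+1)\right)\binom{m}{r-1}+\binom{m+1}{r}=(n-m)\binom{m}{r-1}+\binom{m}{r},
\end{equation*}
so the theorem's bound coincides exactly with the natural two-term count $e(S_r(n,m))$ and leaves \emph{zero} slack; any argument that tolerates ``a negligible'' or ``bounded'' number of exceptional edges therefore proves something strictly weaker than the stated inequality, and closing that gap (no exceptional edges at all, or a fine trade-off against the core size) is exactly the hard part you have not done. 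For even $k$ one has $\lceil\frac{k+1}{2}\rceil=m+2$ and the stated right-hand side exceeds $(n-m)\binom{m}{r-1}+\binom{m}{r}$ by exactly $\binom{m}{r-2}$, so your claim that optimizing $\binom{m}{r}+(n-m)\binom{m}{r-1}$ and applying Pascal ``converts this into the stated closed form'' is simply false in that parity; the $\lceil\frac{k+1}{2}\rceil$ term does not arise from your count, and your concluding remark about arguing that $S_r(n,\lfloor\frac{k-1}{2}\rfloor)$ ``remains extremal'' is internally inconsistent with the derivation you propose (if your bound with bounded error were correct, the even-$k$ statement would need a different extremal analysis altogether). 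So both the structural core and the final counting, as written, have genuine holes.
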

In this paper, we extend  Theorem~\ref{THM: Dirac} to a hypergraph version for Berge paths, which is also a minimum degree version of Theorem~\ref{THM: FKL19-2}.

\begin{thm}\label{THM: Main1}
Given positive integers $n,k$ and $r$, let $H$ be a connected $n$-vertex $r$-graph with no Berge path of length $2k+1$.

(1) If  $k> r\ge 4$ and $n>2k+1$, then $\delta_1(H)\le\binom{k}{r-1}$. Furthermore, the equality holds if and only if  $S'_r(n,k)\subseteq H\subseteq S_r(n,k)$ or $H\cong S(sK_{k+1}^{(r)},1)$.

(2) If $k\ge r\ge 2$ and $n>2k(r-1)$, then $\delta_1(H)\le \binom{k}{r-1}$.

\end{thm}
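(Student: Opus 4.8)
The plan is to prove the contrapositive of each part: assuming $H$ is connected with $\delta_1(H)\ge\binom{k}{r-1}+1$, I will produce a Berge path of length at least $2k+1$. Fix a \emph{longest} Berge path $P=v_0e_1v_1e_2\cdots e_tv_t$ and suppose for contradiction that $t\le 2k$. The first step is a confinement lemma for the endpoints: if $f$ is any edge with $v_0\in f$ and $f\notin\{e_1,\dots,e_t\}$, then $f\subseteq K(P)$, since otherwise a vertex $w\in f\setminus K(P)$ would yield the longer Berge path $w,v_0,v_1,\dots,v_t$ (using $f$ for the step $wv_0$ and then $e_1,\dots,e_t$). Thus every non-path edge at $v_0$ has the form $\{v_0\}\cup T$ with $T$ an $(r-1)$-subset of $\{v_1,\dots,v_t\}$, and symmetrically for $v_t$; let $U_0$ (resp.\ $U_t$) be the union of all such sets $T$. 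The same confinement will hold for every endpoint produced below by rotation.

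Next I adapt P\'osa rotations to the Berge setting. If an edge $f\ni v_0$ meets the path at $v_i$ with $2\le i\le t$, then $(\{e_1,\dots,e_t\}\setminus\{e_i\})\cup\{f\}$ realises another longest Berge path with the same key-vertex set $K(P)$ but new endpoint $v_{i-1}$, so the confinement lemma applies to $v_{i-1}$ as well. The heart of the argument is a \emph{crossing lemma}: if there are an edge $f\ni\{v_0,v_i\}$ and an edge $g\ni\{v_t,v_{i-1}\}$ that are distinct from one another and from the retained path edges, then splicing them produces the Berge cycle $v_0e_1\cdots e_{i-1}v_{i-1}\,g\,v_te_t\cdots e_{i+1}v_i\,f\,v_0$ of length $t+1$ through all of $K(P)$. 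Since $n>2k+1\ge t+1$, some vertex $u\notin K(P)$ exists, and connectedness lets me attach $u$ to this cycle to obtain a Berge path of length $t+1>t$, contradicting the maximality of $P$. It remains to show that $\delta_1(H)\ge\binom{k}{r-1}+1$ forces such a crossing. Here the confinement gives $d_H(v_0)\le\binom{|U_0|}{r-1}+c_0$, where $c_0$ counts the few path edges incident to $v_0$, whence $|U_0|\ge k+1$ and likewise $|U_t|\ge k+1$; as the two index windows lie inside $\{1,\dots,t\}$ with $t\le 2k$, their shifts by one must overlap, delivering the required indices $i$.

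For part (2), where $2\le r\le k$, the base case $r=2$ is Dirac's Theorem~\ref{THM: Dirac} itself (with $\binom{k}{1}=k$), and for $r\ge 3$ the same skeleton runs, but the inversion of $\binom{\cdot}{r-1}$ and the window overlap are arranged through a coarser count in which each hyperedge occupies at most $r-1$ slots along the path; the hypothesis $n>2k(r-1)$ provides exactly the slack needed to run the crossing count and the extension step uniformly for all $2\le r\le k$, at the cost of proving only the inequality $\delta_1(H)\le\binom{k}{r-1}$. For the equality case of part (1), I would rerun the counting at equality $d_H(v)=\binom{k}{r-1}$ for a minimum-degree vertex $v$: tightness forces $|U|=k$ and that every $(r-1)$-subset of the size-$k$ window $U$ completes with $v$ to an edge. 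Propagating this rigidity along the path, and using Proposition~\ref{PROP: Extremal} to pin the length of a longest path at exactly $2k$, leaves two possibilities: either the window is a common set $A$ of size $k$ meeting every edge in all but one vertex, giving $S'_r(n,k)\subseteq H\subseteq S_r(n,k)$, or the windows assemble into complete $r$-graphs $K_{k+1}^{r}$ sharing a single vertex, giving $H\cong S(sK_{k+1}^{(r)},1)$.

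The main obstacle is the Berge bookkeeping in the rotation--crossing machinery. Unlike the graph case, a single hyperedge can host several key vertices together with arbitrary passenger vertices, so each rotation and each splice must be checked not to reuse an edge, while one edge at $v_0$ may simultaneously enable rotations to many endpoints. Maintaining edge-distinctness throughout, yet converting the ``reachable windows'' $U_0,U_t$ into the \emph{sharp} bound $\binom{k}{r-1}$ rather than the trivial $\binom{t}{r-1}$, is the delicate point. The subsequent equality analysis, which must rule out every near-extremal hybrid and isolate precisely the three families above, is where most of the case work will concentrate.
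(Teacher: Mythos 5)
Your skeleton (longest Berge path, endpoint confinement, rotations, crossing-to-cycle, cycle-plus-connectivity giving a longer path) is the same machinery the paper builds in Corollary~\ref{PROP: p1}, Proposition~\ref{PROP: set} and Lemma~\ref{LEM: cycle1}. But there is a genuine gap at the one step that carries all the quantitative weight: from $d_H(v_0)\le\binom{|U_0|}{r-1}+c_0$ you conclude ``whence $|U_0|\ge k+1$,'' and this does not follow without a bound on $c_0$. In the Berge setting the endpoint $v_0$ may occur as a passenger in up to $t=2k$ of the path edges $e_1,\dots,e_t$; these incidences inflate $d_H(v_0)$ while contributing nothing to $U_0$, so $\binom{k}{r-1}+1\le\binom{|U_0|}{r-1}+c_0$ is perfectly consistent with $|U_0|$ far below $k$, and the pigeonhole overlap of $U_0-1$ with $U_t$ never gets started. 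The paper's technical core exists precisely to control this term: it proves shift-disjointness relations between the index set $\varepsilon^i_0$ of path edges containing $v_0$ and the window $\kappa_t$ ((1.3) of Proposition~\ref{PROP: set}, and (2.1)--(2.2) under the stronger hypothesis $n>2k(r-1)$ via Lemma~\ref{LEM: cycle2}), yielding $|E^I_0|\le t-|K_t|$; combined with $|K_0|+|K_t|\le t$ this gives only $\binom{k}{r-1}\le\binom{|K_0|}{r-1}+2k-|K_0|$, which is then closed by a convexity argument, not by forcing $|U_0|\ge k+1$.

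The second thing your plan underestimates is what that weaker conclusion buys: the convexity argument yields only $\ell(H)\ge 2k$ together with the rigidity $|K_0|=|K_{2k}|=k$ when $\ell(H)=2k$ (Theorem~\ref{THM: weak}); at $t=2k$ no crossing contradiction is available, because the extremal graphs $S_r(n,k)$, $S'_r(n,k)$ and $S(sK_{k+1}^{r},1)$ genuinely realize this configuration. So the paper must convert the rigidity into a full structure theorem (Theorem~\ref{THM: 2k}), and its Corollary~\ref{COR: kk} is then needed even for the plain inequality in part (2) --- the equality analysis is not an optional refinement, as your plan treats it. Finally, the convexity step fails exactly when $(k,r)=(5,4)$, when $r=3$, and when $k=r$; these cases occupy most of the paper (the $(5,4)$ case inside Theorem~\ref{THM: weak}, and Theorems~\ref{THM: weak2} and \ref{THM: weak3}), and are where the hypothesis $n>2k(r-1)$ is actually consumed. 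Your one-sentence treatment of part (2) (``a coarser count in which each hyperedge occupies at most $r-1$ slots'') does not engage with any of this, so as written the proposal proves neither part.
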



\noindent{\bf Remarks:} (1) Proposition~\ref{PROP: Extremal} implies that the minimum degree threshold in Theorem~\ref{THM: Main1} is optimal.
For $n>2k+1>k>r\ge 4$, we characterize all extremal graphs.

(2) From Proposition~\ref{PROP: Extremal}, we know also that the sizes of the extremal graphs in Theorem~\ref{THM: Main1} are no more than the one in Theorem~\ref{THM: FKL19-2}. It is surprise for us that $S(sK_{k+1}^r, 1)$ is also an extremal graph in Theorem~\ref{THM: Main1}, which has different type of structure from $S_r(n,k)$ and $S'_r(n,k)$.


A Berge cycle $C$ in a hypergraph $H$ is called a Berge Hamiltonian cycle if $K(C)=V(H)$.
The Dirac-type results for Berge Hamiltonicity of hypergraphs have been studied in literatures.
\begin{thm}\label{THM: Main-Cor}

(a) (Bermond et al.~\cite{BGHS76}) Let $H$ be an $r$-uniform hypergraph on $n\ge r+1$ vertices. If
$\delta_1(H)\ge {{n-2}\choose {r-1}}+r-1$, then $H$ contains a hamiltonian Berge cycle.

(b) (Clemens et al.~\cite{CEP16}) Let $r\ge 3$ and let $H$ be an $r$-uniform hypergraph on $n>2r-2$ vertices. If
$\delta_1(H)\ge {{\lceil\frac n2\rceil-1}\choose {r-1}}+n-1$, then $H$ contains a hamiltonian Berge cycle.

(c) (Coulson, Perarnau~\cite{CP20}) Let $H$ be an $r$-uniform hypergraph on $n$ vertices, and suppose that $r=o(\sqrt n)$. If
$\delta_1(H)\ge {{\lceil\frac n2\rceil-1}\choose {r-1}}$, then $H$ contains a hamiltonian Berge cycle.

\end{thm}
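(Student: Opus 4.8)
The plan is to derive both upper bounds by showing that an endpoint of a longest Berge path has degree at most $\binom{k}{r-1}$, and to obtain the characterization in (1) from the equality case of that estimate. Fix a longest Berge path $P=v_0e_1v_1\cdots e_tv_t$; since $H$ has no Berge path of length $2k+1$, we have $t\le 2k$ and $|K(P)|\le 2k+1$. The first ingredient is a non-extendability move: if a non-path edge $f\ni v_0$ meets a vertex $u\notin K(P)$, then $u\,f\,v_0\,e_1\cdots e_t\,v_t$ is a longer Berge path, contradicting maximality; hence every edge through $v_0$ that is not one of $e_1,\dots,e_t$ lies inside $K(P)$, and symmetrically for $v_t$. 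This traps the relevant parts of $N_H(v_0)$ and $N_H(v_t)$ among the $(r-1)$-subsets of the at most $2k+1$ key vertices.

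The heart of the bound is a Berge analogue of Dirac's crossing argument. If a non-path edge $f$ contains $\{v_0,v_i\}$ while a distinct non-path edge $g$ contains $\{v_t,v_{i-1}\}$, then $e_1,\dots,e_{i-1},g,e_t,\dots,e_{i+1},f$ is a Berge cycle of length $t+1$ whose key set is all of $K(P)$; splicing a shortest Berge path from a vertex outside $K(P)$ into this spanning cycle (possible since $|K(P)|<n$ and $H$ is connected) produces a Berge path longer than $P$, again contradicting maximality. Writing $I_0$ and $I_t$ for the sets of indices reached by the confined edges at $v_0$ and $v_t$, this says $I_0\cap(I_t+1)=\emptyset$, so $|I_0|+|I_t|\le t\le 2k$ and one endpoint, say $v_0$, reaches at most $k$ key vertices. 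All of its confined edges then have the form $\{v_0\}\cup T$ with $T$ an $(r-1)$-subset of a $k$-set, whence $d_H(v_0)\le\binom{k}{r-1}$ and $\delta_1(H)\le\binom{k}{r-1}$, proving the inequalities in (1) and (2). For (2) the stronger hypothesis $n>2k(r-1)$ enters exactly here: since $P$ meets at most $2k(r-1)+1$ vertices, it guarantees a vertex outside the path to drive every extension, which is what makes the argument valid for all $k\ge r\ge 2$. When $r=2$ the estimate is precisely Dirac's Theorem~\ref{THM: Dirac}.

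For the characterization in (1), assume $k>r\ge 4$, $n>2k+1$ and $\delta_1(H)=\binom{k}{r-1}$. Then every inequality above is tight: $t=2k$, $|I_0|=k$, and each edge through $v_0$---including the path edge $e_1$---is confined to a fixed $k$-set $A\subseteq K(P)$ with $N_H(v_0)=\binom{A}{r-1}$. Using Pósa-type rotations (delete $e_i$, insert an edge through $\{v_0,v_i\}$, read off the new endpoint $v_{i-1}$) I would propagate this rigidity to every endpoint of a longest path and then study how the resulting $k$-sets interact. Two regimes appear. If a single $k$-set $A$ serves all endpoints, then every edge of $H$ either lies in $A$ or meets $A$ in $r-1$ vertices and the complement in one, which squeezes $H$ between $S'_r(n,k)$ and $S_r(n,k)$. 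If instead the local $k$-sets organise into several complete pieces overlapping in one vertex, the degree tightness forces each piece to be a $K_{k+1}^r$ and the pieces to share a single hub, giving $H\cong S(sK_{k+1}^r,1)$. Proposition~\ref{PROP: Extremal} confirms that both families attain $\delta_1=\binom{k}{r-1}$, so these are exactly the extremal hypergraphs.

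The step I expect to be the main obstacle is the exact degree count, and in particular the treatment of path edges at an endpoint. A single extension move traps every \emph{non-path} edge inside $K(P)$, but the edge $e_1$ (and any edge in which $v_0$ occurs only as a passenger) may carry vertices outside $K(P)$ and yields no immediate extension; forcing $e_1\subseteq A$ so that it is absorbed into the $\binom{k}{r-1}$ counted sets---rather than creating a surplus that would defeat the exact bound---is the delicate point. This is where the hypotheses $r\ge 4$ and $k>r$ (room inside each edge), $n>2k+1$ (uncovered vertices for the equality case), and $n>2k(r-1)$ in part (2) are genuinely used, and the same passenger-vertex bookkeeping is what ultimately separates the windmill $S(sK_{k+1}^r,1)$ from the split family in the final case distinction.
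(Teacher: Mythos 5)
Your proposal does not prove the statement it is attached to. Theorem~\ref{THM: Main-Cor} is a triple of results about Berge \emph{Hamiltonian cycles}, and the paper offers no proof of it at all: parts (a), (b), (c) are quoted from Bermond et al.~\cite{BGHS76}, Clemens et al.~\cite{CEP16}, and Coulson--Perarnau~\cite{CP20}, respectively. What you have sketched instead is an argument for Theorem~\ref{THM: Main1}: the endpoint degree bound $\binom{k}{r-1}$ for a connected $r$-graph with no Berge path of length $2k+1$, together with the extremal characterization $S'_r(n,k)\subseteq H\subseteq S_r(n,k)$ or $H\cong S(sK_{k+1}^{r},1)$. Nowhere in your write-up is a Hamiltonian Berge cycle constructed, or even mentioned, so the conclusion of (a), (b) and (c) is simply never reached.

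Even granting your path machinery, none of (a)--(c) would follow from it without further ideas. First, (a)--(c) do not assume connectivity, so one must derive it from the degree hypothesis before any longest-path argument applies (the paper does exactly this at the start of the proof of Theorem~\ref{THM: Main3}, via the sets $A_v$ of vertices at codegree at least one from $v$). Second, a lower bound on the longest Berge path is not enough for Hamiltonicity: one needs a closure step turning a longest path into a spanning Berge cycle, which in the paper is Lemma~\ref{LEM: Cor-Lem}, resting on the convexity of $f(x)=\binom{x}{r-1}+t-x$ and on the \emph{additive} term $\lceil t/2\rceil$ in the degree hypothesis --- an additive term your bound $\delta_1(H)\le\binom{k}{r-1}$ does not see, and which is precisely why (b) carries the summand $n-1$ and the paper's improvement (Theorem~\ref{THM: Main3}) the summand $\lceil\frac{n-1}{2}\rceil$. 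Third, (c) has no additive term at all and holds whenever $r=o(\sqrt n)$; it is proved by Coulson and Perarnau with quite different (absorption-type) techniques and is out of reach of the rotation argument you outline. Finally, as you yourself flag, even read as a proof of Theorem~\ref{THM: Main1} your sketch leaves open the ``passenger'' path-edge bookkeeping --- in the paper this is the whole apparatus of $E^I_a$, $\varepsilon^i_a$ in Proposition~\ref{PROP: set} and the case analyses of Theorems~\ref{THM: weak}, \ref{THM: 2k}, \ref{THM: weak2} and \ref{THM: weak3} --- so the proposal is incomplete on its own terms as well as being aimed at the wrong theorem.
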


Clearly, (b) and (c) are improvements of (a), as mentioned by the authors of~\cite{CEP16}, the term $n-1$ in (b) can be reduced, (c) eliminated the term $n-1$ from (b), but which requires that $r$ is much smaller than $n$.
As an application of Theorem~\ref{THM: Main1}, we reduce the term $n-1$ to $\lceil\frac{n-1}{2}\rceil$ for $r\ge 4$ and $n\ge 2r+4$.
\begin{thm}\label{THM: Main3}
For $r\ge 4$ and $n\ge 2r+4$, let $H$ be an $r$-graph with $\delta_1(H)>\binom{\lfloor\frac{n-1}{2}\rfloor}{r-1}+\lceil\frac{n-1}{2}\rceil$, then $H$ contains a hamiltonian Berge cycle.
\end{thm}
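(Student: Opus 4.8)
The plan is to use Theorem~\ref{THM: Main1} to produce an (almost) Hamiltonian Berge path and then to close it into a Berge cycle by a Dirac--P\'osa style crossing argument adapted to the Berge setting. First I would check that the hypothesis forces $H$ to be connected: if the component of a vertex $v$ has $c$ vertices then every edge through $v$ chooses its remaining $r-1$ vertices inside that component, so $d_H(v)\le\binom{c-1}{r-1}$; since $\delta_1(H)>\binom{\lfloor(n-1)/2\rfloor}{r-1}$ this gives $c-1>\lfloor(n-1)/2\rfloor$, so every component has more than $n/2$ vertices and $H$ is connected. Put $k=\lfloor\frac{n-1}2\rfloor$; as $n\ge 2r+4$ we have $k\ge r+1>r\ge4$ and $\delta_1(H)>\binom{k}{r-1}$.

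When $n$ is even, $2k+1=n-1$ and $n>2k+1$, so the contrapositive of Theorem~\ref{THM: Main1}(1) yields a Berge path of length $n-1$, i.e. a Hamiltonian Berge path. When $n$ is odd the equality $n=2k+1$ blocks Theorem~\ref{THM: Main1}(1) for this $k$, so I would instead apply it with parameter $k-1$ (now $n>2(k-1)+1$ holds), obtaining a Berge path covering all but one vertex $w$; since $d_H(w)$ is large, $w$ co-occurs with at least $k+1>(n-1)/2$ of the path vertices, so two consecutive path vertices $v_i,v_{i+1}$ lie in edges at $w$, and splicing $w$ between them through two new distinct edges produces a Hamiltonian Berge path. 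In all cases I may therefore assume a Hamiltonian Berge path $P=v_0e_1v_1\cdots e_{n-1}v_{n-1}$.

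To close $P$, for $i\in[1,n-1]$ the crossing at $i$ gives the Berge cycle on $v_0v_iv_{i+1}\cdots v_{n-1}v_{i-1}\cdots v_1v_0$, which reuses every path edge except $e_i$ together with two closing edges $f\ni\{v_0,v_i\}$ and $g\ni\{v_{i-1},v_{n-1}\}$; it is a Hamiltonian Berge cycle as soon as $f,g$ are new (distinct from the used path edges) and $f\ne g$. Let $A$ be the set of positions co-occurring with $v_0$ and $B$ the set of positions $i$ for which $v_{i-1}$ co-occurs with $v_{n-1}$. Every edge through $v_0$ has its remaining $r-1$ vertices among $\{v_i:i\in A\}$, so $\binom{|A|}{r-1}\ge d_H(v_0)>\binom{k}{r-1}$ and hence $|A|\ge k+1$; likewise $|B|\ge k+1$. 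Since $A,B\subseteq[1,n-1]$ and $2(k+1)\ge n>n-1$, the sets $A$ and $B$ meet, providing a crossing position $i$.

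The main obstacle, and the point where the additive term $\lceil\frac{n-1}2\rceil$ enters, is the Berge edge-distinctness bookkeeping at the closing step: a crossing position only guarantees incident hyperedges, which might coincide with path edges or with each other. Here the surplus $d_H(v_0)-\binom{k}{r-1}>\lceil\frac{n-1}2\rceil$ and the symmetric bound at $v_{n-1}$ are spent on selecting genuinely new closing edges. The improvement over the term $n-1$ in Theorem~\ref{THM: Main-Cor}(b) comes from distributing the avoidance between the two endpoints: the crossing discards $e_i$ and splits the remaining path edges into the arc meeting $v_0$ and the arc meeting $v_{n-1}$, so $f$ and $g$ each need only dodge about half of them, which is precisely the surplus available at each endpoint. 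Turning this informal split into a rigorous choice of $f\ne g$ avoiding all used path edges, together with the insertion step for odd $n$, is the delicate heart of the argument.
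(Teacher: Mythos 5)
There is a genuine gap, and you name it yourself: the entire Berge edge-distinctness bookkeeping at the closing step (and at the insertion step for odd $n$) is deferred as ``the delicate heart of the argument'' and never carried out. Concretely, your sets $A$ and $B$ record co-occurrence with $v_0$ (resp.\ $v_{n-1}$) via \emph{any} edge, including the path edges $e_1,\dots,e_{n-1}$ themselves; this is what makes the bounds $|A|,|B|\ge k+1$ easy, but it also makes them useless for closing the cycle. A crossing position $i\in A\cap B$ may be witnessed only by path edges (e.g.\ $v_0\in e_j$ for some $j\neq i$), and then the crossing cycle reuses $e_j$ twice, which is forbidden in a Berge cycle. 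To repair this one must separate, for each endpoint $v_a$, the co-occurrences via edges outside the path (the paper's $K_a$, $\kappa_a$) from the incidences with path edges (the paper's $E^I_a$, $\varepsilon^i_a$), and one then needs the nontrivial disjointness relations $(\kappa_0-1)\cap\kappa_t=\emptyset$ and $(\varepsilon^i_0-1)\cap\kappa_t=\varepsilon^i_t\cap\kappa_0=\emptyset$ (Proposition~\ref{PROP: set} (1.1)--(1.3), which by the Remark persist when $n=t+1$ precisely because their failure would produce a Hamiltonian Berge cycle). Your surplus term $\lceil\frac{n-1}{2}\rceil$ cannot be ``spent'' position by position without these relations; nothing in your counting prevents all of the surplus degree of $v_0$ from being absorbed by path edges containing $v_0$. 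The same unresolved issue ($f\ne g$, $f,g\notin E(P)$) afflicts your splicing of $w$ in the odd case.

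For comparison, the paper does not hunt for explicit closing edges at all. It proves Lemma~\ref{LEM: Cor-Lem}: if $\ell(H)=t$ and $H$ has \emph{no} Hamiltonian Berge cycle (or $n>t+1$), then for a longest path the relations above give $|K_0|+|K_t|\le t$ and $|E^I_0|\le t-|K_t|$, whence by Corollary~\ref{PROP: p1}(b) and convexity
$$d_H(v_0)\le\binom{|K_0|}{r-1}+t-|K_0|\le\binom{\left\lfloor\frac t2\right\rfloor}{r-1}+\left\lceil\frac t2\right\rceil,$$
contradicting the degree hypothesis. This contradiction scheme is exactly the rigorous version of your ``distribute the avoidance between the two endpoints.'' The odd case is also handled differently and more cleanly: rather than inserting the missed vertex $w$, the paper applies the lemma with $t=2k+1$ to derive the false conclusion $n=2k+2$, which forces $\ell(H)=2k+2=n-1$, and then applies the lemma once more. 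Your outline reaches the right reductions (connectivity, Theorem~\ref{THM: Main1} for a long path), but without an argument playing the role of Lemma~\ref{LEM: Cor-Lem} it does not constitute a proof.
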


The rest of this article is arranged as follows. In Section 2, we give some useful lemmas. We prove Theorems~\ref{THM: Main1} and \ref{THM: Main3} in Sections 3 and 4, respectively. Some discussions and remarks will be given in the last section.

\section{Preliminaries and lemmas}

For an $r$-graph $H$, we use $\ell(H)$ (resp. $c(H)$) denote the length of a longest Berge path (resp. Berge cycle) contained in $H$.

\begin{lem}\label{LEM: cycle1}
Let $H$ be a connected $n$-vertex $r$-graph. If $\ell(H)+1\le c(H)$ then $n=c(H)$.
\end{lem}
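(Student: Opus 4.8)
The plan is to argue by contradiction, producing a Berge path strictly longer than $\ell(H)$ whenever the longest Berge cycle fails to span all vertices. Note first that the hypothesis $\ell(H)+1\le c(H)$ says that every Berge path of $H$ has length at most $c(H)-1$, so it will suffice to exhibit one Berge path of length $c(H)$.

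First I would fix a longest Berge cycle $C$ of length $c:=c(H)$, with key vertices $v_1,\dots,v_c$ and edges $e_1,\dots,e_c$ satisfying $\{v_{i-1},v_i\}\subseteq e_i$ (indices mod $c$). Since the $v_i$ are distinct we have $c\le n$, and I would suppose for contradiction that $c<n$, so that some vertex $u$ lies outside $K(C)$. Because $H$ is connected there is a Berge path from $u$ to a vertex of $K(C)$; walking along it and stopping at the first vertex that lies in $K(C)$ isolates an edge $f$ together with vertices $w\notin K(C)$ and $v_j\in K(C)$ satisfying $\{w,v_j\}\subseteq f$. Everything then reduces to absorbing $w$ into a path built from the cycle.

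I would split according to whether $f$ is a cycle edge. If $f\notin\{e_1,\dots,e_c\}$, then deleting $e_j$ turns $C$ into a Berge path from $v_{j-1}$ to $v_j$ of length $c-1$ on the key vertices of $C$; prepending $w$ via the edge $f$ at the endpoint $v_j$ gives the path $w,f,v_j,\dots,v_{j-1}$, which uses $c$ distinct edges (as $f$ is not a cycle edge) and $c+1$ distinct key vertices (as $w\notin K(C)$), hence has length $c$. If instead $f=e_k$ for some $k$, then the cycle edge $e_k$ itself contains the non-key vertex $w$; here I would delete $e_k$ from $C$ to get a Berge path from $v_{k-1}$ to $v_k$, and then \emph{re-use} $e_k$ to attach $w$ at $v_{k-1}$, which is legitimate since $\{w,v_{k-1}\}\subseteq e_k$. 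This again yields a Berge path of length $c$. In both cases the resulting path has length $c>\ell(H)$, the desired contradiction, so $K(C)=V(H)$ and $n=c=c(H)$.

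The only delicate point is the case $f=e_k$, where the edge that brings us back to the cycle is itself already used by the cycle. The key observation that resolves it is that a non-key vertex sitting inside a cycle edge can be absorbed into a longer path by deleting that very edge and immediately reusing it to hang the new vertex off an endpoint, so that no additional edge is required; this is what prevents the argument from stalling when the hypergraph's edges carry extra passenger vertices.
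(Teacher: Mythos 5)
Your proof is correct and follows essentially the same route as the paper's: the paper first shows $V(C)=K(C)$ by absorbing a passenger vertex of a cycle edge via the same edge-reuse trick as your Case 2, and then uses connectivity to attach an outside vertex through a non-cycle edge exactly as in your Case 1. The only difference is organizational (you run the connectivity argument first and then split on whether $f\in E(C)$, rather than disposing of passenger vertices in advance), and both constructions of the length-$c(H)$ Berge path coincide.
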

\begin{proof}
Since every Berge cycle of length $t+1$ has a Berge path of length $t$ as its subgraph, $c(H)\le \ell(H)+1$, which implies that $\ell(H)+1=c(H)$.
Choose a longest Berge cycle $C$ in $H$ with $K(C)=\{v_1,v_2,...,v_{t}\}$ and $E(C)=\{e_1,...,e_{t}\}$, i.e. $t=c(H)$.
First, we claim that $V(C)=K(C)$. Otherwise, without loss of generality, suppose there is a vertex $v_0\in e_{t}$ and $v_0\notin K(C)$. Then we can find a Berge path $P$ of length $t$ with $K(P)=\{v_0,v_1,...,v_t\}$ and $E(P)=\{e_t,e_1,e_2,...,e_{t-1}\}$, which contradicts to $\ell(H)=t-1$.
Since $n=|V(H)|\ge|V(C)|=t$, we only need to prove that $n\le t$. Suppose $n>t$. Since  $H$ is connected, there must be a vertex $u\in V(H)\setminus V(C)$ and an edge $e\in E(H)\setminus E(C)$ such that $u\in e$ and $e\cap V(C)\neq\emptyset$. Without loss of generality, assume $v_t\in e$. Then again we can find a Berge path $P$ of length $t$ with $K(P)=\{v_1,...,v_t, u\}$ and $E(P)=\{e_1,e_2,...,e_{t-1},e\}$, which is a contradiction.
\end{proof}

\begin{lem}\label{LEM: cycle2}
Let $k\ge r\ge 2$ and $n>2k(r-1)$ and let $H$ be a connected $n$-vertex $r$-graph with $\delta_1(H)>\binom{k}{r-1}$. If $\ell(H)\le c(H)$ then $c(H)\ge 2k+1$.
\end{lem}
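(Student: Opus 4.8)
The plan is to combine the trivial inequality $c(H)\le \ell(H)+1$ with a maximality (Erd\H{o}s--Gallai/Ore-type) argument on a longest Berge path. First I would record that, since a Berge cycle of length $s$ always contains a Berge path of length $s-1$, one has $c(H)\le \ell(H)+1$; together with the hypothesis $\ell(H)\le c(H)$ this forces $c(H)\in\{\ell(H),\,\ell(H)+1\}$. If $c(H)=\ell(H)+1$, then Lemma~\ref{LEM: cycle1} yields $n=c(H)$, and since $r\ge 2$ we have $c(H)=n>2k(r-1)\ge 2k$, so $c(H)\ge 2k+1$ as required. Hence the whole content is the case $c(H)=\ell(H)=:t$, where I would assume for contradiction that $t\le 2k$.

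In this case I would first locate a vertex outside a longest cycle. Fix a longest Berge cycle $C$ with key vertices $v_1,\dots,v_t$ and edges $e_1,\dots,e_t$; each $e_i$ contains its two key vertices together with at most $r-2$ further vertices, so $|V(C)|\le t(r-1)\le 2k(r-1)<n$. Thus some vertex $x$ lies outside $V(C)$, and by connectedness there is an edge $e^\ast\notin E(C)$ meeting both $x\notin V(C)$ and $V(C)$. If $e^\ast$ met $V(C)$ only in a non-key vertex of some $e_i$, splicing $e^\ast$ and $e_i$ into $C$ would already give a Berge path of length $t+1$, contradicting $\ell(H)=t$; so $e^\ast$ meets $V(C)$ in a key vertex $v_j$, and then $x,v_j,v_{j+1},\dots,v_{j-1}$ is a longest Berge path $P$ (of length $t$) having the fresh endpoint $x\notin V(C)$. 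Write $A=K(P)$, so $|A|=t+1\le 2k+1$.

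The core is then an Ore-type estimate on the two endpoints of $P$. Two facts drive it: (i) because $P$ is longest, every edge through an endpoint that is not an edge of $P$ lies inside $A$ (otherwise a vertex outside $A$ in such an edge would prolong $P$); and (ii) there is no Berge cycle of length $t+1$, since that would exceed $c(H)=t$. Fact (ii) is a non-closability condition: if the endpoint $x$ could reach some $v_{i+1}$ through a spare edge while the other endpoint reaches $v_i$, the path would close into a Berge cycle of length $t+1$. Feeding (ii) into a P\'osa-type rotation --- which preserves the key set $A$ and, by (i), only ever uses edges inside $A$ --- confines the set of vertices sharing an edge with $x$ to at most $\lfloor (t+1)/2\rfloor\le k$ of the vertices of $A$. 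Every edge through $x$ is then an $r$-set consisting of $x$ and $r-1$ of these $\le k$ vertices, so $d_H(x)\le \binom{k}{r-1}$, contradicting $\delta_1(H)>\binom{k}{r-1}$ and finishing the proof.

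The main obstacle is exactly this last \emph{halving} step in the Berge setting. In the graph case $r=2$ it is the classical bound $d(v_0)+d(v_t)\le t\le 2k$, which instantly forces an endpoint of degree $\le k=\binom{k}{r-1}$; but for general $r$ one must control the up-to-$r-2$ extra vertices carried by each Berge edge, keep the rotation edges, the cycle edges and $e^\ast$ mutually distinct, and check that non-closability really forbids the relevant family of edge patterns through $x$ rather than merely a single crossing. The hypothesis $n>2k(r-1)$ enters only to produce the external vertex $x$ in the second paragraph, which is precisely what keeps the endpoint's shadow small enough for the count $\binom{k}{r-1}$ to be attained.
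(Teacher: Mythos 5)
Your reduction to the case $\ell(H)=c(H)=t\le 2k$ and your setup --- producing $x\notin V(C)$ from $|V(C)|\le 2k(r-1)<n$, confining every edge through $x$ to the key vertices of $C$, and exploiting that $x$ cannot be inserted into $C$ --- coincide with the paper's proof. The genuine gap is the final ``halving'' step, which you yourself flag as the main obstacle and then assert anyway: it is not true that non-closability confines $N$, the set of vertices sharing an edge with $x$, to at most $\lfloor (t+1)/2\rfloor\le k$ key vertices. What non-insertability actually yields (and what the paper proves) is only this: if $v_p$ lies in an edge $e\ni x$ and $v_{p+1}$ lies in a \emph{different} edge $e'\ni x$, then $x$ can be inserted between $v_p$ and $v_{p+1}$, giving a Berge cycle of length $t+1$; hence any run of consecutive vertices of $N$ must lie inside one single edge through $x$, avoided by all other edges through $x$. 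This permits $|N|>k$: for $r=3$ and $t=2k$ the pattern $\{x,v_1,v_2\},\{x,v_4,v_5\},\{x,v_7,v_8\},\dots$ violates no such constraint, yet $|N|=2t/3=4k/3>k$. So the inference $d_H(x)\le\binom{|N|}{r-1}\le\binom{k}{r-1}$ breaks down, and no P\'osa-rotation argument can rescue it in this form.

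The paper closes exactly this hole with a finer, structurally different count. It partitions $N$ into maximal arcs of consecutive key vertices, with $n_1$ arcs of size one and $n_{\ge 2}$ arcs of size at least two; since each arc is followed by a vertex outside $N$, one gets $n_1\le t/2\le k$ and $n_{\ge 2}\le\frac{2}{3}(k-n_1)$. The insertion argument shows each arc of size at least two is contained in exactly one edge through $x$ and is disjoint from every other edge through $x$, so the edges through $x$ split into two types: at most $n_{\ge 2}$ edges meeting some long arc, and at most $\binom{n_1}{r-1}$ edges built only from singleton-arc vertices. Thus $d_H(x)\le n_{\ge 2}+\binom{n_1}{r-1}\le\frac{2k}{3}+\binom{n_1}{r-1}-\frac{2n_1}{3}$, and convexity of $f(y)=\binom{y}{r-1}-\frac{2y}{3}$ on $[0,k]$ contradicts $d_H(x)>\binom{k}{r-1}$. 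The essential point your proposal misses is that edges meeting long arcs must be counted \emph{linearly} (one per arc), not binomially through a bound on $|N|$; that is what makes the threshold $\binom{k}{r-1}$ correct even though the neighborhood itself can be much larger than $k$.
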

\begin{proof}
If $\ell(H)\le c(H)-1$, by Lemma~\ref{LEM: cycle1}, we have $c(H)=n\ge 2k(r-1)+1\ge 2k+1$. Now assume $\ell(H)=c(H)=t$.
Suppose to the contrary that $t\le 2k$. Pick a Berge cycle $C$ of length $t$ in $H$ with $K(C)=\{v_1,v_2,...,v_{t}\}$ and $E(C)=\{e_1,...,e_{t}\}$.\\
Since $$|V(C)|=\left|\bigcup_{i=1}^{t}e_i\right|\le\sum_{i=1}^{t}|e_i\setminus\{v_i,v_{i+1}\}|+|K(C)|\le 2k(r-1),$$
where the indices take modulo $t$. Since $H$ is connected and $n>2k(r-1)$, there is a vertex $u\in V(H)\setminus V(C)$ and an edge $e_0\in E(H)\backslash E(C)$ such that $u\in e_0$ and $e_0\cap V(C)\neq\emptyset$.
\begin{claim}\label{CLAIM: c1}
For any edge $e$ with $u\in e$, $e\setminus\{u\}\subseteq  K(C)$. 
\end{claim}
Suppose to the contrary that there is a vertex $w\in e\setminus \{u\}$ but $w\notin K(C)$. If $e\cap V(C)\not=\emptyset$, without loss of generality, assume  $w\in e\cap e_t$. Then the path $P$ with $K(P)=\{v_1,v_2,...,v_t,w,u\}$ and $E(P)=\{e_1,e_2,...,e_t,e\}$ is a Berge path of length $t+1$, a contradiction.
Now assume $e\cap V(C)=\emptyset$. Then $e\neq e_0$. Without loss of generality, suppose $v_t\in e_0$. Then the path $P$  with $K(P)=\{v_1,...,v_t,u,w\}$ and $E(P)=\{e_1,...,e_{t-1},e_0,e\}$ is a Berge path of length $t+1$, a contradiction too.



Let $N=\cup_{u\in e}(e\setminus\{u\})$. By Claim~\ref{CLAIM: c1}, $N\subseteq K(C)$. We say $v_i,v_j\in N$ are equivalent if and only if for each $v_p\in \{v_i,v_{i+1}, \ldots, v_j\}$ or $v_p\in\{v_j, v_{j+1}, \ldots, v_i\}$, $v_p\in N$, where the indices take modulo $t$. Clearly, we can partition $N$ into equivalent classes by the equivalent relation. Let $\mathcal{C}$ be the set of equivalent classes of $N$.
Let $n_i$ and $n_{\ge i}$ be the numbers of the equivalent classes of size $i$ and at least $i$, respectively.
Since for each class $M=\{v_i,...,v_j\}\in \mathcal{C}$ we have $v_{j+1}\notin N$. So $|N|\le t-|\mathcal{C}|=t-n_1-n_{\ge 2}$. Also we have $|N|=\sum\limits_{M\in\mathcal{C}}|M|\ge n_1+2n_{\ge 2}$. Therefore, $n_1\le\frac{t}{2}\le k$ and $n_{\ge2}\le\frac{1}{3}(t-2n_1)\le\frac{2}{3}(k-n_1)$.

Now choose $M\in\mathcal{C}$ with $|M|\ge 2$. Assume $M=\{v_i, v_{i+1}, \ldots, v_j\}$. Choose $e$ such that $u, v_i\in e$. We claim that $M\subseteq e$ and for any edge $e'\neq e$ with $u\in e'$, $e'\cap M=\emptyset$. In fact, we show that for any $p\in \{i, i+1,\cdots, j-1\}$ if $u, v_p\in e$ then for any edge $e'\neq e$ with $u\in e'$, $v_{p+1}\notin e'$. Otherwise, the cycle $C$ with $K(C)=\{v_1,\ldots, v_p, u, v_{p+1},\ldots, v_t\}$ and  $E(C)=\{e_1,\ldots,e_{p-1},e,e', e_{p+1}, \ldots, e_t\}$ is a Berge cycle of length $t+1$, a contradiction. Therefore, $v_{p+1}\in e$ by the definition of $N$. This implies the claim.
By the claim, an equivalent class $M$ with $|M|\ge 2$ is contained in only one edge $e$ with $u\in e$. So there are at most $n_{\ge 2}$ edges $e$ such that $u\in e$ and $e$ contains at least one equivalent class of size at least $2$.
Hence,
\begin{equation}\label{EQN: e1}
\binom{k}{r-1}<d_H(u)\le n_{\ge 2}+\binom{n_1}{r-1}\le\frac{2k}{3}+\binom{n_1}{r-1}-\frac{2n_1}{3}\mbox{.}
\end{equation}
Let $f(x)=\binom{x}{r-1}-\frac{2x}{3}$. By inequality (1), we have $f(k)<f(n_1)$. But, by the convexity of  $f(x)$ on $[0,k]$ and $n_1\in [0,k]$, we have  $f(n_1)\le\max\{f(0),f(k)\}=f(k)$, a contradiction.
\end{proof}

In the following, we give some properties of the longest Berge paths in a hypergraph.
Let $P$ be a Berge path with $K(P)=\{v_0,v_1,...,v_t\}$ and $E(P)=\{e_1,...,e_t\}$ in an $r$-graph $H$. For $a\in[0,t]$, we define
$E^O_a(P)=\{ e\in E(H)\setminus E(P) : v_a\in e\}$,
$$K_a(P)=\{v_i\in K(P)\setminus\{v_a\} : \mbox{there exists  } e\in E_a^O(P)\mbox{ with } v_i\in e\},$$
$\kappa_a(P)=\{i : v_i\in K_a(P)\}$, $E^I_a(P)=\{ e_i\in E(P) : v_a\in e_i\}$ and $\varepsilon^i_a(P)=\{ i : e_i\in E^I_a(P)\}$.
If the path $P$ is clear from the context, write $K_a, \kappa_a, E^I_a, \varepsilon^i_a, E^O_a$ for $K_a(P), \kappa_a(P), E^I_a(P), \varepsilon^i_a(P), E^O_a(P)$ for short.
Similarly, for $v\in V(H)\setminus K(P)$, define $E^O_v(P)=\{ e\in E(H)\setminus E(P) : v\in e\}$,
$$K_v(P)=\{ v_i: \mbox{ there exists } e\in E^O_v(P)\mbox{ with } v_i\in e\},$$
$\kappa_v(P)=\{ i : v_i\in K_v(P)\}$ and $E^I_v(P)=\{ e_i\in E(P) : v\in e_i\}$,  $\varepsilon^i_v(P)=\{ i : e_i\in E^I_v(P)\}$. Also $P$ will be omitted if the path $P$ is clear from the context.
A Berge path $P$ is called {\it extendible } if we can get a longer Berge path from $P$ by removing $s$ edges from $P$ and adding at least $s+1$ new edges.
Otherwise, we call $P$ is {\it non-extendible}. Clearly, a longest Berge path in a hypergraph is non-extendible.
For a family $A$ of sets and a set $B$, let $A+B=\{a\cup B : a\in A\}$ and $A-B=\{a-B : a\in A\}$.
 For a set $A$ of integers and an integer $b$, let $A+b=\{a+b : a\in A\}$ and $A-b=\{a-b : a\in A\}$.

The following is a simple observation.
\begin{obs}\label{PROP: p0}
Let $P$ be a Berge path in $r$-graph $H$ with $E(P)=\{e_1,...,e_t\}$ and $K(P)=\{v_0,v_1,...,v_t\}$.
If $K_v(P)\cap\{v_0, v_t\}\not=\emptyset$ for some $v\notin K(P)$ then $P$ is extendible.

\end{obs}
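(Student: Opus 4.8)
The plan is to exhibit directly a Berge path that is strictly longer than $P$ and is obtained from $P$ without deleting any edge; by the definition of extendibility (with the parameter $s=0$, removing $0$ edges and adding $0+1=1$ new edge), producing such a path is exactly what is needed.

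First I would use the reversal symmetry of a Berge path to reduce the hypothesis to a single case. Relabelling $P$ in reverse order interchanges $v_0$ and $v_t$ while leaving the notions $E^O_v(P)$ and $K_v(P)$ unchanged, so I may assume without loss of generality that $v_t\in K_v(P)$. By the definition of $K_v(P)$ this gives an edge $e\in E^O_v(P)$ with $v_t\in e$, and by the definition of $E^O_v(P)$ this same $e$ satisfies $v\in e$ and $e\notin E(P)$.

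Next I would append $v$ to the end of $P$ through $e$. Concretely, set $K(P')=\{v_0,v_1,\ldots,v_t,v\}$ and $E(P')=\{e_1,\ldots,e_t,e\}$, and then verify the three defining conditions of a Berge path. The $t+2$ listed vertices are pairwise distinct, since $v_0,\ldots,v_t$ already are and $v\notin K(P)$; the $t+1$ listed edges are pairwise distinct, since $e_1,\ldots,e_t$ already are and $e\notin E(P)$; and the consecutive containments $\{v_{i-1},v_i\}\subseteq e_i$ for $1\le i\le t$ are inherited from $P$, while the new final pair satisfies $\{v_t,v\}\subseteq e$. Hence $P'$ is a Berge path of length $t+1$.

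Since $P'$ arises from $P$ by removing $0$ edges and adding $1$ new edge, $P$ is extendible. I do not expect any genuine obstacle here; the only step meriting attention is the reversal reduction, which lets the case $v_0\in K_v(P)$ be treated on exactly the same footing as the case $v_t\in K_v(P)$ rather than written out twice.
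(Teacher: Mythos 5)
Your proof is correct and is exactly the argument the paper intends: the Observation is stated without proof precisely because this one-step extension (append $v$ via an edge $e\in E^O_v(P)$ containing $v_0$ or $v_t$, giving a Berge path of length $t+1$ with $s=0$ edges removed) is immediate from the definitions. The reversal reduction is fine, since reversing the labelling leaves $E(P)$, $K(P)$, and hence $E^O_v(P)$ and $K_v(P)$ unchanged.
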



\begin{cor}\label{PROP: p1}
Let $P$ be a longest Berge path in $r$-graph $H$ with $E(P)=\{e_1,...,e_t\}$ and $K(P)=\{v_0,v_1,...,v_t\}$. Then the following statements hold.

(a) For each edge $e\in E_a^O(P)$, we have $e\setminus\{v_a\}\subseteq K_a(P)$ for $a\in\{0,t\}$.

(b) $d_H(v_a)\le \binom{|K_a|}{r-1}+|E^I_a|$ for $a\in\{0,t\}$. Moreover, the equality holds if and only if $N_H(v_a)=\binom{K_a}{r-1}\cup (E^I_a-\{v_a\})$ and $\binom{K_a}{r-1}\cap (E^I_a-\{v_a\})=\emptyset$.
\end{cor}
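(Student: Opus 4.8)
The plan is to read off both parts from the non-extendibility of a longest Berge path, invoking Observation~\ref{PROP: p0} for part (a) and then a short counting argument, carried out on the link sets, for part (b). For part (a) I would fix $a\in\{0,t\}$ and $e\in E^O_a(P)$, and take an arbitrary $w\in e\setminus\{v_a\}$. Since $w\ne v_a$, the definition of $K_a(P)$ shows that $w$ can fail to lie in $K_a(P)$ only if $w\notin K(P)$, so it suffices to rule this out. Assuming $w\notin K(P)$, the outside edge $e$ witnesses $e\in E^O_w(P)$ with $v_a\in e$, whence the endpoint $v_a$ lies in $K_w(P)$ and $K_w(P)\cap\{v_0,v_t\}\ne\emptyset$. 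Observation~\ref{PROP: p0} then makes $P$ extendible, contradicting that $P$ is longest. (Explicitly, prepending $e$ at $v_0$, or appending it at $v_t$, together with the new vertex $w$, yields a Berge path of length $t+1$.) Hence every $w\in e\setminus\{v_a\}$ lies in $K_a(P)$, i.e. $e\setminus\{v_a\}\subseteq K_a(P)$.

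For the inequality in part (b) I would split the edges through $v_a$ into those lying on $P$ and those off $P$, so that $d_H(v_a)=|E^I_a|+|E^O_a|$. Part (a) gives $e\setminus\{v_a\}\in\binom{K_a}{r-1}$ for each $e\in E^O_a$, and since $H$ is simple the map $e\mapsto e\setminus\{v_a\}$ is injective; thus $|E^O_a|\le\binom{|K_a|}{r-1}$, which yields $d_H(v_a)\le\binom{|K_a|}{r-1}+|E^I_a|$.

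For the equality characterization I would work with the link set $N_H(v_a)$. By simplicity, $|N_H(v_a)|=d_H(v_a)$, and $N_H(v_a)$ is the union of the outside links $L_O:=\{e\setminus\{v_a\}:e\in E^O_a\}\subseteq\binom{K_a}{r-1}$ and the inside links $L_I:=E^I_a-\{v_a\}$, with $|L_I|=|E^I_a|$ since distinct path edges through $v_a$ give distinct links; moreover $L_O\cap L_I=\emptyset$ automatically, as a common link would force a non-path edge to coincide with a path edge. Equality in the bound is equivalent to $|E^O_a|=\binom{|K_a|}{r-1}$, i.e. to $L_O=\binom{K_a}{r-1}$, which immediately gives $N_H(v_a)=\binom{K_a}{r-1}\cup(E^I_a-\{v_a\})$ with $\binom{K_a}{r-1}\cap(E^I_a-\{v_a\})=L_O\cap L_I=\emptyset$. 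Conversely, this set equality together with the stated disjointness gives $|N_H(v_a)|=\binom{|K_a|}{r-1}+|E^I_a|$, which is exactly equality in the degree bound. The whole argument is bookkeeping once part (a) is in hand, so I do not expect a genuine obstacle; the one point to handle with care is the equality case, where I must keep the inside and outside links apart and use simplicity twice—both to identify $|N_H(v_a)|$ with $d_H(v_a)$ and to make the size count force the set equality and the disjointness simultaneously.
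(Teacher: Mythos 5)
Your proposal is correct and takes essentially the same route as the paper's proof: part (a) is exactly the non-extendibility contradiction via Observation~\ref{PROP: p0}, and part (b) is the same link-counting argument, namely $N_H(v_a)\subseteq\binom{K_a}{r-1}\cup (E^I_a-\{v_a\})$ with equality characterized as this union being disjoint and attained. The only difference is that you spell out the simplicity/injectivity bookkeeping that the paper's two-line proof leaves implicit.
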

\begin{proof}
(a) is clearly true. Otherwise, $P$ will be an extendible Berge path by Observation~\ref{PROP: p0}.


(b) By (a), for any $e\in E^O_a(P)$, we have $e\setminus\{v_a\}\subseteq K_a(P)$, $a\in\{0,t\}$. So $N_H(v_a)\subseteq \binom{K_a}{r-1}\cup (E^I_a-\{v_a\})$. Thus $d_H(v_a)\le\binom{|K_a|}{r-1}+|E^I_a|$.
The equality holds if and only if $N_H(v_a)$ is the disjoint union of $\binom{K_a}{r-1}$ and $E^I_a-\{v_a\}$.
\end{proof}

The following proposition plays an important role in the proof of the main theorem.
\begin{prop}\label{PROP: set}
Suppose $k\ge r\ge 2$, $t\le 2k$ and $H$ is a connected $n$-vertex $r$-graph with $\ell(H)=t$. Let $P$ be a longest Berge path with $E(P)=\{e_1,...,e_t\}$ and $K(P)=\{v_0,v_1,...,v_t\}$. The following properties hold.

\begin{itemize}
\item[(1)] For $n>t+1$ and $v\notin K(P)$, we have

\begin{itemize}
\item[(1.1)] $\kappa_0, \kappa_t, \kappa_v\subseteq [1,t-1]$, $1\in \varepsilon^i_0$ and $t\in \varepsilon^i_t$;

\item[(1.2)] $(\kappa_0-1)\cap \kappa_t=\emptyset$, $(\kappa_0-1)\cap\kappa_v=\emptyset$, and $(\kappa_{t}+1)\cap\kappa_v =\emptyset$;

\item[(1.3)] $(\varepsilon^i_0-1)\cap \kappa_t=\varepsilon^i_t\cap \kappa_0=\emptyset$;

\item[(1.4)] for $i\in \kappa_0$ and $j\in\kappa_t$, if $i\le j$ then $E_{i-1}^O\cap E_{j+1}^O=\emptyset$, and if $i>j$ then $E_{i-1}^O\cap E_{j-1}^O=\emptyset$ and $E_{i+1}^O\cap E_{j+1}^O=\emptyset$;

\item[(1.5)] for $i\in \kappa_0$ and $j\in\kappa_t$, if $i\le j$ then $v_{j+1}\notin e_{i}$ and $v_{i-1}\notin e_{j+1}$; if $i>j$ then $v_{j-1}\notin e_{i}$ and $v_{j+1}\notin e_{i+1}$.

\item[(1.6)] for $a\in\{0, t\}$, $\kappa_a\cap(\kappa_a+1)\cap \varepsilon_0^i\cap\varepsilon_v^i=\emptyset$.
\end{itemize}

\end{itemize}

\begin{itemize}
\item[(2)]
If $n>2k(r-1)$ and $\delta_1(H)>\binom{k}{r-1}$, then
\begin{itemize}
\item[(2.1)]
$(\varepsilon^i_0-1)\cap \varepsilon^i_t=\emptyset$;

\item[(2.2)]
 $(\varepsilon^i_0-2)\cap \kappa_t=(\varepsilon^i_t+1)\cap \kappa_0=\emptyset$.
 \end{itemize}
 \end{itemize}
\end{prop}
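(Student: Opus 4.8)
The plan is to read every item as the assertion that a particular rerouting of the longest path $P$ is impossible, and to produce that rerouting explicitly whenever the displayed intersection is nonempty. Two elementary certificates drive part (1). First, if a rerouting yields a Berge path of length $t+1$, this contradicts $\ell(H)=t$. Second, if it yields a Berge cycle of length $t+1$, then $c(H)\ge t+1=\ell(H)+1$, so Lemma~\ref{LEM: cycle1} forces $n=c(H)\le\ell(H)+1=t+1$, contradicting $n>t+1$. Thus part (1), which assumes only $n>t+1$ and that $P$ is longest, reduces entirely to exhibiting the right closed walk. Part (2) is different: there the reroutings close up only to length $t$, so I will chain both lemmas. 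A Berge cycle of length $t$ gives $\ell(H)\le c(H)$, whence Lemma~\ref{LEM: cycle2} (this is exactly where $\delta_1(H)>\binom{k}{r-1}$ and $n>2k(r-1)$ are used) gives $c(H)\ge 2k+1$; but $c(H)\le t+1\le 2k+1$ forces $t=2k$ and $c(H)=t+1$, and then Lemma~\ref{LEM: cycle1} gives $n=2k+1$, contradicting $n>2k(r-1)$.

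For (1.1), the memberships $1\in\varepsilon^i_0$ and $t\in\varepsilon^i_t$ are immediate since $\{v_0,v_1\}\subseteq e_1$ and $\{v_{t-1},v_t\}\subseteq e_t$. The inclusions $\kappa_0,\kappa_t\subseteq[1,t-1]$ hold because an outside edge through $v_0$ and $v_t$ would close $P$ into a Berge cycle of length $t+1$ (the second certificate), while $\kappa_v\subseteq[1,t-1]$ is precisely $K_v(P)\cap\{v_0,v_t\}=\emptyset$, which holds by Observation~\ref{PROP: p0} since $P$ is longest. For (1.2) I will use rotations. If $j+1\in\kappa_0$ and $j\in\kappa_t$, splicing $P$ with the outside edge at $v_0,v_{j+1}$ and the outside edge at $v_j,v_t$ closes a Berge cycle through all of $v_0,\ldots,v_t$ (skipping $e_{j+1}$). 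If instead $j+1\in\kappa_0$ and $j\in\kappa_v$, I first rotate along the outside edge at $v_0,v_{j+1}$ to get an equally long path $P'$ with the same key set and endpoint $v_j$; the outside edge joining $v$ to the endpoint $v_j$ then makes $P'$ extendible by Observation~\ref{PROP: p0}, contradicting $\ell(H)=t$. The relation $(\kappa_t+1)\cap\kappa_v=\emptyset$ follows by reversing $P$.

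Items (1.3)--(1.5) are all of one flavour: a nonempty intersection supplies one or two outside edges together with a path edge that, by hypothesis, carries an extra key vertex (e.g.\ $v_0\in e_{j+1}$ in (1.3), or $v_{j+1}\in e_i$ in (1.5)), and I splice the two path segments cut out by the indices $i,j$ with these edges into a single Berge cycle on all $t+1$ key vertices, invoking the second certificate. The regimes $i\le j$ and $i>j$, and the two endpoints $0$ and $t$, give the separate sub-statements, but each pair is conjugate under reversing $P$, so only one representative of each needs a fresh construction. Finally (1.6) is a direct extension: for $a=0$, if $p$ lies in the stated intersection then $v\in e_p$ and there is an outside edge at $v_0,v_p$, so $v,v_{p-1},\ldots,v_0,v_p,\ldots,v_t$ (attaching $v$ via $e_p$, using every path edge and that outside edge) is a Berge path of length $t+1$; the case $a=t$ is symmetric, now using $v_0\in e_p$ and the outside edge at $v_{p-1},v_t$ to build $v,v_0,\ldots,v_{p-1},v_t,\ldots,v_p$.

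For part (2) the same recipe applies with length-$t$ certificates. In (2.1), $v_0\in e_{j+1}$ and $v_t\in e_j$ let me reconnect all the path edges into a Berge cycle of length $t$ omitting $v_j$ as a key vertex; in (2.2), the outside edge at $v_j,v_t$ together with $v_0\in e_{j+2}$ gives a Berge cycle of length $t$ omitting $v_{j+1}$, and $(\varepsilon^i_t+1)\cap\kappa_0=\emptyset$ follows by reversal. Each length-$t$ cycle triggers the Lemma~\ref{LEM: cycle2}/Lemma~\ref{LEM: cycle1} chain described above. The main obstacle throughout is bookkeeping rather than ideas: in every construction I must verify that the closed walk really is a Berge cycle or path, i.e.\ that the at most three outside edges used are distinct from each other and from the path edges (a coincidence only shortens the certificate and is dispatched separately), and I must treat the degenerate index positions -- consecutive or extremal $i,j$, and empty path segments -- so that each rerouting is legitimate. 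Organizing these cases, and correctly extracting the contradiction from a length-$t$ cycle in part (2), is where the care lies.
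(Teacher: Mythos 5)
Your proposal matches the paper's proof essentially step for step: the same rerouting certificates throughout, with length-$(t+1)$ Berge paths/cycles killed via $\ell(H)=t$ and Lemma~\ref{LEM: cycle1} in part (1), and length-$t$ cycles fed through the Lemma~\ref{LEM: cycle2}/Lemma~\ref{LEM: cycle1} chain in part (2), the only cosmetic variation being your rotation-plus-Observation~\ref{PROP: p0} argument for the $\kappa_v$ clauses of (1.2) where the paper writes the longer path directly. Your construction for (1.6) is even slightly more robust than the paper's, since it uses only $p\in\kappa_0$ (resp.\ $p-1\in\kappa_t$) together with $v\in e_p$, whereas the paper's own proof tacitly assumes a single outside edge containing both $v_{p-1}$ and $v_p$, which its hypotheses do not literally supply.
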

\begin{proof}
(1.1) If $t\in \kappa_0$, then there is an edge $e\in E(H)\setminus E(P)$ with $v_0, v_t\in e$. Thus $C=\{e_1,e_2,...,e_t,e\}$ is a Berge cycle of length $t+1$, which implies that $\ell(H)+1\le c(H)$. By Lemma~\ref{LEM: cycle1}, $n=t+1$, a contradiction to $n>t+1$. So $t\notin \kappa_0$. Similarly, $0\notin \kappa_t$. Therefore, $\kappa_0, \kappa_t\subseteq [1,t-1]$.
$\kappa_v\subseteq[1,t-1]$ follows directly from Observation~\ref{PROP: p0}.

By the definitions of $\varepsilon^i_0$ and $\varepsilon^i_t$, we have  $1\in \varepsilon^i_0$ and $t\in \varepsilon^i_t$.

(1.2) Suppose to the contrary that there exists $i\in (\kappa_0-1)\cap \kappa_t\subseteq [1,t-2]$. Then there are edges $e\in E^O_0(P)$ with $v_{i+1}\in e$ and $f\in E^O_t(P)$ with $v_i\in f$.
We claim that  $e\not=f$. Otherwise, we have $t\in \kappa_0$, a contradiction to (1.1). So the cycle $C$ with $E(C)=\{e_1, \ldots, e_i, f, e_t, \ldots, e_{i+2}, e\}$ and $K(C)=\{v_0,  \ldots, v_i, v_t,  \ldots, v_{i+1}\}$ is a Berge cycle of length $t+1$, which again implies that $\ell(H)+1\le c(H)$. By Lemma~\ref{LEM: cycle1}, $n=t+1$, a contradiction too.

Now suppose to the contrary that there is an $i\in (\kappa_0-1)\cap \kappa_v$. Then there are edges $e\in E^O_0(P)$ with $v_{i+1}\in e$ and $f\in E^O_v(P)$ with $v_i\in f$. Note that $e\neq f$ by Observation~\ref{PROP: p0}. So $P'$ with $K(P')=\{v, v_i, v_{i-1}, \ldots, v_0, v_{i+1}, \ldots, v_{2k}\}$ and $E(P)=\{f, e_i, \ldots, e_1, e, e_{i+2}, \ldots, e_{2k}\}$ is a Berge path of length $2k+1$, a contradiction. Similarly, we can show $\kappa_v\cap (\kappa_{t}+1)=\emptyset$.

(1.3) Suppose to the contrary that there is an $i\in(\varepsilon^i_0-1)\cap \kappa_t\subseteq [1,t-1]$. Then $v_0\in e_{i+1}$ and there exists an edge $e\in E^O_t(P)$ with $v_t, v_i\in e$. So the cycle $C$ with $E(C)=\{e_1,\ldots,e_i,e,e_t, \ldots, e_{i+2}, e_{i+1}\}$ and $K(C)=\{v_0, \ldots, v_i, v_t, \ldots, v_{i+1}\}$ is a Berge cycle of length $t+1$. We get  a contradiction with a same reason as in Case (1.1). With similar arguments, we have $\varepsilon^i_t\cap \kappa_0=\emptyset$.

(1.4) Let $e\in E^O_0(P)$ with $v_{i}\in e$ and $f\in E^O_t(P)$ with $v_{j}\in f$. If $i\le j$, suppose that there exists $g\in E_{i-1}^O\cap E_{j+1}^O$, i.e. $g\notin E(P)$ and $v_{i-1}, v_{j+1}\in g$.
We claim that  $e, f, g$ are pairwise different. In fact, if $e=f$ then $t\in \kappa_0$, a contradiction to (1.1); if $e=g$ then $j+1\in\kappa_0$, a contradiction to $(\kappa_0-1)\cap \kappa_t=\emptyset$; if $f=g$ then $i-1\in\kappa_t$, again a contradiction to $(\kappa_0-1)\cap \kappa_t=\emptyset$.    So the cycle $C$ with $E(C)=\{e_1, \ldots, e_{i-1}, g, e_{j+2},\ldots, e_t, f, e_{j}, \ldots, e_{i+1}, e\}$ and $K(C)=\{v_0,  \ldots, v_{i-1}, v_{j+1}, \ldots,  v_t, v_{j}, \ldots, v_{i}\}$ is a Berge cycle of length $t+1$, which again implies that $\ell(H)+1\le c(H)$. By Lemma~\ref{LEM: cycle1}, $n=t+1$, a contradiction too. With similar discussion, we have  $E_{i-1}^O\cap E_{j-1}^O=\emptyset$ for $i>j$.

(1.5) It can be proved similarly as (1.4), just replace $g$ by $e_{i}$.

(1.6) If not, suppose $j\in\kappa_a\cap(\kappa_a+1)\cap \varepsilon_0^i\cap\varepsilon_v^i$, then there is edges $e\notin E(P)$ with $v_{j-1}, v_j\in e$ and $v_a, v\in e_j$. Without loss of generality, assume $a=0$. So  $P'$ with $E(P')=\{e_j, e_1, \ldots, e_{j-1}, e, e_{j+1}, \ldots, e_{t}\}$ and $K(P')=\{v,v_0,v_1, \ldots, v_t\}$ is a Berge path of length $t+1$, a contradiction.

\medskip
Now suppose $n>2k(r-1)$ and $\delta_1(H)>\binom{k}{r-1}$.

(2.1) Suppose to the contrary that there is an $i\in (\varepsilon^i_0-1)\cap \varepsilon^i_t\subseteq[1,t-1]$. Then $v_0\in e_{i+1}$ and $v_t\in e_i$. So the cycle $C$ with $E(C)=\{e_1, \ldots, e_{i}, e_t, e_{t-1},\ldots, e_{i+1}\}$ and $K(C)=\{v_0, \ldots, v_{i-1}, v_t, v_{t-1}, \ldots, v_{i+1}\}$ is a Berge cycle of length $t$, which implies that $\ell(H)\le c(H)=t$. By Lemma~\ref{LEM: cycle2}, $t\ge 2k+1$, a contradiction.

(2.2) Suppose to the contrary that there is an $i\in(\varepsilon^i_0-2)\cap \kappa_t\subseteq [1,t-2]$. Then $v_0\in e_{i+2}$ and there exists an edge $e\in E^O_t(P)$ with $v_t,v_i\in e$. So the cycle $C$  with $E(C)=\{e_1, \ldots, e_i, e, e_t,  \ldots, e_{i+3}, e_{i+2}\}$ and $K(C)=\{v_0, \ldots, v_i, v_t,  \ldots, v_{i+2}\}$ is a Berge cycle of length $t$, a contradiction again. Similarly, we have $(\varepsilon^i_t+1)\cap \kappa_0=\emptyset$.
\end{proof}


\noindent{\bf Remark of Proposition~\ref{PROP: set}:} By the proof of (1), we directly have (1.1), (1.2) and (1.3) still hold for $n=t+1$ provided that $H$ does not contain a Berge Hamiltonian cycle.

\section{The Proof of Theorem~\ref{THM: Main1}}
\subsection{The Proof of Theorems~\ref{THM: Main1} (1)}
We first prove a weak version.
\begin{thm}\label{THM: weak}
Let $k>r\ge 4$ and $n>2k+1$. If $H$ is a connected $n$-vertex $r$-graph  with $\delta_1(H)\ge\binom{k}{r-1}$ then $\ell(H)\ge 2k$. Moreover, if $\ell(H)=2k$ then for any longest Berge path $P$ with $E(P)=\{e_1, e_2,\ldots, e_{2k}\}$ and $K(P)=\{v_0, v_1,\ldots, v_{2k}\}$, we have $|K_0(P)|=|K_{2k}(P)|=k$.
\end{thm}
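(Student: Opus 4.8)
The plan is to reduce the whole statement to a single claim at each endpoint: for a longest Berge path $P$ of length $t=\ell(H)$ with $t\le 2k$, each endpoint satisfies $|K_a(P)|\ge k$ for $a\in\{0,t\}$. Granting this, the theorem follows immediately. Since $t\le 2k$ and $n>2k+1$ we have $n>t+1$, so Proposition~\ref{PROP: set}(1.2) applies and gives $(\kappa_0-1)\cap\kappa_t=\emptyset$; as $\kappa_0-1\subseteq[0,t-2]$ and $\kappa_t\subseteq[1,t-1]$ are disjoint subsets of the $t$-element set $[0,t-1]$, we obtain the budget inequality $|K_0|+|K_t|=|\kappa_0|+|\kappa_t|\le t$. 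If $\ell(H)=t\le 2k-1$, then combining this with $|K_0|,|K_t|\ge k$ yields $2k\le|K_0|+|K_t|\le t\le 2k-1$, a contradiction, so $\ell(H)\ge 2k$; and if $t=2k$, then $2k\le|K_0|+|K_t|\le 2k$ forces $|K_0|=|K_t|=k$, which is the ``moreover'' part.

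It therefore remains to prove $|K_0|\ge k$ for the endpoint $v_0$ (the case $a=t$ is symmetric by reversing $P$). Suppose to the contrary that $|K_0|\le k-1$. The hypothesis gives $d_H(v_0)\ge\binom{k}{r-1}$, while Corollary~\ref{PROP: p1}(b) gives $d_H(v_0)\le\binom{|K_0|}{r-1}+|E^I_0|\le\binom{k-1}{r-1}+|E^I_0|$. Using the Pascal identity $\binom{k}{r-1}=\binom{k-1}{r-1}+\binom{k-1}{r-2}$, these combine to
\[|E^I_0|\ge\binom{k-1}{r-2};\]
in words, an endpoint reaching few key vertices through non-path edges must lie in very many path edges. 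So the task is to bound the number of path edges through an endpoint strictly below $\binom{k-1}{r-2}$.

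The trivial bound $|E^I_0|\le t\le 2k$ already yields the contradiction whenever $2k<\binom{k-1}{r-2}$. A direct check shows this inequality holds for all $k>r\ge 4$ except the three pairs $(r,k)\in\{(4,5),(4,6),(5,6)\}$, where $\binom{k-1}{r-2}$ equals $6,10,10$ while $2k$ equals $10,12,12$; for every other pair the claim $|K_0|\ge k$ follows at once. The three exceptional pairs are the main obstacle, since there one must genuinely rule out an endpoint lying in $\binom{k-1}{r-2}$ of the at most $2k$ path edges.

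To handle them I would exploit the rerouting used throughout the proof of Proposition~\ref{PROP: set}: each index $j\ge 2$ with $v_0\in e_j$ simultaneously produces a Berge cycle of length $j$ (namely $v_0v_1\cdots v_{j-1}v_0$ on $e_1,\dots,e_{j-1},e_j$) and, by reversing the prefix $v_{j-1}\cdots v_0$ and re-attaching it through $e_j$, a new longest Berge path with the same edge set and endpoints $v_{j-1},v_t$. Because $n>t+1$, Lemma~\ref{LEM: cycle1} forbids a Berge cycle of length $t+1$, and feeding these new endpoints $v_{j-1}$ into the disjointness relations (1.1)--(1.6) of Proposition~\ref{PROP: set} should force the set $\{j:v_0\in e_j\}$ to be too sparse to reach the required size $\binom{k-1}{r-2}$ for these few small values of $r$ and $k$. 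I expect this sparsity analysis for the three exceptional pairs to be the only genuinely delicate point; everything else is the clean reduction above.
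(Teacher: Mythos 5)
Your reduction and your generic-case argument are sound: assuming $t=\ell(H)\le 2k$, proving $|K_a(P)|\ge k$ at both endpoints does yield the theorem via the budget inequality $|K_0|+|K_t|\le t$, and your Pascal-identity step correctly shows that $|K_0|\le k-1$ forces $|E^I_0|\ge\binom{k-1}{r-2}$, which contradicts the trivial bound $|E^I_0|\le t\le 2k$ except for $(r,k)\in\{(4,5),(4,6),(5,6)\}$. This is essentially the paper's skeleton, except that the paper bounds $|E^I_0|\le t-|\kappa_t|$ using Proposition~\ref{PROP: set}(1.3) rather than $|E^I_0|\le t$, and its convexity computation then leaves only one exceptional pair, $(k,r)=(5,4)$. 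Indeed, two of your three exceptions close immediately once you invoke (1.3): if $|K_0|\le k-1$ then $|\varepsilon^i_0|\ge\binom{k-1}{r-2}=10$ in both cases $(r,k)=(4,6)$ and $(5,6)$, so $|\kappa_t|\le t-|\varepsilon^i_0|\le 12-10=2$, whence $d_H(v_t)\le\binom{2}{r-1}+|E^I_t|\le 0+12<\binom{6}{r-1}$, a contradiction.

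The genuine gap is the remaining pair $(r,k)=(4,5)$ --- exactly the paper's hard case --- and the mechanism you propose for it cannot work as stated. You hope that rotations (reversing a prefix of $P$ through an edge $e_j\ni v_0$) combined with the disjointness relations (1.1)--(1.6) will force $\{j:v_0\in e_j\}$ to have fewer than $\binom{k-1}{r-2}=6$ elements. But the paper's analysis of $(k,r)=(5,4)$ shows this sparsity target is unattainable: there the equality case forces $t=10$, $|K_0|=|K_{10}|=4$, $\kappa_0=\kappa_{10}=\{2,4,6,8\}$ and $\varepsilon^i_0=\{1,2,4,6,8,10\}$ --- a set of size exactly $6$, containing consecutive indices, and internally consistent with all the index-set relations and rotation-derived constraints. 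No contradiction exists at the level of index sets alone. The paper instead needs roughly a page of additional structural work: it first proves (via a path-exchange argument, Claim~\ref{CLA: c2}) that $\kappa_0$ and $\kappa_{10}$ contain no consecutive integers, pins down the configuration above, then uses the equality case of Corollary~\ref{PROP: p1}(b) to conclude $N_H(v_0)\supseteq\binom{K_0}{3}$, manufactures the new edge $e'=\{v_0,v_2,v_4,v_6\}\in E^O_0(P)$, builds from it a Berge cycle of length $11=t+1$, and only then obtains a contradiction from Lemma~\ref{LEM: cycle1} together with $n>2k+1=11$. So your proof is complete for all $k>r\ge 4$ except $(r,k)=(4,5)$, and for that pair the sketched sparsity analysis must be replaced by a structural argument of the paper's kind.
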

\begin{proof}
Let $P$ be a longest Berge path in $H$ with $E(P)=\{e_1,...,e_{t}\}$ and $K(P)=\{v_0,v_1,...,v_{t}\}$.
By (b) of Corollary~\ref{PROP: p1}, $d_H(v_a)\le \binom{|K_a|}{r-1}+|E^I_a|$ for $a\in\{0,t\}$.
We prove by contradiction. Suppose to the contrary that $t=\ell(H)\le 2k$. By (1.1) and (1.2) of Proposition~\ref{PROP: set},
\begin{equation*}
|K_0|+|K_t|=|\kappa_0-1|+|\kappa_t|=|(\kappa_0-1)\cup \kappa_t|\le|[0,t-1]|=t\le 2k.
\end{equation*}
Similarly, by (1.3) of Proposition~\ref{PROP: set}, we get
\begin{equation}\label{EQU: e1}
|E^I_0|=|\varepsilon^i_0|=|\varepsilon^i_0-1|\le t-|\kappa_t|\le 2k-|\kappa_t|
\end{equation} and $|E^I_t|=|\varepsilon^i_t|\le t-|\kappa_0|\le 2k-|\kappa_0|$.
Without loss of generality, assume $|K_0|\le|K_t|$. Then $|K_0|\le k$ and
\begin{eqnarray}\label{EQN:e3}
\binom{k}{r-1}&\le& d_H(v_0)\le\binom{|K_0|}{r-1}+|E^I_0|\nonumber\\
              &\le&\binom{|K_0|}{r-1}+2k-|K_t|\nonumber\\
              &\le&\binom{|K_0|}{r-1}+2k-|K_0|\mbox{.}
\end{eqnarray}

If $|K_0|=k$ then  $|K_t|=k$ and $t=2k$, we are done.
So now we assume $|K_0|<k$.

\medskip
\noindent\textbf{Case 1.}  $(k,r)\neq(5,4)$.

By the convexity of the function $f(x)=\binom{x}{r-1}+2k-x$ for $x\in [0,k-1]$, we have
\begin{eqnarray}\label{EQN: e2}
\binom{k}{r-1}&\le& \binom{|K_0|}{r-1}+2k-|K_0|\nonumber\\
              &\le&\max\{f(0),f(k-1)\}=\binom{k-1}{r-1}+k+1\nonumber\\
                            &=&\binom{k}{r-1}+\left(k+1-\binom{k-1}{r-2}\right).
\end{eqnarray}
Thus $k+1-\binom{k-1}{r-2}\ge 0$. But $k+1-\binom{k-1}{r-2}\le k+1-\binom{k-1}{2}=k(5-k)/2<0$ when $(k,r)\not=(5,4)$, a contradiction.

\noindent\textbf{Case 2.}  $(k,r)=(5,4)$.

Then all the equalities hold in the inequalities~(\ref{EQU: e1}), (\ref{EQN:e3}) and (\ref{EQN: e2}), that is $d_H(v_a)=\binom{5}{3}=10$, $t=2k=10$, $|K_t|=|K_0|=k-1=4$, and  by (b) of Corollary~\ref{PROP: p1},  $N_H(v_a)=\binom{K_a}{3}\cup (E^I_a-\{v_a\})$ for $a\in \{0, 10\}$.
So $|E^I_a|=|E^I_{a}-\{v_a\}|=d_H(v_a)-|\binom{K_a}{3}|=6$. By (1.1), (1.2) and (1.3) of Proposition~\ref{PROP: set}, $\kappa_0, \kappa_{10}\subseteq [1,9]$, $(\kappa_0-1)\cap \kappa_{10}=\emptyset$, and $(\varepsilon^i_0-1)\cap \kappa_{10}=\varepsilon^i_{10}\cap \kappa_0=\emptyset$. Therefore,  $(\varepsilon^i_0-1)\cup \kappa_{10}=(\varepsilon^i_{10}-1)\cup (\kappa_0-1)=[0,9]$. So  $\kappa_0-1\subseteq [0,9]\setminus \kappa_{10}=\varepsilon^i_0-1$.

\begin{claim}\label{CLA: c2}
$\kappa_0\cap(\kappa_0-1)=\emptyset$ and $\kappa_0\subseteq [2,9]$. Similarly, $\kappa_{10}\cap(\kappa_{10}-1)=\emptyset$ and $\kappa_{10}\in[1,8]$.
\end{claim}
If $\kappa_0\cap (\kappa_0-1)\neq\emptyset$, say $i\in \kappa_0\cap(\kappa_0-1)$, then $i-1,i\in \kappa_0-1\subseteq\varepsilon^i_0-1$. So $i, i+1\in \kappa_0\subseteq \varepsilon^i_0$ and $i+1\not\in \varepsilon^i_{10}$. Let $\kappa_0=\{i, i+1, j_1, j_2\}$. Then $e'=\{v_0,v_i,v_{i+1},v_{j_1}\}\in E_0^O(P)$. Since $\binom{K_0}{3}\cap (E^I_0-\{v_0\})=\emptyset$, $e_{i+1}\setminus\{v_0\}\not\in \binom{K_0}{3}$. Assume $e_{i+1}=\{v_0,v_i,v_{i+1},u\}$. Then $u\not= v_{j_1}, v_{j_2}, v_{10}$. If $u\notin K(P)$ then the path $P'$ with $K(P')=\{u,v_0,...,v_{10}\}$ and $E(P')=\{e_{i+1},e_1, \ldots, e_{i}, e', e_{i+2}, \ldots, e_{10}\}$ is a Berge path of length $11$, a contradiction. So we assume $u\in K(P)$. Then the path $P''$ with $K(P'')=K(P)$ and $E(P'')=(E(P)\setminus\{e_{i+1}\})\cup\{e'\}$ is also a Berge path of length $10$. But $K_0(P'')=\{v_i, v_{i+1},  v_{j_1}, v_{j_2}, u\}$ and $E^I_{10}(P'')=E^I_{10}(P)$. By (1.3) of Proposition~\ref{PROP: set}, $|K_0(P'')\cup E^I_{10}(P'')|=|K_0(P'')|+|E^I_{10}(P'')|=11$, but $|K_0(P'')\cup E^I_{10}(P'')|\le |[1,10]|=10$, this is a contradiction.

Now suppose $1\in \kappa_0$. Denote $\kappa_0=\{1, j_1, j_2, j_3\}$. Then $e'=\{v_0, v_1, v_{j_1}, v_{j_2}\}\in E^O_0(P)$. Assume $e_1=\{v_0, v_1, u,v\}$. With a similar discussion with the above case, if $e_1\nsubseteq K(P)$ then we have a Berge path of length 11 by adding $e'$ to $P$; if $e_1\subseteq K(P)$ then we obtain a Berge path $Q$ of length 10 with $|K_0(Q)|>|K_0(P)|$ and $E^I_{10}(Q)=E^I_{10}(P)$ by replacing $e_1$ by $e'$. In each case, we get a contradiction.
Therefore, $\kappa_0\cap(\kappa_0-1)=\emptyset$ and $\kappa_0\subseteq [2,9]$. Similarly, we have $\kappa_{10}\cap(\kappa_{10}-1)=\emptyset$ and $\kappa_{10}\subseteq [1,8]$.

By Claim~\ref{CLA: c2}, $\kappa_0-1, \kappa_{10}\subseteq [1,8]$ and $\kappa_a-1$ contains no consecutive integers for $a\in\{0,1\}$. Since $(\kappa_0-1)\cap \kappa_{10}=\emptyset$ and $|\kappa_0|=|\kappa_{10}|=4$, we have $(\kappa_0-1)\cup \kappa_{10}=[1,8]$. This forces that $\kappa_0=\kappa_{10}=\{2,4,6,8\}$. So $\varepsilon^i_0=[0,9]\setminus \kappa_{10}+1=\{1,2,4,6,8,10\}$ and $\varepsilon^i_{10}=[0,9]\setminus(\kappa_0-1)+1=\{1,3,5,7,9,10\}$. This implies that $\{v_0, v_1, v_2\}\subseteq e_2$ and $\{v_1, v_{10}\}\subseteq e_1$. Set $e'=\{v_0, v_2, v_4, v_6\}$. Then $e'\in E^O_0(P)$ because of $\binom{K_0}{3}\subseteq N_H(v_0)$.  Therefore, the cycle $C$  with $E(C)=\{e', e_3, \ldots, e_{10}, e_1, e_2\}$ and $K(C)=\{v_0, v_2, v_3, \ldots, v_{10}, v_1\}$ is a Berge cycle of length $11$. Since $c(H)\le \ell(H)+1=11$, we have $c(H)=11$. By Lemma~\ref{LEM: cycle1}, $n=c(H)=11$, a contradiction to $n>2k+1=11$.
\end{proof}

The following theorem characterize the extremal graphs in Theorem~\ref{THM: Main1} (1).
\begin{thm}\label{THM: 2k}
For $k\ge r\ge 2$, $n>2k+1$ and $d\in\{\binom{k}{r-1},\binom{k}{r-1}+1\}$, let $H$ be a connected $n$-vertex $r$-graph  with $\delta_1(H)\ge d$. If
$\ell(H)=2k$ and for any longest Berge path $P$ in $H$, we have $|\kappa_0(P)|=|\kappa_{2k}(P)|=k$, then $d=\binom{k}{r-1}$ and either $S'_r(n,k)\subseteq H\subseteq S_r(n,k)$ or $H\cong S(sK_{k+1}^{(r)},1)$ with $n=sk+1$.
\end{thm}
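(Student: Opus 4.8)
The plan is to fix a longest Berge path $P$ with $K(P)=\{v_0,\dots,v_{2k}\}$ and $E(P)=\{e_1,\dots,e_{2k}\}$ and first extract the rigid ``index structure'' forced by the hypothesis $|\kappa_0(P)|=|\kappa_{2k}(P)|=k$. By parts (1.1)--(1.2) of Proposition~\ref{PROP: set} we have $\kappa_0,\kappa_{2k}\subseteq[1,2k-1]$ and $(\kappa_0-1)\cap\kappa_{2k}=\emptyset$; since $|\kappa_0-1|=|\kappa_{2k}|=k$ and both sit inside the $2k$-element set $[0,2k-1]$, they partition it, so $(\kappa_0-1)\cup\kappa_{2k}=[0,2k-1]$ with $1\in\kappa_0$ and $2k-1\in\kappa_{2k}$. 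Feeding this back into (1.3) gives $\varepsilon^i_0\subseteq\kappa_0$ and $\varepsilon^i_{2k}\subseteq\kappa_{2k}+1$, and feeding the partition into (1.2) for a vertex $v\notin K(P)$ (which exists because $n>2k+1=|K(P)|$) yields the useful containment $\kappa_v\subseteq\kappa_0\cap\kappa_{2k}$.

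First I would carry out the endpoint degree computation, which simultaneously forces $d=\binom{k}{r-1}$ and produces complete star neighborhoods. By Corollary~\ref{PROP: p1}(b), $N_H(v_0)\subseteq\binom{K_0}{r-1}\cup(E^I_0-\{v_0\})$. The crucial claim is that every path-edge through $v_0$ lies inside $K_0\cup\{v_0\}$, whence $N_H(v_0)\subseteq\binom{K_0}{r-1}$ and so $d_H(v_0)\le\binom{k}{r-1}$. To prove this claim I assume some $e_i\ni v_0$ contains a vertex $w\notin K_0\cup\{v_0\}$ and split on whether $w\notin K(P)$ or $w=v_j$ with $j\notin\kappa_0$; in each subcase I reroute $P$ (deleting $e_i$ and inserting one or two edges, exactly in the style of the proofs of Proposition~\ref{PROP: set} and Theorem~\ref{THM: weak}) to obtain either a Berge path of length $2k+1$ or a Berge cycle of length $2k+1$, the latter contradicting $n>2k+1$ via Lemma~\ref{LEM: cycle1}. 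Together with $d_H(v_0)\ge d\ge\binom{k}{r-1}$ this gives $d_H(v_0)=\binom{k}{r-1}$, $N_H(v_0)=\binom{K_0}{r-1}$, and $d=\binom{k}{r-1}$; the same argument applies to $v_{2k}$ and, since the size hypothesis is assumed for \emph{every} longest path, to the endpoints of any rerouted longest path.

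Next I would globalize, using $D:=K_0\cap K_{2k}$ as the candidate core. The containment $\kappa_v\subseteq D$ says every off-path vertex attaches to $P$ only inside $D$, while star-completeness says each degree-$\binom{k}{r-1}$ vertex $u$ is complete to a $k$-set $K_u$ with $N_H(u)=\binom{K_u}{r-1}$. The heart of the argument is to show that these $k$-sets interact in exactly one of two ways: either they all coincide with a single set $A$, or any two of them meet in one common vertex $c$. In the first case all edges of $S'_r(n,k)$ are present and every edge meets $V(H)\setminus A$ in at most one vertex, giving $S'_r(n,k)\subseteq H\subseteq S_r(n,k)$; in the second case every vertex lies on a copy of $K^r_{k+1}$ through $c$, these copies are otherwise vertex-disjoint, and a vertex count pins down $n=sk+1$, giving $H\cong S(sK_{k+1}^{(r)},1)$.

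The hard part will be the rerouting bookkeeping in the last two steps. Both the ``no escaping vertex'' claim of the second step and the coincide-or-sunflower dichotomy of the third require one to track precisely which path-edges are reused when a hypothetical extra edge is inserted, and then to close each configuration by exhibiting a Berge path or a Berge cycle of length $2k+1$ and appealing to Lemma~\ref{LEM: cycle1}; note that, unlike in Proposition~\ref{PROP: set}(2), the stronger hypotheses of Lemma~\ref{LEM: cycle2} are unavailable here (we have only $n>2k+1$ and $\delta_1\ge\binom{k}{r-1}$), so every reduction must target length $2k+1$ rather than a Berge cycle of length $2k$. The most delicate point is proving the dichotomy is exhaustive: ruling out any intermediate overlap in which two of the core $k$-sets share more than one but fewer than $k$ vertices, so that the only surviving possibilities are the two listed extremal families.
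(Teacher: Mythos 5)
Your proposal diverges from the paper's proof (which never computes $d_H(v_0)$ at all, but instead splits into the ``crossing'' case $\max(\kappa_0-1)>\min\kappa_{2k}$ and the ``non-crossing'' case, getting all structure from off-path vertices and an induction on $s$), and it has a genuine gap at its very first main step. You claim that every path-edge $e_i$ containing $v_0$ satisfies $e_i\subseteq K_0\cup\{v_0\}$. The problematic vertex is $v_{i-1}\in e_i$ itself: by the partition $[1,2k]=\kappa_0\sqcup(\kappa_{2k}+1)$, the statement ``$v_{i-1}\in K_0\cup\{v_0\}$'' is exactly ``$i-2\notin\kappa_{2k}$'', so your claim is equivalent to $(\varepsilon^i_0-2)\cap\kappa_{2k}=\emptyset$, which is precisely item (2.2) of Proposition~\ref{PROP: set}. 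That item is proved in the paper only under the stronger hypotheses $n>2k(r-1)$ and $\delta_1(H)>\binom{k}{r-1}$, because the rerouting there yields a Berge cycle of length $2k$ and one needs Lemma~\ref{LEM: cycle2} to turn that into a contradiction; under the present hypotheses ($n>2k+1$, $\delta_1\ge\binom{k}{r-1}$) a $2k$-cycle is harmless. You say you will instead target length $2k+1$, but in the offending configuration the available edges are $e\in E^O_0$ with $v_i\in e$, $f\in E^O_{2k}$ with $v_{i-2}\in f$, and $e_i\supseteq\{v_0,v_{i-1},v_i\}$; the natural reroute closes the cycle $v_0,\dots,v_{i-2},v_{2k},v_{2k-1},\dots,v_i,v_0$ of length exactly $2k$, missing $v_{i-1}$, and the only unused edges incident to $v_{i-1}$ are $e_{i-1}$ (to $v_{i-2}$) and $e_i$ (to $v_0,v_i$), which cannot insert $v_{i-1}$ into that cycle (insertion between $v_0$ and $v_i$ would need $e_i$ twice; insertion next to $v_{i-2}$ needs an edge on $\{v_{i-1},v_{2k}\}$ that need not exist). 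So the claim cannot be established by the local rerouting you describe; it is only true a posteriori, once the global structure is known.

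The second gap is that your ``coincide-or-sunflower'' dichotomy is false as stated, so the globalization cannot go through in that form. In $S(sK_{k+1}^{(r)},1)$ with center $c$, two non-center vertices $u,u'$ lying in the \emph{same} copy of $K^{(r)}_{k+1}$ have $K_u=(\mbox{copy})\setminus\{u\}$ and $K_{u'}=(\mbox{copy})\setminus\{u'\}$, hence $|K_u\cap K_{u'}|=k-1$, which for $k\ge 3$ is strictly between $1$ and $k$. Thus the ``intermediate overlaps'' you propose to rule out actually occur in one of the two extremal families, and ruling them out would refute the theorem itself; the dichotomy has to be phrased about the closed sets $K_u\cup\{u\}$ (all containing a common $k$-set versus pairwise meeting in one common vertex), or, as the paper does, about the index structure of $\kappa_0$ and $\kappa_{2k}$. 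A further, related issue: star-completeness $N_H(u)=\binom{K_u}{r-1}$ could at best be derived for vertices occurring as \emph{endpoints} of longest Berge paths, but not every vertex of degree $\binom{k}{r-1}$ is such an endpoint (in $S_r(n,k)$ no core vertex ever is), so ``each degree-$\binom{k}{r-1}$ vertex is complete to a $k$-set'' is not justified by your first step. The paper avoids all three problems by working with off-path vertices, where Corollary~\ref{PROP: p1}(a) and your (correct) containment $\kappa_v\subseteq\kappa_0\cap\kappa_{2k}$ do give complete control, and by handling the $S(sK^{(r)}_{k+1},1)$ case via induction on $s$.
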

\begin{proof}
Let $P$ be a a longest Berge path in $H$ with $E(P)=\{e_1, \ldots, e_{2k}\}$ and $K(P)=\{v_0, v_1, \ldots, v_{2k}\}$.
By (1.1) and (1.2) of  Proposition~\ref{PROP: set},  $\kappa_0-1\subseteq [0, 2k-2], \,  \kappa_{2k}\subseteq[1,2k-1]$ and $(\kappa_0-1)\cap \kappa_{2k}=\emptyset$. By Theorem~\ref{THM: weak},  $|\kappa_0-1|=|\kappa_{2k}|=k$. Thus $(\kappa_0-1)\cup \kappa_{2k}=[0,2k-1]$ and $0\in \kappa_0-1, \, 2k-1\in\kappa_{2k}$.

\begin{claim}\label{CLA: c4}
$\kappa_v\cap (\kappa_v-1)=\emptyset$ for any $v\notin K(P)$. Furthermore, $K(P)=V(P)$.
\end{claim}
Suppose to the contrary that there is an $i\in\kappa_v\cap (\kappa_v-1)$ for some $v\notin K(P)$. Then $i,i+1\in \kappa_v$. Since $i\in[0,2k-1]=(\kappa_0-1)\cup \kappa_{2k}$, we have either $i\in \kappa_{0}-1$ or $i\in \kappa_{2k}$. But this is impossible since  $\kappa_v\cap(\kappa_0-1)=\emptyset$ or $\kappa_v\cap(\kappa_{2k}+1)=\emptyset$ from  (1.2) of Proposition~\ref{PROP: set}. Now suppose there is a vertex $v\in V(P)\setminus K(P)$. Suppose $v\in e_{j+1}$. Then $v_j, v_{j+1}\in K_v(P)$, i.e. $j, j+1\in \kappa_v(P)$. This is impossible since $\kappa_v\cap (\kappa_v-1)=\emptyset$.

\medskip
\noindent\textbf{Case 1.} $\max(\kappa_0-1)>\min\kappa_{2k}$. 

Since $(\kappa_0-1)\cup \kappa_{2k}=[0,2k-1]$, we  can pick $1\le i_0\le 2k-3$ such that $i_0\in \kappa_{2k}$ and $i_0+1\in \kappa_0-1$. Thus $i_0\in \kappa_{2k}$ and $i_0+2\in \kappa_0$. Let  $e_0\in E^O_{0}(P)$ and $f_0\in E^O_{2k}(P)$ be the edges such that $v_0, v_{i_0+2}\in e_0$ and $v_{2k}, v_{i_0}\in f_0$.

Since $V(P)=K(P)$, $n>2k+1=|V(P)|$ and $H$ is connected, we have $V(H)\setminus V(P)\not=\emptyset$ and $E(H)\setminus E(P)\not=\emptyset$.
\begin{claim}\label{CLA: c3}
For any $v\in V(H)\setminus V(P)$ and any $e\in E(H)$ with $v\in e$, $e\setminus\{v\}\subseteq V(P)$.
\end{claim}
Let $v\in V(H)\setminus V(P)$. We first claim that: ($\ast$) there is no Berge path $Q$ of length at least two connecting $v$ and some vertex of $P$ such that $E(Q)\cap E(P)=\emptyset$ and $|K(Q)\cap V(P)|=1$. Suppose to the contrary that there is such a Berge path $Q$  connecting $v$ and some vertex $v_j\in V(P)$.  Then $|E(Q)|\ge 2$. Set $K(Q)=\{x, \ldots, v_j\}$ such that $K(Q)\cap V(P)=\{v_j\}$.
If $j\le i_0$, then  $P'$ with $K(P')=K(Q)\cup\{v_{j+1},...,v_{i_0}, v_{2k}, \ldots, v_{i_0+2}, v_0, \ldots, v_{j-1}\}$ and $E(P')=E(Q)\cup \{e_{j+1},\ldots, e_{i_0}, f_0, e_{2k}, \ldots,$ $ e_{i_0+3}, e_0, e_{1}, \ldots, e_{j-1}\}$ is a Berge path of at least $2k+1$, a contradiction. If $j=i_0+1$, then $P'$ with $K(P')=K(Q)\cup \{v_{i_0+2}, v_0, \ldots, v_{i_0}, v_{2k}, \ldots, v_{i_0+3}\}$ and $E(P')=E(Q)\cup \{e_{i_0+2}, e_0, e_1, \ldots, e_{i_0}, f_0, e_{2k}, \ldots, e_{i_0+4}\}$ is a Berge path of length at least $2k+2$, a contradiction. So assume $j\ge i_0+2$. Then $P'$ with $K(P')=K(Q)\cup\{v_{j-1},...,v_{i_0+2}, v_0, \ldots,$ $ v_{i_0}, v_{2k}, \ldots, v_{j+1}\}$ and $E(P')=E(Q)\cup \{e_{j-1},\ldots, e_{i_0+3}, e_0, e_1, \ldots, e_{i_0}, f_0, e_{2k}, \ldots, e_{j+2}\}$ is a Berge path of length at least $2k+1$, a contradiction too. ($\ast$) holds.
The claim ($\ast$) also implies that for any edge $e\in E(H)$ with $v\in e$, $e\cap V(P)\not=\emptyset$.
Now suppose  that there is a vertex $w\in e\setminus\{v\}$ with $w\notin V(P)$. Choose an edge $f$ with $w\in f$ ($f$ exists since $d_H(w)\ge d\ge 2$). So $f\cap V(P)\not=\emptyset$. Assume $v_j\in f\cap V(P)$.
Then $Q$ with $K(Q)=\{v,w, v_j\}$ and $E(Q)=\{e,f\}$ is a Berge path of length two connecting $v$ and $v_j$. Clearly, $E(Q)\cap E(P)=\emptyset$ and $|K(Q)\cap V(P)|=1$. By ($\ast$), this is impossible.

Now choose $v\notin K(P)$.
By Claim~\ref{CLA: c3}, we have $e\setminus\{v\}\subseteq K_v(P)$ for every $e$ with $v\in e$. Therefore,
$$d_H(v)\le\left|\binom{K_v(P)}{r-1}\right|=\binom{|\kappa_v(P)|}{r-1}\mbox{.}$$
Since $\kappa_v(P)\cap (\kappa_v(P)-1)=\emptyset$ and $\kappa_v(P)\subseteq[1,2k-1]$, we get $|\kappa_v(P)|\le k$ and the equality holds if and only if $\kappa_v(P)=\{1,3,5, \ldots, 2k-1\}$, which also implies that $\kappa_0=\kappa_{2k}=\{1,3,5,\ldots, 2k-1\}$ by (1.2) of Proposition~\ref{PROP: set}.
Since $d_H(v)\ge \delta_1(H)\ge\binom{k}{r-1}$, we have $|\kappa_v(P)|=k$. Therefore, $N_H(v)=\binom{K_v(P)}{r-1}=\binom{\{v_1,v_3,...,v_{2k-1}\}}{r-1}$ for all $v\in V(H)\setminus V(P)$.
For  $v_{2i}\in\{v_0, v_2,\ldots, v_{2k}\}$, choose $v\in V(H)\setminus V(P)$. Since $N_H(v)=\binom{\{v_1,v_3,...,v_{2k-1}\}}{r-1}$, there are two distinct  edges $e', e''\in E(H)$ such that $v_{2i-1}, v\in e'$ and $v, v_{2i+1}\in e''$.
So,  by replacing $v_{2i}$ with $v$ and $e_{2i}, e_{2i+1}$ with $e', e''$ in $P$, we get a new Berge path $P'$ of length $2k$. By the symmetry of $v_{2i}$ and $v$, we have $N_H(v_{2i})=\binom{\{v_1,v_3,...,v_{2k-1}\}}{r-1}$. Therefore for all $v\in V(H)\setminus\{v_1,v_3,...,v_{2k-1}\}$, $N_H(v)=\binom{\{v_1,v_3,...,v_{2k-1}\}}{r-1}$. This implies that $S'_r(n,k)\subseteq H\subseteq S_r(n,k)$.

\medskip
\noindent\textbf{Case 2.} For any longest Berge path $P$ in $H$ with $E(P)=\{e_1, \ldots, e_{2k}\}$ and $K(P)=\{v_0, v_1, \ldots, v_{2k}\}$, $\max (\kappa_0(P)-1)<\min \kappa_{2k}(P)$.

Fix a longest Berge path $P$  with $E(P)=\{e_1, \ldots, e_{2k}\}$ and $K(P)=\{v_0,  \ldots, v_{2k}\}$ in $H$.
Since $(\kappa_0-1)\cap \kappa_{2k}=\emptyset$, $(\kappa_0-1)\cup \kappa_{2k}=[0,2k-1]$ and $|\kappa_0|=|\kappa_{2k}|=k$, we have $\kappa_0-1=[0,k-1]$ and $\kappa_{2k}=[k,2k-1]$. By Claim~\ref{CLA: c4}, we have $\kappa_v\cap(\kappa_v-1)=\emptyset$ for any $v\notin K(P)$ and $V(P)=K(P)$.

Since  $H$ is connected and $n>2k+1$, we have $V(H)\setminus V(P)\not=\emptyset$ and $E(H)\setminus E(P)\not=\emptyset$. We claim that for every edge $e\in E^O_v(P)$ with $v\notin V(P)$, if $e\cap V(P)\neq\emptyset$ then $e\cap V(P)=\{v_{k}\}$. Suppose there is $v_i\in e\cap V(P)$ for some $i\neq k$. Then $i\in\kappa_v(P)$. If $i<k$ then $i\in[0,k-1]= \kappa_0-1$, a contradiction to $\kappa_v\cap(\kappa_0-1)=\emptyset$ ((1.2) of Proposition~\ref{PROP: set}). Now assume $i>k$. Then $i-1\in[k,2k-1]=\kappa_{2k}$, a contradiction to $\kappa_v\cap(\kappa_{2k}+1)=\emptyset$ ((1.2) of Proposition~\ref{PROP: set}).
The claim follows.

Since $n>2k+1$, there exists integer $s\ge 3$ such that $(s-1)k+2\le n\le sk+1$.
In the following we will show $H\cong S(sK_{k+1}^{(r)},1)$ with $v_k$ as the center vertex by induction on $s$. For the base case $s=3$, denote $V(H)=V(P)\cup R=\{v_0,v_1,...,v_{2k}\}\cup R$.
So $|R|=n-2k-1\le k$. And for any $v\in R$, by the above claim, we have $N_H(v)\subseteq\binom{(R\setminus\{v\})\cup\{v_k\}}{r-1}$. Since $d_H(v)\ge \binom{k}{r-1}$ and $|(R\setminus\{v\})\cup\{v_k\}|\le k$, we have $|R|=k$ and $N_H(v)=\binom{(R\setminus\{v\})\cup\{v_k\}}{r-1}$. To show $H\cong S(3K_{k+1}^r, 1)$, it is sufficient to show that $H[V(P)]\cong S(2K_{k+1}^r, 1)$.
For $i\in[0,k-1]=\kappa_0-1$, let $e\in E(H)$ with $v_i\in e$, we claim that $e\subseteq \{v_0, \ldots, v_k\}$. If not, suppose there is $v_j\in e$ with $j\in [k+1, 2k]=\kappa_{2k}+1$. If $e\notin E(P)$ then $e\in E_i^O(P)\cap E_j^O(P)$. This is impossible since   by (1.4) of Proposition~\ref{PROP: set}, for $i+1\in\kappa_0$, $j-1\in\kappa_{2k}$ and $i+1\le j-1$, we have $E_i^O(P)\cap E_j^O(P)=\emptyset$. Now suppose $e\in E(P)$. Let $e=e_h$ for some $h\in [1,2k]$.
If $h\in[1,k]=\kappa_0$, by (1.5) of Proposition~\ref{PROP: set}, $v_{s+1}\notin e_h$ for any $s\in\kappa_{2k}$, a contradiction to $v_j\in e$ since $j-1\in\kappa_{2k}$. If $h\in[k+1, 2k]=\kappa_{2k}+1$, then $h-1\in\kappa_{2k}$. By (1.5) of Proposition~\ref{PROP: set}, $v_{s-1}\notin e_h$ for any $s\in\kappa_{0}$, a contradiction to $v_i\in e$ since $i+1\in\kappa_{0}$. With similar discussion, we have for $i\in[k+1, 2k]=\kappa_{2k}+1$ and all edges $e\in E(H)$ with $v_i\in e$, $e\subseteq \{v_k, \ldots, v_{2k}\}$. Therefore, $N_{H}(v_i)\subseteq\binom{\{v_0, \ldots, v_{k}\}\setminus\{v_i\}}{r-1}$ for $i\in[0,k-1]$, and $N_{H}(v_i)\subseteq\binom{\{v_k, \ldots, v_{2k}\}\setminus\{v_i\}}{r-1}$ for $i\in[k+1,2k]$. Since $\delta_1(H)\ge \binom{k}{r-1}$, we have all '$\subseteq$'s are '$=$'s. So $H[V(P)]\cong S(2K_{k+1}^r, 1)$ with $v_k$ as the center vertex.

Now assume the statement holds for $s\ge 3$. For $s+1$, let $H'=H-\{v_0,  \ldots, v_{k-1}\}$. Then $|V(H')|\in[(s-1)k+2, sk+1]$ and $\delta_1(H')\ge d$ since  $d_{H'}(v_k)>\binom{|\{v_{k+1},\ldots,v_{2k}\}|}{r-1}=\binom{k}{r-1}$ and the degrees of the rest vertices remain unchanged in $H'$. Since $H'$ is the subgraph of $H$ and $\ell(H)=2k$, we have $\ell(H')=2k$ and for any longest Berge path $Q$ in $H'$ with $K(Q)=\{u_0, u_1, \ldots, u_{2k}\}$, $\max (\kappa_0(Q)-1)<\min \kappa_{2k}(Q)$ (in fact, it is easy to show that $u_k=v_k\in K(Q)$ by the connectivity of $H'$).
By induction hypothesis, we have $|V(H')|=sk+1$ and $H'\cong S(sK_{k+1}^{(r)},1)$ with $v_k$ as the center,  which implies $H\cong S((s+1)K_{k+1}^{(r)},1)$ with the center $v_k$. We are done.
\end{proof}


Theorem~\ref{THM: Main1} (1) follows from Theorems~\ref{THM: weak} and \ref{THM: 2k} immediately. The following  corollary of Theorem~\ref{THM: 2k} will be used in the proof of Theorem~\ref{THM: Main1} (2).
\begin{cor}\label{COR: kk}
For $k\ge r\ge 2$ and $n>2k+1$, let $H$ be a connected $n$-vertex $r$-graph  with $\delta_1(H)>\binom{k}{r-1}$. If $\ell(H)=2k$ then for any longest Berge path $P$ in $H$, $|\kappa_0(P)|=|\kappa_{2k}(P)|=k$ does not hold.
\end{cor}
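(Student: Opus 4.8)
The plan is to obtain this as an immediate contrapositive of Theorem~\ref{THM: 2k}, exploiting that the genuinely useful part of that theorem's conclusion is the \emph{exact} value $d=\binom{k}{r-1}$ forced on the minimum degree in the extremal configuration. So I would argue by contradiction. Suppose the statement fails, i.e.\ that \emph{every} longest Berge path $P$ in $H$ satisfies $|\kappa_0(P)|=|\kappa_{2k}(P)|=k$. This is word-for-word the hypothesis ``for any longest Berge path $P$ in $H$, we have $|\kappa_0(P)|=|\kappa_{2k}(P)|=k$'' appearing in Theorem~\ref{THM: 2k}, so my first task is simply to align all the remaining hypotheses of that theorem with the present setting.

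The remaining hypotheses are straightforward to check. We are given $k\ge r\ge 2$, $n>2k+1$, and $\ell(H)=2k$, which match directly. The only subtlety is the degree condition: the strict inequality $\delta_1(H)>\binom{k}{r-1}$ means $\delta_1(H)\ge\binom{k}{r-1}+1$, so I would choose $d=\binom{k}{r-1}+1$, which indeed lies in the admissible set $\{\binom{k}{r-1},\binom{k}{r-1}+1\}$ required by Theorem~\ref{THM: 2k}, and then $\delta_1(H)\ge d$ holds. With this choice of $d$, Theorem~\ref{THM: 2k} applies.

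Its conclusion then asserts in particular that $d=\binom{k}{r-1}$, whereas we selected $d=\binom{k}{r-1}+1$; this is the contradiction. Hence the assumption that all longest Berge paths satisfy $|\kappa_0(P)|=|\kappa_{2k}(P)|=k$ is untenable, which is exactly the assertion of the corollary. I do not expect any real obstacle here: the entire combinatorial content has already been absorbed into Theorem~\ref{THM: 2k}, and the only point requiring care is the bookkeeping with $d$, namely recognizing that the \emph{strict} hypothesis on $\delta_1(H)$ is precisely what allows us to invoke the theorem with $d=\binom{k}{r-1}+1$ and then collide with its ``$d=\binom{k}{r-1}$'' conclusion. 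The one thing worth stating explicitly is the reading of the statement: we are negating a universally quantified condition, so the contradiction yields (nonconstructively) the existence of a single longest Berge path violating $|\kappa_0(P)|=|\kappa_{2k}(P)|=k$, which is all that is claimed and all that is needed later in the proof of Theorem~\ref{THM: Main1}~(2).
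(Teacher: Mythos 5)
Your proposal is correct and matches the paper's own treatment: the paper gives no separate proof, presenting the statement as an immediate corollary of Theorem~\ref{THM: 2k}, and the intended argument is precisely your contrapositive --- invoke Theorem~\ref{THM: 2k} with $d=\binom{k}{r-1}+1$ (legitimate since $\delta_1(H)>\binom{k}{r-1}$ and this $d$ lies in the allowed set) and collide with its conclusion $d=\binom{k}{r-1}$. Your explicit remark on the quantifier is also the accurate reading of what this derivation yields, namely the negation of the universally quantified hypothesis of Theorem~\ref{THM: 2k}.
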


Now it is ready to prove Theorem~\ref{THM: Main1} (2). We only need to cope with the cases $k>r=3$ and $k=r\ge 3$. The following is a simple observation.
\begin{obs}\label{OBS: o1}
Let $A$ be a set of integers. If $(A-1)\cup\{a_1, \ldots, a_s\}=A\cup\{b_1, \ldots, b_s\}$ (resp. $(A+1)\cup\{a_1, \ldots, a_s\}=A\cup\{b_1, \ldots, b_s\}$) with $a_1<\ldots<a_s$ and $b_1<\ldots<b_s$, then $A=\cup_{i=1}^s[b_i+1, a_i]$ (resp. $A=\cup_{i=1}^s[a_i, b_i-1]$).
\end{obs}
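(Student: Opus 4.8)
The plan is to translate the set identity into a pointwise identity of indicator functions and then read off the maximal-interval (``run'') structure of $A$. Throughout I take $A$ to be a finite set of integers, which is the only case that occurs in the applications, and I interpret the two unions as \emph{disjoint}, i.e.\ each $a_i\notin A-1$ and each $b_i\notin A$ (this hypothesis is genuinely needed: for $A=\{1,2,3\}$ one has $(A-1)\cup\{0,3\}=\{0,1,2,3\}=A\cup\{0,2\}$, yet $[1,0]\cup[3,3]=\{3\}\neq A$, the failure coming from $0\in A-1$ and $2\in A$). Writing $\mathbf{1}_S$ for the indicator of $S$ and using that disjoint unions add indicators, the hypothesis $(A-1)\cup\{a_1,\dots,a_s\}=A\cup\{b_1,\dots,b_s\}$ becomes $\mathbf{1}_{A-1}+\mathbf{1}_{\{a_i\}}=\mathbf{1}_A+\mathbf{1}_{\{b_i\}}$. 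Since $\mathbf{1}_{A-1}(n)=\mathbf{1}_A(n+1)$, this rearranges to $\mathbf{1}_{\{a_i\}}(n)-\mathbf{1}_{\{b_i\}}(n)=\mathbf{1}_A(n)-\mathbf{1}_A(n+1)=:\Delta(n)$ for every integer $n$, the discrete derivative of $\mathbf{1}_A$.

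Next I would decode $\Delta$. If the maximal runs of $A$ are $[l_1,r_1],\dots,[l_m,r_m]$ with $r_j+1<l_{j+1}$, then $\Delta(n)=+1$ exactly at the right endpoints $n=r_j$, $\Delta(n)=-1$ exactly at $n=l_j-1$, and $\Delta(n)=0$ otherwise. Because the left-hand side is $\{-1,0,1\}$-valued, equalling $+1$ precisely on $\{a_i\}\setminus\{b_i\}$ and $-1$ precisely on $\{b_i\}\setminus\{a_i\}$, I obtain $\{a_i\}\setminus\{b_i\}=\{r_1,\dots,r_m\}$ and $\{b_i\}\setminus\{a_i\}=\{l_1-1,\dots,l_m-1\}$, while the common elements $C=\{a_i\}\cap\{b_i\}$ all lie where $\Delta=0$; in fact each $c\in C$ satisfies $c\notin A$ and $c+1\notin A$, so $c$ lies strictly inside a gap of $A$.

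The main work, and the step I expect to be the real obstacle, is to show that the index-by-index pairing $a_i\leftrightarrow b_i$ of the two increasing lists recovers the runs. The key computation is that each $c\in C$ occupies the \emph{same} position in the increasing list of $\{a_i\}$ as in that of $\{b_i\}$: its two ranks differ only by $|\{r_j\le c\}|-|\{l_j-1\le c\}|$, and since $c$ sits in a gap both of these count the runs lying entirely to the left of $c$, so they are equal. Hence every common element pairs with itself and contributes the empty interval $[c+1,c]$. Deleting these identical positions from both lists leaves the genuine endpoints aligned, so the $p$-th smallest element of $\{a_i\}\setminus\{b_i\}$, namely $r_p$, is paired with the $p$-th smallest element of $\{b_i\}\setminus\{a_i\}$, namely $l_p-1$, contributing $[(l_p-1)+1,\,r_p]=[l_p,r_p]$. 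Taking the union over all $i$ then gives $\bigcup_{i=1}^s[b_i+1,a_i]=\bigcup_{j=1}^m[l_j,r_j]=A$, as claimed.

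Finally, the parenthetical $A+1$ version follows from the $A-1$ version by applying it to the reflected set $-A$ via $x\mapsto -x$, which interchanges the roles of left and right endpoints and converts each interval $[b_i+1,a_i]$ into $[a_i,b_i-1]$, yielding $A=\bigcup_{i=1}^s[a_i,b_i-1]$.
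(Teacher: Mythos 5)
The paper never proves this statement: it is introduced as ``a simple observation'' and then invoked without justification (twice in the proof of Theorem~\ref{THM: weak2} and three times in the proof of Theorem~\ref{THM: weak3}), so there is no argument of record to compare yours against, and the only question is whether your proof stands on its own. It does: I checked the discrete-derivative identity, the decoding of $\Delta$ at the run endpoints, the rank argument aligning the common elements (which indeed needs the fact that each such element $c$ satisfies $c\notin A$ and $c+1\notin A$, a consequence of your disjointness reading), the resulting pairing of $r_q$ with $l_q-1$, and the reflection $x\mapsto -x$ for the second version. Your most valuable contribution is the counterexample: with $A=\{1,2,3\}$ one has $(A-1)\cup\{0,3\}=A\cup\{0,2\}=\{0,1,2,3\}$ but $[1,0]\cup[3,3]=\{3\}\ne A$, so the observation is false as literally stated, and your added hypothesis ($a_i\notin A\mp 1$ and $b_i\notin A$) is genuinely needed. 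This is a real, though harmless, imprecision in the paper: in every one of its invocations the hypothesis holds automatically, since the $a_i$ are by construction exactly the elements of $(A\cup\{b_1,\ldots,b_s\})\setminus(A\mp 1)$ and the $b_i$ are values such as $-1$, $t+1$ or $2r+1$ lying outside $A$; so the corrected statement supports everything the paper does with it. One further remark: the pairing step you anticipated as the main obstacle can be bypassed entirely. Summing the identity $\mathbf{1}_{\{a_i\}}(k)-\mathbf{1}_{\{b_i\}}(k)=\mathbf{1}_A(k)-\mathbf{1}_A(k+1)$ over all $k\ge n$ (legitimate since $A$ is finite) gives $|\{i:a_i\ge n\}|-|\{i:b_i\ge n\}|=\mathbf{1}_A(n)$; since $\{i:a_i\ge n\}$ is a suffix of $[1,s]$ and $\{i:b_i<n\}$ is a prefix, an index $i$ with $b_i<n\le a_i$ exists exactly when $|\{i:a_i\ge n\}|>|\{i:b_i\ge n\}|$, i.e.\ exactly when $n\in A$. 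This proves $\bigcup_{i=1}^s[b_i+1,a_i]=A$ directly, avoids the run analysis, and works under the even weaker hypothesis that the two unions agree as multisets.
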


\subsection{Proof of k$>$r=3.}

\begin{thm}\label{THM: weak2}
Let $k>3$ and $n>4k$. If $H$ is a connected $n$-vertex $3$-graph  with $\delta_1(H)>\binom{k}{2}$, then $\ell(H)\ge 2k+1$.
\end{thm}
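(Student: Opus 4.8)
The plan is to argue by contradiction: suppose $\ell(H)=t\le 2k$ and fix a longest Berge path $P$ with $E(P)=\{e_1,\dots,e_t\}$ and $K(P)=\{v_0,\dots,v_t\}$. First I would pin down the cycle structure of $H$. Since $n>4k=2k(r-1)\ge 2k+1$ (here $r=3$), Lemma~\ref{LEM: cycle1} rules out $c(H)=t+1$ (otherwise $n=c(H)=t+1\le 2k+1<n$), while Lemma~\ref{LEM: cycle2} rules out $c(H)=t$ (otherwise $\ell(H)\le c(H)$ forces $c(H)\ge 2k+1>t$). Hence $c(H)\le t-1$, so every conclusion of Proposition~\ref{PROP: set}, including the refined relations (2.1) and (2.2), is available.

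Next I would set up the degree estimate as in Theorem~\ref{THM: weak}. By Corollary~\ref{PROP: p1}(b), $d_H(v_a)\le\binom{|K_a|}{2}+|E^I_a|$ for $a\in\{0,t\}$. Using (1.1) and (1.2) of Proposition~\ref{PROP: set}, $|K_0|+|K_t|=|(\kappa_0-1)\cup\kappa_t|\le t\le 2k$, and using (1.3), $|E^I_0|=|\varepsilon^i_0-1|\le t-|K_t|$. Assuming without loss of generality $|K_0|\le|K_t|$ gives $|K_0|\le k$ and the chain $\binom{k}{2}<d_H(v_0)\le\binom{|K_0|}{2}+2k-|K_0|$.

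Then I would split off the endpoint count. If $|K_0|=k$, then forcedly $|K_t|=k$ and $t=2k$, so $|\kappa_0(P)|=|\kappa_{2k}(P)|=k$, which contradicts Corollary~\ref{COR: kk}; hence $|K_0|\le k-1$. By convexity of $g(x)=\binom{x}{2}+2k-x$ on $[0,k-1]$ we get $d_H(v_0)\le g(k-1)=\binom{k}{2}+2$. Unlike the $r\ge 4$ case, this does not yet contradict $d_H(v_0)\ge\binom{k}{2}+1$; it only leaves a narrow window. Tracking the slack, $g(x)>\binom{k}{2}$ forces $|K_0|=k-1$ for $k\ge 5$ (with $k=4$ needing a separate, hands-on treatment), pinning $|E^I_0|$, $|K_t|$ and $t$ into small ranges.

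The main obstacle, and where the $r=3$ analysis genuinely departs from Theorem~\ref{THM: weak}, is closing this narrow window, since the surplus is only $+2$ rather than the large slack available when $r\ge 4$. Here I would exploit the rigidity of $3$-uniform inner edges through an endpoint: if $v_0\in e_i$ with $2\le i\le t$, then $e_i=\{v_0,v_{i-1},v_i\}$ is forced, so each inner edge counted in $|E^I_0|$ is pinned to a consecutive pair of key vertices. Feeding this rigidity into the refined relations $(\varepsilon^i_0-1)\cap\varepsilon^i_t=\emptyset$ and $(\varepsilon^i_0-2)\cap\kappa_t=(\varepsilon^i_t+1)\cap\kappa_0=\emptyset$ of Proposition~\ref{PROP: set}(2), together with the equality description of $N_H(v_0)$ in Corollary~\ref{PROP: p1}(b), I would show that realizing $d_H(v_0)\ge\binom{k}{2}+1$ in this window forces an edge arrangement from which one either rotates and splices $P$ into a Berge path of length $t+1$ (contradicting $\ell(H)=t$), or assembles a Berge cycle of length $t$ (contradicting $c(H)\le t-1$ via Lemma~\ref{LEM: cycle2}). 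I expect this final case check—sorting the few admissible patterns of $\kappa_0,\kappa_t,\varepsilon^i_0,\varepsilon^i_t$ and dispatching the exceptional value $k=4$—to be the most delicate and laborious part of the argument.
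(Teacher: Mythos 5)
Your preparatory steps are correct and essentially coincide with the paper's own opening moves: the contradiction frame, invoking Lemmas~\ref{LEM: cycle1} and~\ref{LEM: cycle2} so that all of Proposition~\ref{PROP: set} (including (2.1) and (2.2)) is available, the bound $d_H(v_a)\le\binom{|K_a|}{2}+|E^I_a|$ from Corollary~\ref{PROP: p1}(b), the exclusion of $|K_0|=k$ via Corollary~\ref{COR: kk}, and the observation that convexity now yields only $d_H(v_0)\le\binom{k}{2}+2$, a window of size two rather than an outright contradiction. The gap is everything after that. The sentence ``I would show that realizing $d_H(v_0)\ge\binom{k}{2}+1$ in this window forces an edge arrangement from which one either rotates \ldots or assembles a Berge cycle'' is not an argument; it is a restatement of what remains to be proved, and it is where the entire content of Theorem~\ref{THM: weak2} lives. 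Concretely, the surviving configurations are not self-eliminating: for example, $t=2k-1$, $\kappa_0=[1,k-1]$, $\kappa_t=[k,2k-2]$, $\varepsilon^i_0=[1,k]$, $\varepsilon^i_t=[k,2k-1]$ satisfies every set relation of Proposition~\ref{PROP: set} and every counting bound you derive, so no sorting of ``admissible patterns'' at that level can finish the proof. The paper eliminates such configurations by a finer mechanism your proposal never reaches: it splits on $|\varepsilon^i_0|=k+1$, $|\varepsilon^i_0|=k$, $|\varepsilon^i_0|\le k-1$ (not on $|K_0|$), uses Observation~\ref{OBS: o1} (which you never invoke) to upgrade the shift-disjointness relations (1.3), (2.1), (2.2) into interval structure for $\varepsilon^i_0$, $\varepsilon^i_t$, $\kappa_{2k}$, and then derives contradictions that are forced edge identities rather than longer paths or cycles: e.g.\ $e_{k+1}$ would have to equal both $\{v_0,v_k,v_{k+1}\}$ and $\{v_{2k},v_k,v_{k+1}\}$; or equality in Corollary~\ref{PROP: p1}(b) together with two consecutive indices $s,s+1\in\kappa_0\subseteq\varepsilon^i_0$ forces the $3$-set $\{v_0,v_s,v_{s+1}\}$ to lie in $E^O_0(P)$ and simultaneously to equal $e_{s+1}\in E(P)$.

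A secondary defect of the route itself: running convexity in the variable $|K_0|$ is exactly what creates your exceptional value $k=4$ (for $k=4$ every $|K_0|\in[0,3]$ satisfies $\binom{|K_0|}{2}+2k-|K_0|>\binom{4}{2}$, so nothing gets pinned down), whereas the paper's parametrization by $|\varepsilon^i_0|\ge|\varepsilon^i_t|$ combined with $|\kappa_a|\le t-|\varepsilon^i_{t-a}|$ treats all $k>3$ uniformly, with no exceptional case. So even as an outline the proposed organization is weaker than the paper's, and as a proof it is missing precisely the case analysis that distinguishes $r=3$ from the short computation that suffices when $r\ge4$.
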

\begin{proof} Suppose to the contrary that $\ell(H)\le 2k$.
Let $P$ be a longest Berge path in $H$ with $E(P)=\{e_1,\ldots,e_{t}\}$ and $K(P)=\{v_0, v_1, \ldots, v_{t}\}$. Then $t\le 2k$. Moreover, if $t=2k$ then $|\kappa_0|\not=|\kappa_{2k}|$ by Corollary~\ref{COR: kk}.
By (2.1) of Proposition~\ref{PROP: set},
$|\varepsilon^i_0|+|\varepsilon^i_t|=|(\varepsilon^i_0-1)\cup \varepsilon^i_t|\le|[0,t]|\le 2k+1$. Without loss of generality, assume $|\varepsilon^i_0|\ge |\varepsilon^i_t|$. So $|\varepsilon^i_t|\le k$. By Corollary~\ref{PROP: p1}, we have
\begin{equation}\label{EQN: e4}
\binom{k}{2}<d_H(v_a)\le\binom{|\kappa_a|}{2}+|\varepsilon^i_a|
\end{equation}
for $a\in\{0, t\}$.
This implies that $|\kappa_t|\ge k-1$. By (1.3) of Proposition~\ref{PROP: set},
\begin{equation}\label{EQN: e5}
(\varepsilon^i_0-1)\cap \kappa_t=\varepsilon^i_t\cap \kappa_0=\emptyset \mbox{ and so } |\kappa_a|\le t-|\varepsilon^i_{t-a}| \mbox{ for } a\in\{0,t\}.
\end{equation}
Therefore, we have $|\varepsilon_t^i|\le |\varepsilon_0^i|\le t-k+1\le k+1$ and the equality holds if and only if $t=2k$, $|\kappa_t|=k-1$ and $|\varepsilon_t^i|=k$. Again by (\ref{EQN: e4}) and $|\varepsilon_0^i|\le k+1$, we have $|\kappa_0|\ge k-1$. By (1.2) of Proposition~\ref{PROP: set}, \begin{equation}\label{EQN: e6}
(\kappa_0-1)\cap \kappa_t=\emptyset \mbox{ and } (\kappa_0-1)\cup \kappa_t\subseteq[0, t-1].
\end{equation}

If $|\varepsilon^i_0|= k+1$, then we have $t=2k$, $|\kappa_t|=k-1$ and $|\varepsilon_t^i|=k$.
By (\ref{EQN: e5}), we have $\varepsilon^i_0-1=[0,2k-1]\setminus\kappa_{2k}$. By (2.2) of Proposition~\ref{PROP: set}, $(\varepsilon^i_0-2)\cap \kappa_{2k}=\emptyset$. So $\varepsilon^i_0-2\subseteq[-1,2k-1]\setminus\kappa_{2k}= (\varepsilon^i_0-1)\cup\{-1\}$. By Observation~\ref{OBS: o1}, $\varepsilon^i_0-1=[0, \max(\varepsilon_0^i-1)]$. Since $|\varepsilon^i_0-1|=|\varepsilon_0^i|=k+1$, we have $\varepsilon^i_0=[1, k+1]$. Combining with $(\varepsilon^i_0-1)\cup \varepsilon^i_t=[0,t]$ and $(\varepsilon^i_0-1)\cap \varepsilon^i_t=\emptyset$, we have  $\varepsilon^i_{2k}=[k+1,2k]$. So $v_0, v_{2k}\in e_{k+1}$. But $H$ is a $3$-graph means $e_{k+1}=\{v_0, v_k, v_{k+1}\}=\{v_{2k},v_k,v_{k+1}\}$, this is impossible.

If $|\varepsilon^i_0|=k$, we claim that $|\varepsilon^i_t|=k.$  If not, by (\ref{EQN: e4}) again, we have $|\kappa_t|\ge k$.  By (\ref{EQN: e5}), we get $|\kappa_t|=k$ and $t=2k$. Again by (\ref{EQN: e5}) and (2.2) of Proposition~\ref{PROP: set} and Observation~\ref{OBS: o1}, we have $\varepsilon^i_0=[1,k]$ and $\kappa_{2k}=[k, 2k-1]$. By (\ref{EQN: e6}), we have  $\kappa_0-1\subseteq [0, 2k-1]\setminus \kappa_{2k}=[0, k-1]$, i.e. $\kappa_0\subseteq [1,k]=\varepsilon^i_0$. Since $t=2k$, $|\kappa_0|\not=|\kappa_t|=k$. Hence $|\kappa_0|=k-1$.
By (\ref{EQN: e4}), we have $N_H(v_0)=\binom{K_0}{2}\cup (E_0^I-\{v_0\})$ and $\binom{K_0}{2}\cap (E_0^I-\{v_0\})=\emptyset$, which implies that $\{v_0,v_s,v_{s+1}\}\in E_0^O(P)$ for some $s, s+1\in \kappa_0\subseteq \varepsilon^i_0$, a contradiction to $\{v_0,v_s,v_{s+1}\}=e_{s+1}\in E(P)$.
Therefore, if $|\varepsilon^i_0|=k$ then $|\varepsilon^i_t|=k$. Moreover, the above discussion also implies that the case $|\varepsilon^i_0|=k$, $|\kappa_t|=k$ and $|\kappa_0|=k-1$ does not happen. Since $|\varepsilon^i_t|=k$, (\ref{EQN: e5}) implies $|\kappa_0|\le k$.
If $|\kappa_{t}|=k$, by (\ref{EQN: e5}), we have $t=2k$. So $|\kappa_0|\not=|\kappa_{2k}|=k$. This forces $|\kappa_0|=k-1$. But this case does not happen by the above statement. So we assume $|\kappa_t|=k-1$. Since $|\varepsilon^i_0|=|\varepsilon^i_t|=k$, by the symmetry of $0$ and $t$, the case $|\varepsilon^i_t|=k$, $|\kappa_0|=k$ and $|\kappa_t|=k-1$ does not hold too.  Thus, we only need to consider the case $|\kappa_0|=|\kappa_{2k}|=k-1$.
By (\ref{EQN: e4}), we have $$N_H(v_a)=\binom{K_a}{2}\cup (E_a^I-\{v_a\}) \mbox{ and } \binom{K_a}{2}\cap (E_a^I-\{v_a\})=\emptyset \mbox{ for } a\in\{0,t\}.$$
By (1.3),(2.2) and (2.1) of Proposition~\ref{PROP: set}, we have $\kappa_0, \kappa_0-1,\varepsilon_0^i-1\subseteq[0, t-1]\setminus \varepsilon^i_{t}$. So 
$|(\kappa_0-1)\cap \kappa_0\cap(\varepsilon^i_0-1)|\ge|\kappa_0-1|+|\kappa_0|+|\varepsilon^i_0-1|-2|[0,t-1]\setminus\varepsilon^i_{t}|=k-2\ge 2.$
Since $\kappa_0\subseteq[1,t-1]$, $0, t-1\not\in(\kappa_0-1)\cap \kappa_0\cap(\varepsilon^i_0-1)$. So we can pick $s\in(\kappa_0-1)\cap \kappa_0\cap(\varepsilon^i_0-1)$ with $s\in[1,t-2]$ , i.e. $s,s+1\in \kappa_0$ and $s+1\in \varepsilon^i_0$. Hence $\{v_0,v_{s},v_{s+1}\}\in E^O_0(P)$, but $e_{s+1}=\{v_0, v_s, v_{s+1}\}\in E_0^I(P)$, a contradiction.

Now we assume $|\varepsilon^i_0|\le k-1$, i.e. $|\varepsilon^i_t|\le|\varepsilon^i_0|\le k-1$. By (\ref{EQN: e4}), we have $|\kappa_0|,|\kappa_t|\ge k$. On the other hand, by (\ref{EQN: e6}), $2k\le|\kappa_0|+|\kappa_t|\le|(\kappa_0-1)\cup \kappa_t|\le|[0,t-1]|=t\le 2k$. So we have $t=2k$ and $|\kappa_0|=|\kappa_t|=k$, this is impossible.
\end{proof}

\subsection{Proof of $k=r\ge3$.}
\begin{thm}\label{THM: weak3}
For $r\ge 3$ and $n>2r(r-1)$, let $H$ be a connected $r$-graph on $n$ vertices. If $\delta_1(H)>r$ then $\ell(H)\ge 2r+1$.
\end{thm}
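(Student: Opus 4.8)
\medskip
\noindent\emph{Proof plan.} The plan is to argue by contradiction in the spirit of the proof of Theorem~\ref{THM: weak2} for $k>r=3$: assume $\ell(H)=t\le 2r$, fix a longest Berge path $P$ with $E(P)=\{e_1,\dots,e_t\}$ and $K(P)=\{v_0,\dots,v_t\}$, and derive a contradiction. The essential new ingredient, since the ``$3$-graph coincidence'' exploited in Theorem~\ref{THM: weak2} is unavailable for general $r$, will be a rotation--extension argument using $n>2r(r-1)$. Set $x=|\kappa_0|$, $y=|\kappa_t|$, $x'=|\varepsilon^i_0|$, $y'=|\varepsilon^i_t|$. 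Because $\binom{r}{r-1}=r$, the hypothesis $\delta_1(H)>r$ together with Corollary~\ref{PROP: p1}(b) gives $\binom{x}{r-1}+x'\ge r+1$ and $\binom{y}{r-1}+y'\ge r+1$. From Proposition~\ref{PROP: set} I record $x+y\le t$ (by (1.2)), $x'+y'\le t+1$ (by (2.1)), and $x'+y\le t$, $y'+x\le t$ (by (1.3)).

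First I would reduce to a few boundary configurations. Taking $x\le y$ without loss of generality and combining $\binom{x}{r-1}+x'\ge r+1$ with $x'\le t-y\le 2r-y$ yields $y\le r-1+\binom{x}{r-1}$, and symmetrically $x\le r-1+\binom{y}{r-1}$. Since $\binom{m}{r-1}\in\{0,1,r\}$ according as $m\le r-2$, $m=r-1$, or $m=r$, these inequalities leave only the cases $x\le r-2,\ y=r-1$; $\ x=y=r-1$; $\ x=r-1,\ y=r$; and $x=y=r$. The last forces $x+y=2r$, hence $t=2r$ with $|\kappa_0|=|\kappa_{2r}|=r$, which is excluded by Corollary~\ref{COR: kk}. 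The possibility $y\le r-2$ is impossible, since then both degree bounds force $x',y'\ge r+1$ and so $x'+y'\ge 2r+2>t+1$. In each surviving case the relations pin down $t=2r$ (or $t=2r-1$ when $x=y=r-1$) and force equality in Corollary~\ref{PROP: p1}(b), giving $N_H(v_a)=\binom{K_a}{r-1}\sqcup(E^I_a-\{v_a\})$ at the relevant endpoints.

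Next I would locate the path edges at the ends. Using $(\varepsilon^i_0-1)\cap\kappa_t=\emptyset$ shows $\varepsilon^i_0-1$ is the complement of $\kappa_t$ in $[0,t-1]$; feeding $(\varepsilon^i_0-2)\cap\kappa_t=\emptyset$ (part (2.2)) into Observation~\ref{OBS: o1} shows this complement is an interval based at $0$, and since $1\in\varepsilon^i_0$ this forces $\varepsilon^i_0=[1,r+1]$ (or $[1,r]$), and dually $\varepsilon^i_t=[t-r+1,t]$, with $\kappa_0,\kappa_t$ also intervals. Thus $v_0\in e_1,\dots,e_{r+1}$ and $v_t\in e_{t-r+1},\dots,e_t$: both ends are ``hubs'', and when $t=2r$ the only path edge joining the front block $\{v_0,\dots,v_r\}$ to the back block is $e_{r+1}\supseteq\{v_0,v_r,v_{r+1},v_{2r}\}$. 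A direct check shows no Berge cycle of length $\ge 2r$ fits inside this structure, so the contradiction cannot be produced by cycle building through Lemmas~\ref{LEM: cycle1} and~\ref{LEM: cycle2} alone.

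The main obstacle, and the final step, is to build a Berge path of length $2r+1$ by rotation and extension. Because the $2r$ path edges each contain $v_0$ or $v_{2r}$, a short count shows $|V(P)|<n$ whenever $n>2r(r-1)$, so some vertex lies outside $V(P)$; connectivity then produces a $w\notin K(P)$ joined by a non-path edge to a key vertex $v_m$, and (1.2) forces $\kappa_w\subseteq[1,r+1]$, hence $m\in[1,r+1]$. The hub at $v_0$ lets me rotate $P$, reusing $e_1,\dots,e_{r+1}$, so that the left endpoint becomes any prescribed vertex of $\{v_0,\dots,v_r\}$, and the hub at $v_{2r}$ realizes any vertex of $\{v_{r+1},\dots,v_{2r}\}$ as the right endpoint; in particular $v_m$ can be made an endpoint of a longest path reusing $E(P)$. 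Since $w\notin K(P)$ is joined to that endpoint by an edge outside the path, Observation~\ref{PROP: p0} makes the rotated path extendible, yielding a Berge path of length $2r+1$ and contradicting $\ell(H)=2r$. The remaining case $x=y=r-1$ (where $t$ may be $2r-1$) is treated by the same interval analysis and rotation--extension. I expect the delicate points to be verifying the rotation step and guaranteeing a usable vertex $w$.
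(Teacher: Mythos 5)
Your opening reduction is sound and in fact parallels the paper's own case split: your four configurations $(|\kappa_0|,|\kappa_t|)\in\{(\le r-2,\,r-1),\,(r-1,r-1),\,(r-1,r),\,(r,r)\}$ are exactly the paper's Cases 0--3 (organized there via $a_s(P)=d_H(v_s)-|E^I_s(P)|$), and killing $(r,r)$ by Corollary~\ref{COR: kk} is what the paper does. The first genuine gap is your structural claim that in every surviving case both ends are ``hubs'' with $\varepsilon^i_0=[1,r+1]$ (or $[1,r]$) and $\varepsilon^i_t=[t-r+1,t]$. The interval pin-down via Observation~\ref{OBS: o1} needs the relevant disjoint unions to fill $[0,t-1]$ (resp.\ $[0,t]$) \emph{exactly}, and that only happens in the case $(|\kappa_0|,|\kappa_t|)=(0,r-1)$. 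In the case $(r-1,r-1)$ with $t=2r$ one gets $|\varepsilon^i_0|=|\varepsilon^i_{2r}|=r$, so $(\varepsilon^i_0-1)\cup\varepsilon^i_{2r}$ misses one element $z$ of $[0,2r]$ and nothing forces intervals; worse, the paper's Case 2 \emph{proves} $\min\varepsilon^i_{2r}<\max(\varepsilon^i_0-1)$, i.e.\ the two sets must interleave (each splitting into two intervals), which is the opposite of your claimed picture, and the contradiction there requires the reversal trick plus a delicate edge-replacement argument. In the case $(r-1,r)$ only the $v_0$-end is a hub: one gets $\varepsilon^i_0=[1,r]$ and $\kappa_{t}=[r,2r-1]$, but $\varepsilon^i_t$ is not determined at all (only $|\varepsilon^i_t|\ge 1$); hence ``every path edge contains $v_0$ or $v_{2r}$'' and your count $|V(P)|<n$ both fail, and with them your entire final step. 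The paper instead shows $e_i\subseteq\{v_r,\ldots,v_{2r}\}$ for all $i\in[r+1,2r]$ and finishes by counting these $r+1$ edges against the $r$ sets in $\binom{\{v_r,\ldots,v_{2r-1}\}}{r-1}$.

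The second gap is the sentence ``connectivity then produces a $w\notin K(P)$ joined by a non-path edge to a key vertex $v_m$.'' Connectivity only yields an edge $g\notin E(P)$ with $g\cap V(P)\neq\emptyset$ and $g\not\subseteq V(P)$; the attachment may occur at a \emph{non-key} vertex, and $V(P)=K(P)$ is not available in this theorem. This matters concretely even in your cleanest case $(0,r-1)$: if $g$ meets $V(P)$ only in some $u\in e_{r+1}\setminus K(P)$, then $e_{r+1}$ is the unique edge joining $\{v_0,\ldots,v_r\}$ to $\{v_{r+1},\ldots,v_{2r}\}$, so any path covering all key vertices must use it as the bridge and cannot simultaneously use it to splice in $u$ and $w$; your rotation--extension stalls. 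Repairing this requires proving statements of the form ``$e_i\subseteq K(P)$,'' which is exactly where the paper invests its effort (replacing a path edge by the unique outside end-edge and invoking Observation~\ref{PROP: p0}); and having done so, the paper does \emph{not} extend a path from an outside vertex at all --- it builds a Berge cycle of length $t+1$ from path edges plus that end-edge and applies Lemma~\ref{LEM: cycle1} to force $n=t+1\le 2r+1$, contradicting $n>2r(r-1)$. This also refutes your side remark that the contradiction ``cannot be produced by cycle building through Lemmas~\ref{LEM: cycle1} and~\ref{LEM: cycle2} alone'': that is precisely how the paper concludes. As written, your plan covers at most one of the three hard cases, and only under the unproven assumption that the outside attachment happens at a key vertex.
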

\begin{proof} Suppose to the contrary that $\ell(H)\le 2r$.
Let  $P$ be a longest Berge path in $H$ with $E(P)=\{e_1, \ldots, e_{t}\}$ and $K(P)=\{v_0, v_1, \ldots, v_{t}\}$.
Then $t\le 2r$, and if $t=2r$ then $|\kappa_0|\not=|\kappa_{2r}|$ by Corollary~\ref{COR: kk}.
Let $a_s(P)=d_{H}(v_s)-|E^I_s(P)|$ for $s\in\{0,t\}$.
Then we have the following observations: If $a_s(P)=0$ then $|\kappa_s|=0$ and $|E^I_s|\ge r+1$; if $a_s(P)=1$ then $|\kappa_s|=r-1$ and $|E^I_s|\ge r$; and if $a_s(P)\ge 2$ then $|\kappa_s|\ge r$.  Without loss of generality, assume  $a_0(P)\ge a_t(P)$. Note that the corresponding version of (\ref{EQN: e4})
\begin{equation}\label{EQN: e7}
r<d_H(v_a)\le\binom{|\kappa_a|}{r-1}+|\varepsilon^i_a|,
\end{equation}
for $a\in\{0,t\}$,
(\ref{EQN: e5}) and (\ref{EQN: e6})  still hold.

\medskip
\noindent\textbf{Case 0.} $a_0(P)=a_t(P)=0$.

Then $|E^I_0|+|E^I_t|\ge 2r+2$. But, by (2.1) of Proposition~\ref{PROP: set}, $|E^I_0|+|E^I_t|=|(\varepsilon^i_0-1)\cup \varepsilon^i_t|\le t+1\le 2r+1$, a contradiction.

\begin{claim}\label{CLA: c5}
If $|\kappa_s|=r-1$ then $|\varepsilon_{t-s}^i|\not=t-r+1$ for $s\in\{0,t\}$.
\end{claim}
We give the proof for $s=0$, the case $s=t$ can be proved similarly. Suppose to the contrary that $|\varepsilon_{t}^i|=t-r+1$. Then  $|\kappa_0|+|\varepsilon_t^i|=t$.
By (\ref{EQN: e5}) and (2.2) of Proposition~\ref{PROP: set}, $\varepsilon^i_{t}=[1,t]\setminus\kappa_0$ and $\varepsilon^i_{t}+1\subseteq[1,t+1]\setminus\kappa_0=\varepsilon_t^i\cup\{t+1\}$ i.e. $(\varepsilon^i_{t}+1)\cup\{\min\varepsilon_t^i\}=\varepsilon_t^i\cup\{t+1\}$.
By Observation~\ref{OBS: o1}, $\varepsilon^i_{t}=[\min\varepsilon_t^i, t]$. Since $|\varepsilon_t^i|=t-r+1$, we have $\varepsilon_t^i=[r,t]$ and so $\kappa_{0}=[1,r-1]$.
Thus $\varepsilon^i_0=[0, t]\setminus \varepsilon^i_{t}+1=[1, r]$, $E^O_0(P)=\{e_0\}$, where $e_0=\{v_0\}\cup K_0$.
We claim that for any $e_i$ with $i\in [1,r-1]$, $e_i\subseteq K(P)$. Otherwise, suppose there is $v\notin K(P)$ and $i\in[1,r-1]$ such that $v\in e_i$.
Let $P_i$ be  obtained from $P$ by replacing $e_i$ by $e_0$. Note that $v_{i-1}, v_i\in e_0$. So $P_i$ is a longest Berge path in $H$ with $K(P_i)=K(P)$. Since $i\in \varepsilon_0^i(P)$, $v_0\in e_i$ but $e_i\notin E(P_i)$. Thus $v_0\in K_v(P_i)$. So $P_i$ is extendible by Observation~\ref{PROP: p0}, which is a contradiction.
The claim also implies that for each $i\in[1,r-1]$, there exists a $j_i\in[r,t]$ such that $v_{j_i}\in e_i$.
But this is impossible. Otherwise, $v_0\in e_i$ since $i\in \varepsilon_0^i$.
Since $j_i\in\varepsilon_t^i$, we have  $C$  with $E(C)=\{e_i, e_{j_i+1},\ldots, e_{t}, e_{j_i},\ldots, e_{i+1}, e_0, e_{i-1},\ldots, e_1\}$ and $K(C)=\{v_0, v_{j_i},\ldots,v_t, v_{j_i-1}, \ldots, v_1\}$ is a Berge cycle of length $t+1$, which is a contradiction to $n>2r(r-1)$ (by Lemma~\ref{LEM: cycle1}, we have $n=c(H)=t+1\le 2r+1$).

\medskip
\noindent\textbf{Case 1.} $a_0(P)=1$ and $a_t(P)=0$.

Then $|\kappa_0|=r-1$, $|E^I_0|\ge r$ and $|\kappa_{t}|=0$, $|E^I_t|\ge r+1$. By $2r+1\le|\varepsilon^i_0|+|\varepsilon^i_t|\le t+1\le 2r+1$, we have  $|\varepsilon^i_0|=r$, $|\varepsilon^i_{t}|=r+1$, and $t=2r$. By Claim~\ref{CLA: c5}, this is impossible.

\medskip
\noindent\textbf{Case 2.} $a_0(P)=1$ and  $a_t(P)=1$.

Then $|\varepsilon^i_s|\ge r$, $|\kappa_s|=r-1$ for $s\in\{0,t\}$, and $t\ge 2r-1$.
If $t=2r-1$
then $|\varepsilon^i_t|\ge r=t-r+1$. By Claim~\ref{CLA: c5}, we get a contradiction.


Now suppose $t=2r$. If $|\varepsilon^i_{s}|\ge r+1(=2r-r+1)$ for some $s\in\{0,t\}$, again by Claim~\ref{CLA: c5}, we have a contradiction.
Thus $|\varepsilon_{0}^i|=|\varepsilon_{2r}^i|=r$.
By (2.1) of Proposition~\ref{PROP: set}, there exists an integer $z\in[1, 2r-1]$ such that $[0,2r]=(\varepsilon^i_0-1)\cup \varepsilon^i_{2r}\cup\{z\}$.
{Without loss of generality, we may assume $\max(\varepsilon_0^i-1)<z$ or $\min\varepsilon_{2r}^i<z$. Otherwise, we have $\max(\varepsilon_0^i-1)>z$ and  $\min\varepsilon_{2r}^i>z$. Then we reverse the order of $P$, i.e. relabel the vertices $v_i$ by $v_{2r-i}$ for $0\le i\le 2r$ and edges $e_j$ by $e_{2r+1-j}$ for $1\le j\le 2r$, and denote the reversed path by $P'$. Hence $\varepsilon^i_0(P')=2r+1-\varepsilon^i_{2r}(P)$, $\varepsilon^i_{2r}(P')=2r+1-\varepsilon^i_{0}(P)$, and $[0,2r]=(\varepsilon^i_0(P')-1)\cup \varepsilon^i_{2r}(P')\cup\{z'\}$, where $z'=2r-z$. So, $$\max(\varepsilon_0^i(P')-1)=\max(2r-\varepsilon_{2r}^i(P))=2r-\min\varepsilon_{2r}^i(P)<2r-z=z'$$ and $$\min\varepsilon_{2r}^i(P')=\min(2r+1-\varepsilon_0^i(P))=2r-\max(\varepsilon_0^i(P))<2r-z=z'.$$
Therefore, we can reverse the order of $P$ instead if any.
}

\begin{claim}
$\min\varepsilon_{2r}^i<\max(\varepsilon_0^i-1)$.
\end{claim}
If not, then $\varepsilon_{2r}^i=[r+1, 2r]$ ($z\le r$) or $[r, 2r]\setminus\{z\}$ (if $z>r$).

If $\varepsilon_{2r}^i=[r+1, 2r]$, then $\kappa_0=[1,r]\setminus\{x\}$ for some $1\le x\le r$. Let $e_0=\{v_0\}\cup K_0$. Then $e_0\notin E(P)$.
Since $\min\varepsilon_{2r}^i=r+1>z$, we have $\max(\varepsilon_0^i-1)<z$, which implies $z=r$, $\varepsilon_0^i-1=[0,r-1]$ (or equivalently, $\varepsilon_0^i=[1,r]$).
We claim that for any $e_i$ with $i\in[1,r-1]$ or $[1,r]\setminus\{x+1\}$ (if $x<r$), $e_i\subseteq K(P)$. Otherwise, suppose there is $v\notin K(P)$ and $i\in[1,r-1]$ or $[1,r]\setminus\{x+1\}$ (if $x<r$) such that $v\in e_i$.
Let $P_i$ be  obtained from $P$ by replacing $e_i$ by $e_0$ if $i\not=x$ and let $P_i$ be  obtained from $P$ by replacing $e_{i+1}$ by $e_0$ and reversing the order of $e_1, \ldots, e_{i}$ and their corresponding key vertices if $i=x$. Note that $v_{i-1}, v_i\in e_0$ if $i\not=x$ and $v_0,v_{i+1}\in e_0$ if $i=x$ (since $x<r$). So $P_i$ is a longest Berge path in $H$ with $K(P_i)=K(P)$ (remark: the first element of $K(P_i)$ is $v_i$ for $i=x$). Since $i\in \varepsilon_0^i(P)$, $v_0\in e_i$ but $e_i\notin E(P_i)$. Thus $v_0\in K_v(P_i)$ if $i\not=x$ or $v_i\in K_v(P_i)$ if $i=x$. So $P_i$ is extendible by Observation~\ref{PROP: p0}, which is a contradiction.
Next we claim that for each $i\in[1,r-1]$ or $[1,r]\setminus\{x+1\}$, $e_i\setminus\{v_0\}\subseteq\{v_1,\ldots, v_r\}$.  If not, suppose there exists such an $i$ and a $j_i\in[r+1,2r]$ such that $v_{j_i}\in e_i$.
Since $i\in \varepsilon_0^i$, we have $v_0\in e_i$.
Since $j_i\in\varepsilon_{2r}^i$, we have  $C$  with $E(C)=\{e_i, e_{j_i+1},\ldots, e_{2r}, e_{j_i},\ldots, e_{i+1}, e_0, e_{i-1},\ldots, e_1\}$ and $K(C)=\{v_0, v_{j_i},\ldots,v_{2r}, v_{j_i-1}, \ldots, v_1\}$ is a Berge cycle of length $2r+1$, which is a contradiction to $n>2r(r-1)$ (by Lemma~\ref{LEM: cycle1}, we have $n=c(H)\le 2r+1$).
Let $A=[1,r-1]$ or $[1,r]\setminus\{x+1\}$. Then $|A\cup\{0\}|=r$ and for each $i\in A\cup\{0\}$, we have $e_i\setminus\{v_0\}\subseteq\{v_1, \ldots, v_r\}$.
Since $\{v_1, \ldots, v_r\}$ has exactly $r$ subsets of size $r-1$, we have $\{e_i : i\in A\cup\{0\}\}={\{v_1,\ldots, v_r\}\choose {r-1}}$. So replacing $\{e_1,\ldots, e_r\}$ by $\{e_i : i\in A\cup\{0\}\}$ with suitable order in $P$, we get another longest Berge path $Q$ with $K(Q)=K(P)$ in $H$. Clearly, $[r+1,2r]\subseteq\varepsilon_{2r}^i(Q)$. So, $\kappa_0(Q)\subseteq [1,r]$. Denote $\{y\}=[1,r]\setminus A$. Then $e_y\notin E(Q)$ and $v_0\in e_y$. This forces that $e_y=K_0(Q)\cup\{v_0\}\subseteq\{v_0, v_1,\ldots, v_r\}$. But this is a contradiction to $\{e_i : i\in A\cup\{0\}\}={\{v_1,\ldots, v_r\}\choose {r-1}}$.



So $\varepsilon_{2r}^i=[r, 2r]\setminus\{z\}$. Thus $\varepsilon_0^i-1=[0,r-1]$. This forces that $\kappa_{2r}\subseteq[r,2r-1]$, a contradiction to $\max(\varepsilon_0^i-1)>\min\kappa_{2r}$. The claim follows.

So $\varepsilon_{2r}^i$ (resp. $\varepsilon_0^i-1$) consists of at least two consecutive intervals.
Hence $|\varepsilon_{2r}^i\cup(\varepsilon_{2r}^i+1)|\ge r+2$ (resp. $|(\varepsilon_0^i-1)\cup(\varepsilon_0^i-2)|\ge r+2$). By (\ref{EQN: e5}) and (2.2) of Proposition~\ref{PROP: set}, $|\varepsilon^i_{2r}\cup(\varepsilon^i_{2r}+1)|=r+2$  and $\kappa_0\cup\varepsilon^i_{2r}\cup(\varepsilon^i_{2r}+1)=[1,2r+1]$. Denote $\min\varepsilon_{2r}^i=a$.
By Observation~\ref{OBS: o1}, $\varepsilon_{2r}^i$  consists of exactly two consecutive intervals. So we may assume $\varepsilon_{2r}^i=[a, a+p]\cup[r+p+2, 2r]$ for some integer $p\ge 0$. Thus $\kappa_0=[1,a-1]\cup[a+p+2, r+p+1]$.
 Let $e'=K_0\cup\{v_0\}$. Then $e'\notin E(P)$. Since $|e'|=|\kappa_0\cup\{0\}|=|[0,a-1]\cup[a+p+2,r+p+1]|=r\ge 3$, there exists $i\in\kappa_0$ such that $v_{i-1}, v_i\in e'$.
 Now let $P'$ be obtained from $P$ by replacing $e_i$ with $e'$. Then $P'$ is another longest Berge path with $K(P')=K(P)$ and the same $\varepsilon^i_{2r}$ with $P$. So $\kappa_{0}(P')=\kappa_0(P)$ too. This implies that $e_i=e'$, a contradiction.

\medskip

\noindent\textbf{Case 3.} $a_0\ge 2$.

Then $|\kappa_0|\ge r$.

If $a_t=0$ then $|\varepsilon^i_t|\ge r+1$. This is impossible, since, by (\ref{EQN: e5}), $|\kappa_0|+|\varepsilon^i_t|=|\kappa_0\cup \varepsilon^i_t|\le t\le 2r$.

If $a_t=1$ then  {$|\kappa_t|=r-1$ and $|\varepsilon^i_t|\ge r$. By (\ref{EQN: e5}), we have $\varepsilon_t^i\cap\kappa_0=\emptyset$. So  $|\kappa_0|=|\varepsilon^i_t|=r$, $\varepsilon_t^i\cup\{2r+1\}=[1,2r+1]\backslash\kappa_0$ and $t=2r$. By (2.2) of Proposition~\ref{PROP: set}, $(\varepsilon_t^i+1)\cup\{\min\varepsilon_t^i\}=[1,2r+1]\backslash\kappa_0$. By Observation~\ref{OBS: o1}, $\varepsilon^i_t=[r+1,2r]$ and so $\kappa_0=[1,r]$. By (\ref{EQN: e6}),  $\kappa_t=[r,2r-1]\backslash\{i_0\}$ for some $i_0\in[r,2r-1]$. So $E_t^O=\{e_0\}$, where $e_0=\{v_t\}\cup K_t(P)=\{v_r, \ldots, v_{2r}\}\backslash\{v_{i_0}\}$.

Now we claim that for any $i\in[r+1,2r]$, $e_i\subset\{v_r,...,v_{2r}\}$. Else, suppose $j\in[r+1,2r]$ is a counterexample, i.e, there exists $v\notin\{v_r,...,v_{2r}\}$ with $v\in e_j$. If $j\neq i_0+1$ and $v\notin K(P)$, then we get a longer Berge path $P'$ of length $2r+1$ with $E(P')=\{e_1,...,e_{j-1},e_0,e_{2r},...,e_{j+1},e_j\}$ and $K(P')=\{v_0,...,v_{j-1},v_{2r},..,v_{j},v\}$, a contradiction. If $j\neq i_0+1$ but $v\in K(P)$, assume $v=v_s$ with $s\in[0,r-1]$. Since $s+1\in\kappa_0=[1,r]$, there is an edge $e\in E(H)\setminus E(P)$ containing $v_0$ and $v_{s+1}$. Hence we get a Berge cycle $C$ of length $2r+1$ with $K(C)=\{v_0,v_{s+1},v_{s+2},...,v_{j-1},v_{2r},...,v_j,v_s,...,v_1\}$ and $E(C)=\{e, e_{s+2}, \ldots, e_{j-1}, e_0, e_{2r}, \ldots, e_j, e_s,\ldots, e_1\}$, which is a contradiction by Lemma~\ref{LEM: cycle1}.
Let $$A=\left\{e\in E(H): e=\{v_{2r}\}\cup X, \mbox{ where } X\in\binom{\{v_r,...,v_{2r-1}\}}{r-1}\right\}.$$ 
Note that $\varepsilon_{2r}^i=[r+1, 2r]$. So we have, for all $i\in\{0\}\cup[r+1,2r]\backslash\{i_0+1\}$, $e_i\backslash\{v_{2r}\}\in\binom{\{v_r,...,v_{2r-1}\}}{r-1}$.
Since $|\{0\}\cup[r+1,2r]\backslash\{i_0+1\}|=r=|\binom{\{v_r,...,v_{2r-1}\}}{r-1}|$, we get
$\{e_0\}\cup E_{2r}^I\backslash\{e_{i_0+1}\}=A$.
Note that $A$ induces an almost complete $r$-graph on $\{v_r, \ldots, v_{2r}\}$, i.e. $H[A]\cong K_{r+1}^{(r)}-e$, where $e=\{v_r,...,v_{2r-1}\}$. By the symmetry of $v_i$'s for $i\in[r,2r-1]$, it is easy to check that for any order of the vertices in $\{v_r, \ldots, v_{2r}\}$, there exists a Berge path of length $r$ corresponding to it.
Now suppose $j=i_0+1\in[r+1,2r]$. If $v\notin K(P)$, we have a Berge path $P'$ of length $2r+1$ with $K(P')=\{v_0,\ldots,v_{j-1},v_{2r},..,v_{j},v\}$, a contradiction.
Similarly, if $v=v_s\in K(P)$ for some $s\in[0,r-1]$, we get a Berge cycle $C$ of length $2r+1$ with $K(C)=\{v_0,v_{s+1},...,v_{j-1},v_{2r},...,v_j,v_s,...,v_1\}$, also a contradiction by Lemma~\ref{LEM: cycle1}.
Therefore, we have the claim.
But the claim implies $\{e_0\}\cup E_{2r}^I\subseteq A$, this is impossible since $|\{e_0\}\cup E_{2r}^I|=r+1>r=|A|$, a contradiction.}

Now assume $a_t\ge 2$. Then $|\kappa_t|\ge r$. By (\ref{EQN: e6}), $|\kappa_t|+|\kappa_0|\le t\le 2r$. This forces that $t=2r$ and $|\kappa_t|=|\kappa_0|=r$. But this is impossible by Corollary~\ref{COR: kk}.

The proof is completed.
\end{proof}

\section{An application}
To prove Theorem~\ref{THM: Main3}, we first give a lemma.
\begin{lem}\label{LEM: Cor-Lem}
Suppose $r\ge 2$ and $H$ is a connected $n$-vertex $r$-graph with $\ell(H)=t$.  If $\delta_1(H)>\binom{\lfloor\frac{t}{2}\rfloor}{r-1}+\lceil\frac{t}{2}\rceil$, then $n=t+1$ and $H$ contains a Berge Hamiltonian cycle.
\end{lem}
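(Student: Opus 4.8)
The plan is to reduce the whole statement to a single assertion: \emph{$H$ contains a Berge cycle of length $t+1$}. Indeed, such a cycle has length $\ell(H)+1\ge c(H)$, so by Lemma~\ref{LEM: cycle1} we would get $n=c(H)=t+1$ and the cycle would be Hamiltonian. So I would argue by contradiction, assuming $H$ has \emph{no} Berge Hamiltonian cycle, and fix a longest Berge path $P$ with $K(P)=\{v_0,\dots,v_t\}$ and $E(P)=\{e_1,\dots,e_t\}$. The first point is that, under this assumption, properties (1.1)--(1.3) of Proposition~\ref{PROP: set} are available for $P$ regardless of the exact size of $n$: if $n>t+1$ they hold by Proposition~\ref{PROP: set}(1), while if $n=t+1$ they hold by the Remark following Proposition~\ref{PROP: set} (this is exactly where the ``no Hamiltonian cycle'' hypothesis is used). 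This uniform access to (1.1)--(1.3) is what lets the two conclusions be proved together.

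Next I would run the endpoint-degree estimate. Write $k=\lfloor t/2\rfloor$, so $\lceil t/2\rceil=t-k$. From (1.1)--(1.2), $\kappa_0,\kappa_t\subseteq[1,t-1]$ and $(\kappa_0-1)\cap\kappa_t=\emptyset$, so $(\kappa_0-1)$ and $\kappa_t$ are disjoint subsets of $[0,t-1]$, giving $|\kappa_0|+|\kappa_t|\le t$. From (1.3), $(\varepsilon^i_0-1)\cap\kappa_t=\emptyset$ yields $|\varepsilon^i_0|\le t-|\kappa_t|$, and symmetrically $|\varepsilon^i_t|\le t-|\kappa_0|$. Assuming without loss of generality $|\kappa_0|\le|\kappa_t|$, we get $|\kappa_0|\le t/2$, hence $|\kappa_0|\le k$. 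Corollary~\ref{PROP: p1}(b) then gives
\[
\binom{k}{r-1}+(t-k)<d_H(v_0)\le\binom{|\kappa_0|}{r-1}+|\varepsilon^i_0|\le\binom{|\kappa_0|}{r-1}+t-|\kappa_0|.
\]
Setting $g(x)=\binom{x}{r-1}-x$, this rearranges to $g(k)<g(|\kappa_0|)$ with $0\le|\kappa_0|\le k$.

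To finish I would exploit convexity of $g$ (its first difference $g(x)-g(x-1)=\binom{x-1}{r-2}-1$ is non-decreasing), which gives $g(|\kappa_0|)\le\max\{g(0),g(k)\}=\max\{0,g(k)\}$ on $[0,k]$. When $r=2$ one has $g\equiv0$, and when $k\ge r$ one has $\binom{k}{r-1}\ge k$, i.e. $g(k)\ge0$; in either case $g(|\kappa_0|)\le g(k)$, contradicting $g(k)<g(|\kappa_0|)$. This settles every case with $t\ge 2r$. I expect the genuine obstacle to be the short-path range $t\le 2r-1$ (equivalently $k<r$), where $\binom{k}{r-1}\in\{0,1\}$, so $g(k)<0$ and the crude convexity bound only yields $g(|\kappa_0|)\le 0$, which is not enough when $|\kappa_0|$ is small. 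Here I would abandon the averaged estimate and analyze the few edges at $v_0$ directly: one checks that $|\kappa_0|=r-1=k$ already forces equality and a contradiction via $d_H(v_0)\le\binom{r-1}{r-1}+t-(r-1)=t-r+2$, so the only surviving scenario is $|\kappa_0|$ strictly small, which I would rule out by combining (1.2), (1.3), (1.5) and (1.6) to bound the number of path-edges through $v_0$ (and hence $|\varepsilon^i_0|$) much more tightly than the generic $t-|\kappa_t|$, again contradicting $\delta_1(H)>\binom{k}{r-1}+\lceil t/2\rceil$. Carrying out this degenerate-binomial bookkeeping cleanly, uniformly in $r$, is the step I expect to require the most care.
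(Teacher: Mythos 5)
Your proposal follows exactly the paper's route: argue by contradiction, invoke the Remark after Proposition~\ref{PROP: set} so that (1.1)--(1.3) are available both when $n>t+1$ and when $n=t+1$ but $H$ has no Hamiltonian Berge cycle, then combine $|\kappa_0|+|\kappa_t|\le t$, $|\varepsilon^i_0|\le t-|\kappa_t|$, and Corollary~\ref{PROP: p1}(b) with a convexity estimate. In the range $r=2$ or $\lfloor t/2\rfloor\ge r$ your write-up is complete and coincides in substance with the paper's proof.

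Where you diverge is instructive. You correctly observe that the convexity step only closes when $\binom{\lfloor t/2\rfloor}{r-1}\ge\lfloor t/2\rfloor$. The paper glosses over this: its claim $\max\{f(0),f(\lfloor t/2\rfloor)\}=\binom{\lfloor t/2\rfloor}{r-1}+\lceil t/2\rceil$ is false when $r\ge 3$ and $1\le\lfloor t/2\rfloor\le r-1$, since there $f(0)=t$ is the larger value. So the obstacle you flag is real, and it is a gap in the paper's own argument as well as yours; it is harmless for the paper's application, because in the proof of Theorem~\ref{THM: Main3} the lemma is only invoked with $\lfloor t/2\rfloor\ge k>r$. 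However, your plan for closing the range $t\le 2r-1$ is itself a genuine gap: beyond the single sub-case $|\kappa_0|=\lfloor t/2\rfloor=r-1$, you only gesture at ``combining (1.2), (1.3), (1.5) and (1.6)'' without an actual argument, and moreover (1.5)--(1.6) are proved in Proposition~\ref{PROP: set} only under the hypothesis $n>t+1$; the Remark extends only (1.1)--(1.3) to the case $n=t+1$ with no Hamiltonian Berge cycle, so in that case the tools you name are not available. As it stands, then, both your proposal and the paper establish the lemma only for $r=2$ or $\lfloor t/2\rfloor\ge r$ (plus the trivial $t\le 1$); if you add that restriction to the statement, your proof is correct and is essentially the paper's.
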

\begin{proof}
Suppose to the contrary that $n>t+1$ or $n=t+1$ but $H$ does not contain a Berge Hamiltonian cycle.
Let $P$ be a longest Berge path with $E(P)=\{e_1,\ldots, e_t\}$ and $K(P)=\{v_0, v_1, \ldots, v_t\}$.
By the Remark of Proposition~\ref{PROP: set}, (1.1), (1.2) and (1.3) hold.
Now by (1.1) and (1.2) of Proposition~\ref{PROP: set}, we have $(\kappa_0-1)\cap\kappa_t=\emptyset$ and $(\kappa_0-1)\cup\kappa_t\subseteq[0,t-1]$, which implies that $|K_0|+|K_t|\le t$. Similarly by (1.3) of Proposition~\ref{PROP: set}, we have $|E_0^I|\le t-|K_t|$.
Without loss of generality, suppose $|K_0|\le |K_t|$. Then $|K_0|\le\lfloor\frac{t}{2}\rfloor$. By (b) of Corollary~\ref{PROP: p1},
$$d_H(v_0)\le\binom{|K_0|}{r-1}+|E_0^I|\le\binom{|K_0|}{r-1}+t-|K_t|\le\binom{|K_0|}{r-1}+t-|K_0|\mbox{.}$$
By the convexity of the function $f(x)=\binom{x}{r-1}+t-x$ for $x\in\left[0,\lfloor\frac{t}{2}\rfloor\right]$, we have
$$d_H(v_0)\le \max\left\{f(0),f\left(\left\lfloor\frac{t}{2}\right\rfloor\right)\right\}=\binom{\left\lfloor\frac{t}{2}\right\rfloor}{r-1}+\left\lceil\frac{t}{2}\right\rceil<\delta_1(H)\mbox{,}$$
a contradiction.
\end{proof}

Now, it is ready to give the proof of Thoerem~\ref{THM: Main3}.

\begin{proof}[Proof of Theorem~\ref{THM: Main3}]
First, we claim that $H$ is connected. If not, then there are two distinct vertices $u_1,u_2\in V(H)$ such that there is no Berge path connecting them.
For any $v\in V(H)$, let $$A_v:=\{u\in V(H)\backslash\{v\}: d_2(\{u,v\})\ge 1\}.$$
 Then $d_H(v)\le\binom{|A_v|}{r-1}$ for any $v\in V(H)$.
Clearly, $u_1\notin A_{u_2}$ and $u_2\notin A_{u_1}$ and $A_{u_1}\cap A_{u_2}=\emptyset$.
Therefore, $|A_{u_1}|+|A_{u_2}|\le|V(H)\setminus\{u_1,u_2\}|=n-2$. Without loss of generality, assume $|A_{u_1}|\le|A_{u_2}|$. Then $|A_{u_1}|\le\lfloor\frac{n-2}{2}\rfloor$. So $d_H(u_1)\le\binom{\lfloor\frac{n-2}{2}\rfloor}{r-1}<\delta_1(H)$, a contradiction.

If $n\ge 2r+4$ is even, then there exists an integer $k>r$ so that $n=2k+2>2k+1$. Since $\delta_1(H)>{k\choose r-1}$, by Theorem~\ref{THM: Main1}, we get $\ell(H)\ge 2k+1$. Since a Berge path $P$ of length $2k+1$ contains $2k+2=n$ key vertices, $P$ is a longest Berge path, i.e. $\ell(H)=2k+1$. Since $\delta_1(H)>{k\choose r-1}+k+1$, by Lemma~\ref{LEM: Cor-Lem}, $H$ contains a Berge Hamiltonian cycle.

If $n\ge 2r+5$ is odd, then there exists an integer $k>r$ such that $n=2k+3>2k+1$. Similarly, since $\delta_1(H)>{k+1\choose r-1}+k+1>{k\choose r-1}+k+1$ and by Theorem~\ref{THM: Main1}, we have $\ell(H)\ge 2k+1$. If $\ell(H)=2k+1$, by Lemma~\ref{LEM: Cor-Lem}, we have $n=2k+1+1=n-1$, a contradiction. Thus $\ell(H)=2k+2=n-1$. By Lemma~\ref{LEM: Cor-Lem}, $H$ contains a Berge Hamiltonian cycle.
\end{proof}

\section{Remarks}
In this paper, we give the minimum degree threshold for $r$-uniform hypergraphs on $n$ vertices containing no Berge path of length $2k+1$. Furthermore, for $k>r\ge 4$ and $n>2k+1$, we characterize the extremal graphs.  However, we know nothing about $k<r$, we leave this as a problem.

Recently, F\"uredi, Kostochka, and Luo~\cite{FKL20} gave minimum degree thresholds for non-uniform hypergraphs containing no long Berge paths and cycles. But as we have known there is no Dirac-type minimum degree condition for uniform hypergraphs containing no long cycles so far. It is also an interesting problem to determine the  minimum degree condition for uniform hypergraphs containing no long cycles.


\begin{thebibliography}{99}
\bibitem{BGJS08}
 P. N. Balister, E. Gy\'ori, J. Lehel, R. H. Schelp, Connected graphs without long paths, Discrete Math., 308(19) (2008) 4487--4494.

\bibitem{BGHS76}
J.-C. Bermond, A. Germa, M.-C. Heydemann, D. Sotteau, Hypergraphes Hamiltoniens, in
Probl\'emes combinatoires et th\'eorie des graphes (Colloq. Internat. CNRS, Univ. Orsay, Orsay,
1976). Colloq. Internat. CNRS, vol. 260 (CNRS, Paris, 1978), pp. 39--43

\bibitem{CEP16}
D. Clemens, J. Ehrenm\"uller, and Y. Person, A Dirac-type theorem for Hamilton Berge cycles in random hypergraphs, Electron. Notes Discrete Math., 54 (2016), pp. 181--186.

\bibitem{CP20}
M. Coulson, G. Perarnau: A Rainbow Dirac's Theorem, SIAM J. Discrete Math. Vol. 34 (2020), No. 3, pp. 1670--1692.


\bibitem{DGMT18}
A. Davoodi, E. Gy\H{o}ri, A. Methuku, C. Tompkins, An Erd\H{o}s-Gallai type theorem for uniform
hypergraphs, European J. Combin. 69 (2018) 159--162.

\bibitem{Dirac52}
G. A. Dirac, Some theorems on abstract graphs, Proc. London Math. Soc. (3) 2 (1952) 69--81.




\bibitem{EG59}
P. Erd\H{o}s, T. Gallai, On maximal paths and circuits of graphs, Acta Math. Acad. Sci. Hungar. 10 (1959) 337--356.

\bibitem{EGMSTZ18}
B. Ergemlidze, E. Gyori, A. Methuku, N. Salia, C. Tompkins and O. Zamora. Avoiding long Berge cycles, the missing cases $k=r+1$ and $k=r+2$, (2018) arXiv:1808.07687

\bibitem{FKL19-1}
Z. F\"uredi, A. Kostochka, R. Luo: Avoiding long Berge cycles. J. Comb. Theory, Ser. B 137 (2019): 55--64

\bibitem{FKL18}
 Z. F\"uredi, A. Kostochka, and R. Luo, Avoiding long Berge cycles II, exact bounds for all $n$, (2018), arXiv:1807.06119.

\bibitem{FKL19-2}
Z. F\"uredi, A. Kostochka, R. Luo: On 2-connected hypergraphs with no long cycles. Electr. J. Comb. 26(4) (2019): P4.31.


\bibitem{FKL20}
Z. F\"uredi, A. Kostochka, R. Luo: Berge cycles in non-uniform hypergraphs. Electr. J. Comb. 27(3) (2020), P3.9.



\bibitem{GKL16}
E. Gy\H{o}ri, G. Y. Katona, N. Lemons, Hypergraph extensions of the Erd\H{o}s-Gallai theorem, European J. Combin. 58
(2016) 238--246.

\bibitem{GMSTV18}
E. Gy\"ori, A. Methuku, N. Salia, C. Tompkins, M. Vizer: On the maximum size of connected hypergraphs without a path of given length. Discrete Math. 341(9) (2018): 2602--2605.

\bibitem{Kee11}
P. Keevash, Hypergraph Tur\'an problems, in: Surveys in Combinatorics 2011, in: London Math. Soc. Lecture Note Ser., vol.392, Cambridge Univ. Press, Cambridge, 2011, pp.83--139.

\bibitem{Kop77}
 G. N. Kopylov, Maximal paths and cycles in a graph, Dokl. Akad. Nauk SSSR, 234(1) (1977) 19--21.

\bibitem{Luo18}
 R. Luo, The maximum number of cliques in graphs without long cycles, J. Combin. Theory Ser.B, 128(2018) 219--226.


\bibitem{RR10}
V. R\"{o}dl, A. Ruci\'nski, Dirac-type questions for hypergraphs a survey (or more problems for
endre to solve). An Irregular Mind. Bolyai Soc. Math. Stud. 21, 561--590 (2010).

\bibitem{Zh16}
Y. Zhao, Recent advances on Dirac-type problems for hypergraphs, A. Beveridge et al. (eds.), Recent Trends in Combinatorics, The IMA Volumes
in Mathematics and its Applications 159, 145--165 (2016).


%

%

%

 %



\end{thebibliography}
\end{document}